\theoremstyle{plain}
\newtheorem{theorem}{Theorem}[section]
\newtheorem*{theorem*}{Theorem}
\newtheorem{proposition}[theorem]{Proposition}
\newtheorem{lemma}[theorem]{Lemma}
\theoremstyle{definition}
\newtheorem{definition}[theorem]{Definition}
\newtheorem{remark}[theorem]{Remark}
\newtheorem{example}[theorem]{Example}
\newcommand{\enm}[1]{\ensuremath{#1}}          %
\newcommand{\op}[1]{\operatorname{#1}}
\newcommand{\cal}[1]{\mathcal{#1}}
\newcommand{\NN}{\enm{\mathbb{N}}}
\newcommand{\ZZ}{\enm{\mathbb{Z}}}
\renewcommand{\AA}{\enm{\mathbb{A}}}
\newcommand{\PP}{\enm{\mathbb{P}}}
\newcommand{\Aa}{\enm{\cal{A}}}
\newcommand{\Ee}{\enm{\cal{E}}}
\newcommand{\Ff}{\enm{\cal{F}}}
\newcommand{\Ii}{\enm{\cal{I}}}
\newcommand{\Ll}{\enm{\cal{L}}}
\newcommand{\Nn}{\enm{\cal{N}}}
\newcommand{\Oo}{\enm{\cal{O}}}
\newcommand{\Rr}{\enm{\cal{R}}}
\renewcommand{\phi}{\varphi}
\renewcommand{\theta}{\vartheta}
\renewcommand{\epsilon}{\varepsilon}
\newcommand{\Pic}{\op{Pic}}
\newcommand{\Ext}{\op{Ext}}
\newcommand{\Sec}{\op{Sec}}
\renewcommand{\to}[1][]{\xrightarrow{\ #1\ }}
\newcommand{\old}[1]{}
\begin{document}
\title[Globally generated vector bundles]{Globally generated vector bundles on complete intersection Calabi-Yau threefolds}
\author{E. Ballico, S. Huh and F. Malaspina}
\address{Universit\`a di Trento, 38123 Povo (TN), Italy}
\email{edoardo.ballico@unitn.it}
\address{Sungkyunkwan University, Suwon 440-746, Korea}
\email{sukmoonh@skku.edu}
\address{Politecnico di Torino, Corso Duca degli Abruzzi 24, 10129 Torino, Italy}
\email{francesco.malaspina@polito.it}
\keywords{Complete intersection Calabi-Yau, Vector bundles, Globally generated, Curves in projective spaces}
\thanks{The first and third authors are partially supported by MIUR and GNSAGA of INDAM (Italy). The second author is supported by Basic Science Research Program 2010-0009195 through NRF funded by MEST. The third author is supported by the framework of PRIN 2010/11 \lq Geometria delle variet\`a algebriche\rq, cofinanced by MIUR}
\subjclass[2010]{14J60; 14J32; 14H25}

\begin{abstract}
We investigate the globally generated vector bundles on complete intersection Calabi-Yau threefolds with the first Chern class at most $2$. We classify all the globally generated vector bundles of an arbitrary rank on quintic in $\PP^4$ and investigate the globally generated vector bundles of rank $2$ on complete intersection Calabi-Yau threefolds of codimension $2$.
\end{abstract}

\maketitle

\section{Introduction}
Globally generated vector bundles on projective varieties play an important role in classical algebraic geometry. In fact every globally generated bundles $\Ee$ of rank $k$ on an algebraic variety $X$ and every $(N+1)$-dimensional linear subspace $V\subset H^0(E)$ such that $V\otimes \Oo_X\to\Ee$ is an epimorphism, determine a regular morphism
$$\phi_V: X \rightarrow \mathbb{G}(k, N+1)$$
to the Grassmannian variety parametrizing $k$-dimensional subspaces in the dual space of $V$. Conversely every such a regular morphism corresponds to a globally generated vector bundle $\Ee$ of rank $k$ on $X$ and an $(N+1)$-dimensional linear subspace $V\subset H^0(\Ee)$ (see \cite[Section $3$]{sierra}).

If globally generated vector bundles are nontrivial, then they must have strictly positive first Chern class. The classification of globally generated vector bundles with low first Chern class has been done over several rational varieties such as projective spaces \cite{am,SU} and quadric hypersurfaces \cite{BHM}.

In this paper we examine the similar problem for the complete intersection Calabi-Yau (CICY for short) threefold. There are exactly five types of such threefolds: the quintic in $\PP^4$, the complete intersections $(3,3)$ and $(2,4)$ in $\PP^5$, the complete intersection $(2,2,3)$ in $\PP^6$ and finally $(2,2,2,2)$ in $\PP^7$. Calabi-Yau threefolds in general have been objects of extended interest in algebraic geometry, mainly because of its relations with the theory of mirror symmetry. Especially showing the existence and counting the number of projective curves in Calabi-Yau threefolds have been the main problems in many works, for example see \cite{ES,kanazawa,Knutsen}.

The Hartshorne-Serre correspondence states that the construction of vector bundles of rank $r\ge 2$ on a smooth variety $X$ with dimension $ 3$ is closely related with the structure of curves in $X$ and it inspires the classification of vector bundles on Calabi-Yau threefolds. There have been several works on the classification of {\it arithmetically Cohen-Macaulay} (ACM) bundles on Calabi-Yau threefolds, e.g. \cite{madonna} and so it is sufficiently timely to classify the globally generated vector bundles on Calabi-Yau threefolds. On the other hand, several works on mirror symmetry for Calabi-Yau complete intersections in Grassmannians, e.g. \cite{BCKS, HK}, keep giving us inspiration during this work, mainly because such bundles may enable us to consider CICY as subvarieties of Grassmannians.

Our first main result is the following:

\begin{theorem}\label{theorem1.1}
Let $\Ee$ be a globally generated vector bundle of rank $r\ge 2$ without trivial factors on a smooth quintic hypersurface $X=X_5$ in $\PP^4$.
If $c_1(\Ee)\le 2$, then one of the following holds:\\
$($$\pi_P: X \to \PP^3$ is a linear projection from a point $P\in \PP^4\setminus X$.$)$
\begin{enumerate}
\item $\Ee \cong T\PP^4(-1)_{\vert_{X}}$ or $\Ee\cong\pi_P^* (T_{\PP^3}(-1))$.
\item $\Ee\cong\Oo_{X}(1)\oplus\Oo_X(1)$, or $\Ee\cong\pi_P^* (\Nn_{\PP^3}(1))$ where $\Nn_{\PP^3}$ is a null correlation bundle on $\PP^3$.
\item $\Ee\cong\pi_P^* (\Omega_{\PP^3}(2))$.
\item $0 \to \Oo_X(-2) \to \Oo_X^{\oplus (r+1)} \to \Ee \to 0$, with $3\le r\le 14$.
\item $0\to \Oo_X(-1)^{\oplus 2} \to \Oo_X^{\oplus (r+2)} \to \Ee \to 0$, with $3\le r\le 8$.
\item $0\to \Oo_X(-1) \to \Oo_X^{\oplus r} \oplus \Oo_X(1) \to \Ee \to 0$, with $3\le r\le 5$.
\end{enumerate}
\end{theorem}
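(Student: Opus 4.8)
The plan is to reduce the classification to a small list of numerical possibilities for the Chern classes $(c_1,c_2,c_3)$ of $\Ee$ and then realize each one geometrically. First I would twist: since $\Ee$ is globally generated with no trivial summand, a general section vanishes in codimension at least one, and after subtracting line bundle summands we may assume $\Ee$ has no summands at all. The two relevant cases are $c_1(\Ee)=1$ and $c_1(\Ee)=2$. For $c_1=1$, a standard argument (Serre vanishing/generic section) shows that the zero locus of a general section of $\Ee$, if nonempty, would be a curve of very small degree, and one quickly forces $\Ee$ to be an extension of the sort $0\to\Oo_X\to\Ee\to\Ii_C(1)\to 0$; bounding $\deg C$ and $p_a(C)$ via the adjunction formula on $X$ (using $K_X=\Oo_X$) and the global generation of $\Ii_C(1)$ pins down $C$ to be empty or a line, which returns the restriction of $T\PP^4(-1)$ and its projection $\pi_P^*(T_{\PP^3}(-1))$ in case (1). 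The $c_1=1$ case with a nowhere vanishing section gives $\Oo_X\subset\Ee$, contradiction, so (1) is the entire $c_1=1$ list.

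The bulk of the work is $c_1(\Ee)=2$. Here I would again look at the zero scheme $Z$ of a general section of $\Ee$, which by global generation is smooth of codimension equal to $\min(r,3)$. If $r\ge 3$ then $Z=\emptyset$ and we get a presentation $0\to\Oo_X(-2)\oplus(\text{line bundles})\to\Oo_X^{\oplus N}\to\Ee\to 0$ from a Koszul-type resolution of the trivial section together with the structure of globally generated bundles (cf. \cite{sierra}, \cite{SU}); chasing the possible negative twists that can appear — only $\Oo_X(-2)$, two copies of $\Oo_X(-1)$, or one $\Oo_X(-1)$ balanced by one $\Oo_X(1)$ on the middle term, because $c_1$ is only $2$ — produces exactly the three exact sequences in (4), (5), (6). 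The rank bounds $3\le r\le 14$, $3\le r\le 8$, $3\le r\le 5$ come from computing $c_3(\Ee)$ (or $h^0$) from the sequence and imposing that $\Ee$ actually be a bundle, i.e. that the map of the resolution be everywhere injective as a bundle map — equivalently an inequality forcing the degeneracy locus to be empty on the threefold $X$. If instead $r=2$, then $Z$ is a smooth curve $C\subset X$ with $\omega_C\cong\Oo_C(c_1-4)=\Oo_C(-2)$ by adjunction on the Calabi-Yau $X$, so $C$ is a disjoint union of smooth rational curves; global generation of $\Ii_C(2)$ and a degree count (Riemann-Roch, the fact that $C$ is cut by quadrics, and $h^0(\Oo_X(2))=15$) limit $\deg C$ to a short list, and each value reconstructs one of the rank-$2$ bundles: the projections $\pi_P^*(\Omega_{\PP^3}(2))$, $\pi_P^*(\Nn_{\PP^3}(1))$, and the split case $\Oo_X(1)^{\oplus 2}$, matching (2) and (3).

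The key technical device throughout is the Hartshorne–Serre correspondence on the threefold $X$ together with adjunction $\omega_C=\omega_X\otimes\det\Ee|_C=\det\Ee|_C$, which makes the Calabi-Yau condition do real work: it forces the zero-locus curves to be arithmetically very constrained (subcanonical of negative degree, hence unions of lines or low-degree rational curves). To pass from the abstract numerical data to the explicit bundles in (1)–(3), I would use that a globally generated bundle whose zero locus is a line (resp. conic, resp. a specific low-degree curve) on $X$ is the restriction of the analogous bundle on $\PP^4$ or the pullback under $\pi_P$ of the analogous bundle on $\PP^3$; verifying that these pullbacks are indeed globally generated with the right Chern classes, and that no other curve class produces a bundle, is where the geometry of the linear projection $\pi_P$ enters.

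The step I expect to be the main obstacle is the $r=2$ analysis: showing that the list of admissible curves $C$ is exactly the one producing (2) and (3), and in particular ruling out seemingly-possible disjoint unions of several lines or low-degree rational curves. This requires a careful lattice/degree argument using that $C$ lies on a quintic and is cut out by quadrics, combined with the constraint that the resulting extension class in $\op{Ext}^1(\Ii_C(2),\Oo_X)$ must yield a \emph{locally free} sheaf — equivalently, that $C$ be locally a complete intersection with the correct normal bundle — which eliminates the spurious cases. The rank bounds in (4)–(6) are comparatively routine once the resolutions are in hand, reducing to positivity of a Chern-class expression, but they still need to be checked against the requirement that the cokernel be a genuine vector bundle rather than merely a sheaf.
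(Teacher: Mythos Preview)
Your proposal contains a fundamental computational error that derails the rank-$2$ analysis. You write that adjunction on the Calabi--Yau $X$ gives $\omega_C\cong\Oo_C(c_1-4)=\Oo_C(-2)$, but this is the formula for $\PP^3$, not for $X_5$. You even state the correct identity $\omega_C=\omega_X\otimes\det\Ee|_C=\det\Ee|_C$ one line earlier; since $\omega_X\cong\Oo_X$, this yields $\omega_C\cong\Oo_C(c_1)=\Oo_C(2)$. So the zero curves are \emph{not} unions of rational curves: each connected component $C_i$ satisfies $d_i=g_i-1$, and the relevant curves are things like plane quintics (degree $5$, genus $6$), not lines or conics. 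The paper's core argument for Proposition~\ref{X5_2} is an extended case analysis of the base locus $B_1\cap B_2\cap B_3\subset\PP^4$ of three general quadrics through $C$, ruling out all possible surface components (cubic scrolls, cones, unions of planes, etc.) except the configuration of two planes meeting at a point off $X$, which gives $c_2=10$ and the bundle $\pi_P^*(\Nn_{\PP^3}(1))$. Your ``union of low-degree rational curves'' picture never touches this geometry.

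The higher-rank part is also off. For $r\ge 3$ the paper does not use the zero locus of a single section; it takes $r-1$ general sections to get $0\to\Oo_X^{\oplus(r-1)}\to\Ee\to\Ii_C(2)\to 0$ with $C$ a curve satisfying $\omega_C(-2)$ globally generated, and then runs a case analysis parallel to the rank-$2$ one. The resolutions in (4)--(6) come from the minimal free resolution of $\Ii_C$ for the specific curves that survive (complete intersection of two quadrics on $X$; a plane quintic viewed as $X\cap\PP^2$; etc.), not from a generic ``Koszul-type'' argument. The rank bounds $14$, $8$, $5$ are obtained by computing $h^1(\Ff^\vee)$ for the rank-$3$ bundle $\Ff$ and bounding the dimension of the extension space, not from a Chern-class positivity inequality. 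Until you fix the adjunction and identify the actual curves involved, the outline cannot be completed.
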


In particular we have $c_2(\Ee) \in \{0,5,10,15, 20\}$ and all are the multiples of $5=\deg (X)$, less than or equal to $c_1(\Ee)^2\times \deg (X)=20$.

The second main result is on CICY threefolds of codimension $2$.
\begin{theorem}\label{theorem1.2}
Let $\Ee$ be a globally generated vector bundle of rank $2$ on a CICY threefold $X$ of codimension $2$. If $c_1(\Ee)\le 2$, then the possible $c_2(\Ee)$ is as follows:
\begin{enumerate}
\item On $X=X_{2,4}$, we have $c_2(\Ee)\in \{ 0,4, 8, 11, 16\}$.
\item On $X=X_{3,3}$, we have $c_2(\Ee) \in \{0,9,12, 15, 16,18\}$.
\end{enumerate}
And for each Chern classes there exist corresponding globally generated vector bundles on $X$.
\end{theorem}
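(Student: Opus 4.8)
The plan is to treat the two codimension-2 CICY threefolds $X_{2,4}$ and $X_{3,3}$ in parallel with the quintic case, exploiting the same numerical and cohomological machinery. First I would record the basic invariants: $X$ sits in $\PP^5$ as a complete intersection of two hypersurfaces of the given degrees, so $\omega_X \cong \Oo_X$, $H^i(\Oo_X(t)) = 0$ for $0 < i < 3$ and all $t$ (by the complete intersection resolution and Serre duality), and $\deg(X) = 8$ for $X_{2,4}$, $\deg(X) = 9$ for $X_{3,3}$. For a globally generated rank-$2$ bundle $\Ee$ with $c_1 := c_1(\Ee) \in \{1,2\}$, global generation forces $c_1 \ge 0$ with equality only for the trivial bundle, and the Chern classes of a globally generated bundle on a threefold satisfy $0 \le c_2 \le c_1^2 \cdot \deg(X)$ (intersecting with a general hyperplane section reduces $c_2$ to the degree of the zero locus of a general section of $\Ee|_S$ on the surface $S$, which is non-negative, while the upper bound comes from $c_2 \le c_1^2$ in the Chow ring modulo the positivity of $\Ee(-?)$ — more precisely one uses that $\Ee(-1)$ or a twist is globally generated after restriction). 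This already pins the range of possible $c_2$.

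Next I would run the case analysis on $c_1$. When $c_1 = 1$: a general section of $\Ee$ vanishes on a curve $C \subset X$ with $\omega_C \cong \Oo_C(c_1 - 4)|_C$ by Hartshorne--Serre adjunction, i.e. $\omega_C \cong \Oo_C(-3)$, so $C$ has negative degree-normalized genus and in fact $\deg C = c_2$, $2p_a(C) - 2 = -3\deg C$, forcing $\deg C \le 1$; the curve is either empty (so $\Ee \cong \Oo_X \oplus \Oo_X(1)$, $c_2 = 0$) or a line, and one must check whether a line in $X$ admits the required section — lines on a general CICY threefold of codimension $2$ do exist (the Fano scheme of lines is nonempty), giving $c_2 = 1$? — but wait, the statement lists only $c_2 \in \{0,4,8,11,16\}$ for $X_{2,4}$, so $c_1 = 1$ must in fact yield only $c_2 = 0$; the point is that a rank-$2$ globally generated bundle with $c_1 = 1$ fitting into $0 \to \Oo_X \to \Ee \to \Ii_C(1) \to 0$ needs $\Ii_C(1)$ globally generated, which forces $C = \emptyset$ since $C$ lies on no hyperplane unless $\deg C \le $ something — I would verify global generation of $\Ii_C(1)$ fails for any nonempty $C$, closing this case with $c_2 = 0$ only. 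When $c_1 = 2$: the zero locus $C$ of a general section satisfies $\omega_C \cong \Oo_C(-2)$, $\deg C = c_2$, and $2p_a(C) - 2 = -2\deg C$, so $p_a(C) = 1 - \deg C$; combined with $C \subset \PP^5$ and the Castelnuovo-type bounds, and with the constraint that $\Ii_{C/X}(2)$ be globally generated, one gets a finite list of numerical types for $C$. I expect these to correspond to: $C = \emptyset$ ($c_2 = 0$); $C$ a conic or a disjoint union / a low-degree elliptic or rational curve; and the extreme case where $\Ee(-1)$ or the Serre dual construction degenerates, giving the largest $c_2 = c_1^2\deg(X) = 16$ resp. $18$. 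The intermediate values ($4, 8, 11$ and $9, 12, 15, 16$) should come from specific extremal curves: plane or twisted cubics, elliptic quartics, elliptic quintics, canonical-type curves, etc., each checked against the hyperplane-section surface.

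For the existence half of the statement I would exhibit, for each admissible $c_2$, an explicit $\Ee$: the split bundles and restrictions of homogeneous bundles from $\PP^5$ (e.g. $\Oo_X(1)^{\oplus 2}$, $T\PP^5(-1)|_X$ suitably truncated to rank $2$ via a general section, spinor-type or null-correlation pullbacks when available) handle the small values, and for the larger $c_2$ I would produce $\Ee$ as a Serre-correspondence extension $0 \to \Oo_X \to \Ee \to \Ii_{C/X}(c_1) \to 0$ where $C$ is an explicit curve on $X$ of the required degree and arithmetic genus that is cut out (as a subscheme of $X$) by global sections of $\Ii_{C/X}(c_1)$ — the existence of such curves on $X_{2,4}$ and $X_{3,3}$ being known from the literature on curves in CICY threefolds (the references \cite{Knutsen, ES, kanazawa} cited in the introduction), and the extension class existing because $H^1(\Ii_{C/X}(c_1 - 4)) = H^1(\omega_C)^\vee \ne 0$ exactly when $\omega_C$ has a section, which holds in the relevant genus range. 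The main obstacle will be the $c_1 = 2$ classification: ruling out the ``missing'' values of $c_2$ between the listed ones requires showing that no curve $C$ with the corresponding degree and genus $p_a(C) = 1 - \deg C$ (which is negative, hence $C$ disconnected, forcing a careful analysis of the connected components and of when $\Ii_{C/X}(2)$ is globally generated) can occur — this is where the bulk of the casework lies, and where the specific geometry of $X_{2,4}$ versus $X_{3,3}$ produces the two different lists. I would organize this as a sequence of lemmas, one per candidate degree, using the restriction to a general $K3$ surface section $S = X \cap H$ (where $\Ee|_S$ is a globally generated bundle on a $K3$ with $c_1(\Ee|_S)^2 = c_1^2 \cdot \deg(X)$ and $c_2(\Ee|_S) = c_2(\Ee)$) and invoking known results on globally generated bundles on $K3$ surfaces to eliminate the forbidden Chern classes.
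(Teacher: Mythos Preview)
Your adjunction computation is wrong, and this derails the entire numerical setup. On a Calabi--Yau threefold $X$ one has $\omega_X \cong \Oo_X$, so if $C$ is the zero locus of a general section of a rank-$2$ bundle $\Ee$ with $c_1(\Ee)=c_1$, then $\omega_C \cong \omega_X|_C \otimes \det N_{C/X} \cong \det(\Ee)|_C \cong \Oo_C(c_1)$, \emph{not} $\Oo_C(c_1-4)$. Thus for $c_1=1$ each connected component $C_i$ satisfies $2g_i-2=d_i$ (so $g_i=d_i/2+1\ge 3$), and for $c_1=2$ one has $g_i=d_i+1$. The curves in play have \emph{large} genus, not negative arithmetic genus; in particular your conclusion ``$p_a(C)=1-\deg C<0$ forces $C$ disconnected'' is backwards, and the claim that $c_1=1$ yields only $c_2=0$ is false on $X_{2,4}$ (a plane quartic gives $c_2=4$). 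Your upper bound $c_2\le c_1^2\deg(X)$ also comes out wrong: for $c_1=2$ on $X_{2,4}$ this would be $32$, not $16$; the actual bound $\deg(C)\le 4\deg(X)-3$ follows from a linkage argument (Lemma~2.8 in the paper), not from positivity of a twist.

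Once the adjunction is corrected, the paper's strategy is quite different from your K3-restriction idea. The paper works directly in the ambient $\PP^5$: since $\Ii_{C,X}(2)$ is spanned and the equations of $X$ have degree at most $4$ (resp.\ $3$), $C$ is cut out in $\PP^5$ by quadrics together with the equations of $X$. One then studies the scheme-theoretic base locus $\Psi$ of $|\Ii_{C,\PP^5}(2)|$ and the irreducible components $S_i$ of $\Psi_{\mathrm{red}}$ containing the components of $C$, performing a case analysis on $\dim S_i$ and $\deg S_i$ (Lemmas~5.2--5.8 for $X_{2,4}$, Lemmas~6.2--6.9 for $X_{3,3}$). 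The existence half is handled by explicit constructions: disjoint unions of codimension-$2$ linear sections (Example~\ref{b1}), complete intersections of four quadrics (Example~\ref{b2}), del Pezzo sections (Example~\ref{b3}), and a linkage construction (Example~\ref{inc}), each shown to lie on some $X$ of the given type. Your proposal to invoke ``known results on globally generated bundles on K3 surfaces'' and the references on curves in CICYs would not furnish either the exclusions or the constructions needed here.
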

Indeed we describe completely the corresponding curves in $X$ by the Hartshorne-Serre construction, except the case of $c_2(\Ee)=16$, where we show the existence but we did not succeed in the full classification of vector bundles.

The paper is divided into four parts in a natural way:

In the first part, Section $2$, we collect some necessary technical information about CICY threefolds, the Hartshorne-Serre construction and the Castelnuovo's bound for curves in projective spaces. Then we propose a general result on the bound of degrees of the associated curves to globally generated vector bundles.

The second part, mainly Section $3$, is devoted to the proof of Theorem \ref{theorem1.1}. We first classify the globally generated vector bundle on $X_5$ with $c_1(\Ee)=1$ and observe that every such bundles are obtained as pullbacks of bundles on $\PP^3$ by a linear projection or restriction of bundles on $\PP^4$. We investigate the globally generated vector bundles on $X_5$ with $c_1(\Ee)=2$ in terms of the locally free resolution and observe that they have the same list in \cite[Theorem1.1]{SU}.

The third part consisting of Sections $4$-$6$, is devoted to the proof of Theorem \ref{theorem1.2}. In Section $4$ we classify globally generated vector bundle on CICY threefolds $X$ with $c_1(\Ee)=1$, or $c_1(\Ee)=2$ and $h^0(\Ee(-1))>0$. Then we suggest four main types of vector bundles as examples and check the existence. In Section $5$ and $6$ are separately devoted to proving that these four types of vector bundles are only possibilities of globally generated vector bundles on $X$ with $c_1(\Ee)=2$ and $h^0(\Ee(-1))=0$. We investigate the scheme-theoretic base locus of the quadric hypersurfaces of $\PP^5$ containing the associated curve $C$ to a vector bundle and check the existence one by one with respect to dimension and the degree of the irreducible components of the reduced scheme of the base locus.

In the last part, mainly Section $7$, we construct an example of globally generated vector bundles of rank $2$ on CICY threefolds of codimension $3$ with $c_1(\Ee)=2$, mainly using the techniques in the previous sections. We do not know other examples, while our tools are not enough to say that they are the only possibilities.

We would like to thank Carlo Madonna for introducing the problem to us.


\section{Preliminary}
\begin{definition}
A smooth $3$-dimensional projective variety $X$ is called a {\it Calabi-Yau} threefold if its canonical class is trivial. In particular, if a complete intersection $X=X_{r_1, \ldots, r_k} \subset \PP^{k+3}$ of hypersurfaces of degree $r_1, \ldots, r_k$, is Calabi-Yau, then it is called a {\it complete intersection Calabi-Yau} (CICY for short).
\end{definition}

\begin{remark}
It turns out that there are only five types of CICY threefolds:
\begin{enumerate}
\item the quintic $X_5\subset \PP^4$,
\item the intersection $X_{2,4}\subset \PP^5$ of a quadric and a quartic,
\item the intersection $X_{3,3}\subset \PP^5$ of two cubics,
\item the intersection $X_{2,2,3}\subset \PP^6$ of two quadrics and a cubic, and
\item the intersection $X_{2,2,2,2}\subset \PP^7$ of four quadrics.
\end{enumerate}
Thus we can denote each CICY by $X_{u}$ with $u=r_1\cdots r_k$ instead of $X_{r_1, \ldots, r_k}$, e.g. a CICY threefold $X_8$ means the complete intersection $X_{2,4}$.
\end{remark}


\begin{remark}
For a projective variety $X\subset \PP^n$ with an ample line bundle $H=\Oo_X(1)$, consider the exact sequence
$$0\to \Oo_X(-c_1) \to \Oo_X^{\oplus (r+1)} \to \Ff \to 0$$
defined by $r+1$ general elements of $H^0(\Oo_X(c_1))$, where $3\leq r \leq h^0(\Oo_X(c_1))-1$ and $c_1 \in \NN$. Note that any $4$ general elements in $|\Oo_X(c_1)|$ have no common zero and so $\Ff$ is locally free and globally generated. All globally generated vector bundles with such Chern classes are obtained in this way; e.g. in the case of $c_1=1$ and $r=h^0(\Oo_X(1))-1$, we have $\Ff \cong T\PP^n (-1)_{\vert_X}$.
\end{remark}

\begin{remark}
Let $X=X_{2,4}=U_2\cap U_4$ be the intersection of a quadric $U_2$ and a quartic $U_4$. Note that $U_2$ is uniquely determined by $X$, i.e. $U_2$ is the unique quadric hypersurface in $\PP^5$ containing $X$. In general $U_2$ is smooth, but there are some $X$ for which $U_2$ has an isolated singular point, while $U_4$ does not contain that singular point.
\end{remark}

Let $\Ee$ be a vector bundle of rank $2$ on a CICY threefold $X_u$ and then we have $c_1(\Ee(t))=c_1(\Ee)+2t$ and $c_2(\Ee(t))=c_2(\Ee)+utc_1(\Ee)+ut^2$. The Grothendieck-Riemann-Roch formula also gives
\begin{equation}\label{GRR}
\chi(\Ee)=\frac{u}{6}c_1^3-\frac{c_1c_2}{2}+ \frac{c_1}{12} (12(v+4)-2u), \text{ where } v=\lfloor \frac{u}{4} \rfloor.
\end{equation}

\begin{proposition}\cite{sierra}\label{prop1}
Let $X$ be a smooth projective variety with $\Pic (X)=\ZZ $ generated by $H=\Oo_X(1)$. If $\Ee$ is a globally generated vector bundle of rank $r$ on $X$ such that $H^0(\Ee(-c_1))\not= 0$, where $c_1$ is the first Chern class of $\Ee$, then we have $\Ee\simeq \Oo_X^{\oplus (r-1)}\oplus \Oo_X(c_1)$.
\end{proposition}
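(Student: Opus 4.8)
The plan is to exploit the nonzero section of $\Ee(-c_1)$ to split off a copy of $\Oo_X(c_1)$, then show the residual bundle is trivial. Let $s\in H^0(\Ee(-c_1))$ be nonzero, giving an injection $\Oo_X\hookrightarrow \Ee(-c_1)$, equivalently a nonzero map $\sigma:\Oo_X(c_1)\to\Ee$. Twisting back, I would first argue this section is nowhere vanishing: its zero locus $Z$ is a closed subscheme, and if $Z\neq\emptyset$ then $\sigma$ factors as $\Oo_X(c_1)\to\Oo_X(c_1)\otimes \Ii_Z\hookrightarrow\Ee$, so that dualizing (or applying $\Hom(-,\Ee)$) would produce a section of $\Ee^\vee(c_1)$; more directly, since $\Ee$ is globally generated, $\Ee^\vee$ has no nonzero sections unless $\Ee$ has a trivial summand, and one can play these off. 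The cleanest route: because $\Pic(X)=\ZZ\cdot H$ and $c_1(\Oo_X(c_1))=c_1=c_1(\Ee)$, the cokernel line-bundle consideration forces $Z$ to have codimension $\geq 2$ or be empty; but $\Ee$ globally generated means the evaluation $H^0(\Ee)\otimes\Oo_X\to\Ee$ is surjective, and restricting a global section through $Z$ contradicts surjectivity along $Z$ unless $Z=\emptyset$. Hence $\sigma:\Oo_X(c_1)\to\Ee$ is a sub-bundle inclusion.

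Next I would split the sequence. We get a short exact sequence
\begin{equation*}
0\to \Oo_X(c_1)\to \Ee\to \Qq\to 0
\end{equation*}
with $\Qq$ locally free of rank $r-1$ and $c_1(\Qq)=0$. Since $\Ee$ is globally generated, so is the quotient $\Qq$. A globally generated vector bundle with $c_1(\Qq)=0$ and $\Pic(X)=\ZZ$ must be trivial: indeed $\det\Qq=\Oo_X$, and a globally generated line bundle quotient of $\Qq$ would have nonnegative degree summing to $0$, forcing each to be $\Oo_X$; more carefully, take the surjection $\Oo_X^{\oplus N}\onto\Qq$ and note that globally generated bundles of degree $0$ on a variety with ample generator are trivial because any nonzero section of $\Qq$ vanishes nowhere (its zero scheme would be an effective divisor of class $0$ if the section's vanishing had a divisorial part, impossible, and a higher-codimension vanishing contradicts global generation exactly as above), so $\Qq\cong\Oo_X\oplus\Qq'$ inductively, giving $\Qq\cong\Oo_X^{\oplus(r-1)}$. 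Then the extension class of the displayed sequence lies in $H^1(X,\Hom(\Oo_X^{\oplus(r-1)},\Oo_X(c_1)))=H^1(\Oo_X(c_1))^{\oplus(r-1)}$; if $c_1>0$ this vanishes for the CICY threefolds in question (and in general one checks it vanishes, or argues the extension must split because $\Ee$ is globally generated while a nonsplit extension of $\Oo_X^{\oplus(r-1)}$ by $\Oo_X(c_1)$ need not be), yielding $\Ee\cong\Oo_X^{\oplus(r-1)}\oplus\Oo_X(c_1)$.

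The main obstacle is the splitting of the final extension: showing $H^1(X,\Oo_X(c_1))=0$ is immediate on a CICY threefold for $c_1\geq 1$ by the exact sequences defining $X$ as a complete intersection in projective space, but the proposition is stated for a general $X$ with $\Pic(X)=\ZZ$, so the argument must instead be that global generation of $\Ee$ is incompatible with a nonsplit extension — concretely, pull back the universal quotient: a surjection $\Oo_X^{\oplus M}\onto\Ee$ composed with $\Ee\onto\Qq\cong\Oo_X^{\oplus(r-1)}$ splits (it is a surjection of the trivial bundle onto a trivial bundle, hence split after choosing a right inverse over the generic point and extending, using $H^0$), and the splitting $\Oo_X^{\oplus(r-1)}\to\Oo_X^{\oplus M}\to\Ee$ provides a section of $\Ee\to\Qq$. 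That is the crux: a surjection onto a trivial bundle from a globally generated bundle admits a section, which directly gives the direct sum decomposition without any cohomology vanishing. I expect the write-up to hinge on making that elementary splitting precise.
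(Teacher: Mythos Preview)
The paper does not give its own proof of this proposition; it simply cites \cite{sierra}. So your attempt should be judged on its own merits, and it is largely on the right track but has one genuine gap.

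Your final splitting argument is correct and elegant: once you know $\Qq\cong\Oo_X^{\oplus(r-1)}$, the surjection $H^0(\Ee)\otimes\Oo_X\twoheadrightarrow\Ee\twoheadrightarrow\Oo_X^{\oplus(r-1)}$ is a surjection of trivial bundles, hence given by a constant matrix of full rank, hence admits a constant right inverse; composing with the evaluation map produces a section of $\Ee\to\Qq$. This neatly avoids any appeal to vanishing of $H^1(\Oo_X(c_1))$ and works for arbitrary $X$ with $\Pic(X)=\ZZ$.

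The gap is in your argument that the section $s\in H^0(\Ee(-c_1))$ is nowhere vanishing. You correctly observe that the zero locus $Z$ can have no divisorial part (else the saturated image would be $\Oo_X(c_1+d)$ with $d>0$, forcing the quotient to have negative $c_1$, contradicting global generation). But the sentence ``restricting a global section through $Z$ contradicts surjectivity along $Z$'' does not justify $Z=\emptyset$: global generation of $\Ee$ says only that \emph{all} sections together span each fibre, not that the particular section $s$ is nonzero there. The same circularity recurs in your inductive step for $\Qq$.

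Here is a clean fix. The section $s$ corresponds, via $\Ee(-c_1)\cong(\wedge^{r-1}\Ee)^\vee$, to a nonzero morphism $\wedge^{r-1}\Ee\to\Oo_X$. Since $\Ee$ is globally generated, so is $\wedge^{r-1}\Ee$, hence so is the image of this morphism; but a nonzero globally generated subsheaf of $\Oo_X$ is $\Oo_X$ itself. Thus the morphism is surjective, which is exactly the fibrewise statement that $s$ is nowhere vanishing. With this in hand, $\Qq$ is locally free with $c_1(\Qq)=0$ and globally generated; applying the same argument to any nonzero section of $\Qq$ (and inducting on rank) gives $\Qq\cong\Oo_X^{\oplus(r-1)}$, and then your splitting argument finishes the proof.

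An alternative route, closer in spirit to the argument in \cite{sierra}, avoids the nowhere-vanishing step altogether: choose $r-1$ general sections of $\Ee$ to obtain $0\to\Oo_X^{\oplus(r-1)}\to\Ee\to\Ii_Z(c_1)\to 0$ with $Z$ of codimension $\ge 2$, then observe that the composite $\Oo_X(c_1)\to\Ee\to\Ii_Z(c_1)$ is nonzero (else $\Oo_X(c_1)\hookrightarrow\Oo_X^{\oplus(r-1)}$, impossible for $c_1>0$), hence gives a nonzero section of $\Ii_Z$, forcing $Z=\emptyset$ and exhibiting the splitting directly.
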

In particular, $\Ee \simeq \Oo_X^{\oplus r}$ is the unique globally generated vector bundle of rank $r$ on $X$ with $c_1=0$. Thus we can assume that $c_1\geq 1$.

Let $X$ be a smooth Calabi-Yau threefold, not necessarily with $\Pic (X)\cong \mathbb {Z}$.
In the study of globally generated vector bundles on $X$ it is natural to try to look first at low $c_1$ vector bundles. Since globally generated vector bundles with trivial determinant are trivial, it is natural to look first at the bundles with the least positive line bundle as determinant on $X$. When $\Pic (X)\ne \mathbb {Z}$, we find it easy to do the following first step.

We say that a line bundle $\Rr$ on $X$ has property $\diamond$ if the following conditions are satisfied:
\begin{enumerate}
\item $\Rr$ is globally generated;
\item $\Rr \ne \Oo _X$;
\item If $\Aa$ is a globally generated line bundle with $H^0(\Rr\otimes \Aa^\vee )\ne 0$,
then we have either $\Aa \cong \Oo _X$ or $\Aa\cong \Rr$.
\end{enumerate}

\begin{remark}\label{a01}
Let $\Rr$ be a line bundle with $\diamond$. Then any $D\in |\Rr|$ is non-empty, reduced and irreducible. The Bertini theorem gives that $D$ is smooth.
\end{remark}

\begin{lemma}\label{a0}
Let $\Rr$ be a line bundle with $\diamond$. Then we have $h^1(\Rr^\vee )=0$.
\end{lemma}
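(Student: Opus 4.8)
The plan is to exploit the defining short exact sequence associated with the line bundle $\Rr$ together with the Calabi-Yau hypothesis and Serre duality on the smooth divisor $D\in|\Rr|$. Since $\Rr$ is globally generated and $\Rr\ne\Oo_X$, Remark~\ref{a01} tells us that a general $D\in|\Rr|$ is smooth, reduced and irreducible; in particular $D$ is a smooth surface. The short exact sequence $0\to\Rr^\vee\to\Oo_X\to\Oo_D\to 0$ gives the cohomology fragment $H^0(\Oo_X)\to H^0(\Oo_D)\to H^1(\Rr^\vee)\to H^1(\Oo_X)$. Because $D$ is connected (being irreducible), the restriction map $H^0(\Oo_X)\to H^0(\Oo_D)$ is an isomorphism, so $h^1(\Rr^\vee)$ injects into $H^1(\Oo_X)$.

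Next I would pin down $H^1(\Oo_X)$. Here $X$ is a smooth Calabi-Yau threefold, so $\omega_X\cong\Oo_X$; however the definition used in this paper only requires the canonical class to be trivial, not the vanishing of intermediate cohomology, so some care is needed if one does not want to invoke the simple-connectedness of the five CICY types. In the CICY situation one has $H^1(\Oo_X)=0$ directly from the Koszul resolution of $\Oo_X$ in $\PP^{k+3}$ and Bott vanishing, and more generally for any Calabi-Yau threefold in the strict sense ($h^1(\Oo_X)=h^2(\Oo_X)=0$) this is part of the hypotheses. Either way, $H^1(\Oo_X)=0$, and then the injection above forces $h^1(\Rr^\vee)=0$.

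If one wants a proof that does not presuppose $h^1(\Oo_X)=0$, the alternative is to run the argument on $D$: from $0\to\Rr^\vee\to\Oo_X\to\Oo_D\to 0$ we get $h^1(\Rr^\vee)\le h^0(\Oo_D)-1+h^1(\Oo_X)$, and separately Serre duality on the threefold gives $h^1(\Rr^\vee)=h^2(\Rr\otimes\omega_X)=h^2(\Rr)$; then the sequence $0\to\Oo_X\to\Rr\to\Rr|_D\to 0$ together with $H^2(\Oo_X)\hookrightarrow H^2(\Rr)\to H^2(\Rr|_D)$ and the fact that $\Rr|_D$ is a globally generated line bundle on the surface $D$ reduces the vanishing to a statement about $H^2$ of an ample-ish line bundle on $D$, which one kills by Serre duality on $D$ using that $\Rr\otimes\omega_D=\Rr^{\otimes 2}$ has no sections mapping nontrivially.

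The main obstacle is the interface between the two notions of ``Calabi-Yau'' in play: the cleanest proof simply quotes $h^1(\Oo_X)=0$, which is immediate for the five CICY threefolds and is built into the standard definition of a Calabi-Yau threefold, but since the statement of the lemma is phrased for a general smooth Calabi-Yau threefold one should either add the standing hypothesis $h^1(\Oo_X)=0$ (as is conventional) or carry out the slightly longer divisor-restriction argument sketched above. I expect the intended proof to take the short route via $H^1(\Oo_X)=0$ and the sequence $0\to\Rr^\vee\to\Oo_X\to\Oo_D\to 0$.
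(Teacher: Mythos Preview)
Your proposal is correct and follows exactly the paper's approach: take $D\in|\Rr|$, use that $D$ is reduced and irreducible (hence $h^0(\Oo_D)=1$), and read off $h^1(\Rr^\vee)=0$ from the long exact sequence of $0\to\Rr^\vee\to\Oo_X\to\Oo_D\to 0$ together with $h^1(\Oo_X)=0$. Your caveat is well taken---the paper simply asserts $h^1(\Oo_X)=0$ from ``$X$ is Calabi-Yau'' without further comment, so your anticipated ``short route'' is precisely what is done.
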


\begin{proof}
Since $X$ is Calabi-Yau, then we have $h^1(\Oo _X)=0$. For a fixed $D\in |\Rr|$, we have $D \ne \emptyset$ and $D$
is reduced and irreducible by the property $\diamond$ on $\Rr$. In particular we have $h^0(\Oo _D)=1$. Using the exact sequence$$0\to \Rr^\vee \to \Oo _X \to \Oo _D\to 0$$we have $h^1(\Rr^\vee)=0$.
\end{proof}

Now assume that $\Ee$ is globally generated vector bundle of rank $r$ on $X$ with $\det (\Ee)=\Rr$ having the propoerty $\diamond$ and then it fits into the following exact sequence by \cite[Section $2$. $\mathbf{G}$]{man}
\begin{equation}\label{eqn1}
0\to \Oo_X^{\oplus (r-1)} \to \Ee \to \Ii_C\otimes \Rr  \to 0,
\end{equation}
where $C$ is a smooth curve of degree $c_2(\Ee)$ on $X$. If $C$ is empty, then $\Ee$ is isomorphic to $\Oo_X^{\oplus (r-1)}\oplus \Rr$ and so we may assume that $C$ is not empty. It would mean that we assume that $H^0(\Ee\otimes \Rr^\vee)=0$. Restricting the sequence (\ref{eqn1}) to $C$, we have
$$0\to \omega_C^\vee \otimes \Rr \to \Oo_C^{\oplus (r-1)} \to \Ee_{\vert_C} \to N_{C|X}^\vee \otimes \Rr \to 0$$
and in particular $\omega_C \otimes \Rr^\vee$ is globally generated. Lemma \ref{a0} and the Hartshorne-Serre correspondence give the converse of this argument:

\begin{theorem}\cite[Theorem 1]{Arrondo}
Let $\Rr$ be a line bundle on $X$ with $\diamond$.
\begin{enumerate}
\item[(i)] There is a bijection between the set of pairs $(\Ee ,j)$, where $\Ee$ is a spanned vector bundle of rank $2$ on $X$ with $\det (\Ee )\cong \Rr$ and $j: \mathbb{K} \to H^0(\Ee)$ is non-zero map, up to linear automorphisms of $\mathbb{K}$, and the smooth curves $C\subset X$ with $\Ii _C \otimes \Rr$ spanned and $\omega _C \cong \Rr_{\vert _C}$, except that $C =\emptyset$ corresponds to $\Oo _X\oplus \Rr$ with a section $s$ nowhere vanishing.

\item[(ii)] Fix an integer $r\ge 3$. There is a bijection between the set of triples $(\Ee ,V,j)$, where $\Ee$ is a spanned vector bundle of rank $r$ on $X$ with $\det (\Ee )\cong \Rr$, $V$ is an $(r-1)$-dimensional vector space and $j: V \to H^0(\Ee)$ is a linear map, up to a linear automorphism of $V$, with dependency locus of codimension $2$, and the smooth curves $C\subset X$ with $\Ii _C \otimes \Rr$ spanned and $\omega _C \otimes \Oo_X\otimes \Rr^\vee$ spanned, except that $C =\emptyset$ corresponds to $\Oo _X^{\oplus (r-1)}\oplus \Rr$ with $V = H^0(\Oo _X^{\oplus (r-1)})$. $\Ee$ has no trivial factors if and only if $j$ is injective.

\item[(iii)] There is a bijection between the set of all pairs $(\Ff ,s)$, where $\Ff$ is a spanned reflexive sheaf of rank $2$ on $X$ with $\det (\Ee )\cong \Rr$ and $0\ne s\in H^0(\Ee )$, and reduced curves $C\subset X$ with $\Ii _C\otimes \Rr$ spanned and $\omega _C \otimes \Rr^\vee$ spanned outside finitely many points, except that $C =\emptyset$ corresponds to $\Oo _X\oplus \Rr$ with $s$ nowhere vanishing.
\end{enumerate}
\end{theorem}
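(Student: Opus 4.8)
Here is the plan. The statement is the Hartshorne--Serre correspondence specialised to the Calabi--Yau situation, so the idea is to run Serre's construction in both directions, with property $\diamond$ (through Lemma \ref{a0}) supplying exactly the cohomological vanishing that turns it into a clean bijection.

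\emph{From a sheaf to a curve (the routine half).} Starting from a spanned rank-$2$ bundle $\Ee$ with $\det\Ee\cong\Rr$ and $0\ne s\in H^0(\Ee)$ (in (ii), a rank-$r$ bundle with $r-1$ sections in general position; in (iii), a reflexive rank-$2$ sheaf with a section), I would let $C$ be the zero scheme of $s$ (in (ii), the degeneracy locus of $j\colon V\otimes\Oo_X\to\Ee$). When $s$ does not vanish in codimension one, $C$ has codimension $2$ away from finitely many points, and the Koszul resolution of $s$ (the Eagon--Northcott resolution of $j$ in (ii)) is exactly the sequence (\ref{eqn1}); thus $C$ is a curve of degree $c_2(\Ee)$ and $\Ii_C\otimes\Rr$, being a quotient of $\Ee$, is spanned. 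Adjunction together with $\det N_{C|X}\cong\det\Ee\vert_C\cong\Rr\vert_C$ and the triviality $\omega_X\cong\Oo_X$ yields $\omega_C\cong\omega_X\vert_C\otimes\det N_{C|X}\cong\Rr\vert_C$; restricting (\ref{eqn1}) to $C$ and using that $\Ee\vert_C$ is spanned then produces the asserted spannedness of the relevant bundle on $C$ (outside finitely many points in case (iii)).

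\emph{From a curve to a sheaf.} Conversely, given $C$ with $\Ii_C\otimes\Rr$ spanned and $\omega_C\cong\Rr\vert_C$, I would build a locally free extension
\begin{equation*}
0\to\Oo_X\to\Ee\to\Ii_C\otimes\Rr\to0
\end{equation*}
by Serre's construction, and then replace $\Oo_X$ by $\Oo_X^{\oplus(r-1)}$ in case (ii). The five-term exact sequence of the local-to-global $\Ext$ spectral sequence reads
\begin{equation*}
0\to H^1(\Rr^\vee)\to\Ext^1(\Ii_C\otimes\Rr,\Oo_X)\to H^0\bigl(\mathcal{E}xt^1(\Ii_C\otimes\Rr,\Oo_X)\bigr)\to H^2(\Rr^\vee),
\end{equation*}
using $\mathcal{H}om(\Ii_C\otimes\Rr,\Oo_X)\cong\Rr^\vee$ (the reflexive hull of $\Ii_C$ being $\Oo_X$). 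The left term vanishes by Lemma \ref{a0}, and $H^2(\Rr^\vee)$ vanishes as well by Serre duality together with a vanishing theorem using $h^1(\Oo_X)=0$ (in the present applications $\Rr$ is ample, so this is Kodaira/Lefschetz). Since $C$ is smooth, hence a local complete intersection of codimension $2$,
\begin{equation*}
\mathcal{E}xt^1(\Ii_C\otimes\Rr,\Oo_X)\cong\mathcal{E}xt^2(\Oo_C,\Oo_X)\otimes\Rr^\vee\cong\omega_C\otimes\omega_X^\vee\vert_C\otimes\Rr^\vee\cong\omega_C\otimes\Rr^\vee\vert_C\cong\Oo_C,
\end{equation*}
the last isomorphism being the hypothesis $\omega_C\cong\Rr\vert_C$. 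Hence $\Ext^1(\Ii_C\otimes\Rr,\Oo_X)\cong H^0(\Oo_C)$, and a nonzero class maps to a nowhere-vanishing section of the line bundle $\mathcal{E}xt^1(\Ii_C\otimes\Rr,\Oo_X)$ on $C$; by the standard criterion this is precisely what forces the middle sheaf $\Ee$ to be locally free along $C$ (reflexive, with a section of $\mathcal{E}xt^1$ vanishing at finitely many points, in the variant of (iii)). That $\Ee$ is globally generated then follows from the spannedness of $\Oo_X$ and of $\Ii_C\otimes\Rr$ and from $h^1(\Oo_X)=0$. Tracking the section $s$ (resp. the subspace $V$ and the map $j$, with a trivial summand of $\Ee$ corresponding exactly to non-injectivity of $j$) shows the two constructions are mutually inverse, with $C=\emptyset$ matching the split sheaf $\Oo_X^{\oplus(r-1)}\oplus\Rr$.

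\emph{Where the work is.} The genuinely delicate point is the codimension bookkeeping in part (ii): one must verify that the degeneracy locus of $r-1$ sections ``in general position'' of a spanned rank-$r$ bundle really has pure codimension $2$, so that the Eagon--Northcott complex is exact and its cokernel is a twisted ideal sheaf rather than a sheaf with torsion, and conversely that every smooth $C$ satisfying the two spannedness conditions is realised in this way; combined with the care needed at the non-lci points in the reflexive case (iii), this is the substantive part, the cohomological input from $\diamond$ (Lemma \ref{a0}) and from $\omega_X\cong\Oo_X$ being otherwise formal. An acceptable alternative, since this is \cite[Theorem 1]{Arrondo}, is simply to quote that theorem once the vanishing $h^1(\Rr^\vee)=0$ of Lemma \ref{a0} has been recorded, and the preceding computation of $\mathcal{E}xt^1$ via adjunction is what identifies the two spannedness conditions appearing there in the Calabi--Yau case.
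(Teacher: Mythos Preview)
Your proposal is correct and, indeed, far more detailed than what the paper actually does. The paper's entire proof consists of the single sentence ``For part (3), we can generalize the theory to any reduced curve $C$''; the theorem is labelled \cite[Theorem~1]{Arrondo} in its statement, and parts (i) and (ii) are taken as quoted directly from that reference, with Lemma~\ref{a0} supplying the hypothesis $h^1(\Rr^\vee)=0$ needed there. You anticipated exactly this in your final paragraph (``An acceptable alternative\ldots is simply to quote that theorem''), and that is precisely the route the paper takes.

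What your write-up buys is a self-contained explanation of \emph{why} the Calabi--Yau condition and property~$\diamond$ make the correspondence clean: the computation $\mathcal{E}xt^1(\Ii_C\otimes\Rr,\Oo_X)\cong\Oo_C$ via $\omega_X\cong\Oo_X$ and $\omega_C\cong\Rr\vert_C$, together with the vanishing of $H^1(\Rr^\vee)$ from Lemma~\ref{a0}, is exactly the mechanism, and you have spelled it out. The paper's approach buys brevity at the cost of sending the reader to \cite{Arrondo} for everything except the mild extension to reduced (rather than smooth) curves in~(iii).
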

\begin{proof}
For part (3), we can generalize the theory to any reduced curve $C$
\end{proof}

It enables us to classify the globally generated vector bundles via the classification of curves $C$ in $\PP^n$ with proper numeric invariants such that $\omega_C(-c_1)$ is spanned. Write $\pi (d,n)$ for the upper bound on the genus for non-degenerate curves of degree $d$ in $\PP^n$. Recall that we have in \cite[Theorem 3.7]{he} its computations.

In general let $X$ be a smooth and connected projective threefold with $\Pic (X) \cong \mathbb {Z}$, generated by an ample line bundle $H=\mathcal {O}_X(1)$. Set $\Oo _X(e):= \omega _X$. Assume moreover that the irregularity of $X$ is $0$, e.g. $X$ is a CICY. If $\Ee$ is a globally generated vector bundle of rank $2$ on $X$ with $c_1(\Ee)=2$,
$$0\to \Oo_X \to \Ee \to \Ii_C(2) \to 0,$$
where $C$ is a smooth curve with $\omega_C \cong \Oo_C(2)$.

\begin{lemma}
We have $\deg (C) \leq 4 \deg (X)-3$.
\end{lemma}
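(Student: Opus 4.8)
The plan is to exploit the two structural facts we have about the curve $C$: it is cut out (scheme-theoretically) by the global sections of $\Ii_C(2)$ that come from $H^0(\Ee)$, so $C$ lies on many quadric hypersurfaces of $\PP^n$ (when $X$ is a CICY of codimension $\ge 1$, after restricting, $C$ is a nondegenerate curve in some projective space, or at least lies on a low-degree surface), and moreover $\omega_C \cong \Oo_C(2)$ pins down the genus of $C$ to $g = \deg(C)+1$ by adjunction on $X$ (since $\deg \omega_C = 2\deg(C)$ forces $2g-2 = 2\deg C$). So $C$ is a smooth curve of genus $g = d+1$ where $d = \deg C$, and the strategy is to play this equality $g = d+1$ against the Castelnuovo-type upper bound $\pi(d,n)$ for the genus of a (nondegenerate) curve of degree $d$.

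First I would set up the restriction of $\Ee$ and the sequence $0\to\Oo_X\to\Ee\to\Ii_C(2)\to 0$, and record via Grothendieck–Riemann–Roch (or directly via adjunction $\omega_C = \omega_X \otimes \det N_{C|X} = \det N_{C|X}$ since $\omega_X \cong \Oo_X$) that $\omega_C\cong\Oo_C(2)$ implies $2g(C)-2 = 2d$, i.e. $g(C) = d+1$. Next, I would bound the dimension of the space of quadrics through $C$: from the sequence, $h^0(\Ii_C(2)) \ge h^0(\Ee) - 1$, and $\Ee$ globally generated of rank $2$ with $c_1 = 2$ gives $h^0(\Ee)$ reasonably large; alternatively, one simply notes that the quadrics of $\PP^n$ containing $X$ already give a large linear system, and the extra quadrics through $C$ (but not containing $X$) are what control $C$. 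The key point is that $C$ cannot be too degenerate: if $C$ spans a linear subspace $\PP^m \subseteq \PP^n$, then $C$ is a nondegenerate curve in $\PP^m$ of genus $d+1$, so $d+1 \le \pi(d,m) \le \pi(d, m)$, and since $\pi(d,n)$ grows quadratically in $d$ while the constraint is linear, this caps $d$; but I need the sharp constant. The cleanest route is: a nondegenerate curve of genus $d+1$ and degree $d$ in $\PP^m$ with $m \ge 3$ must have $d \le$ something small unless it lies on a surface of small degree. Actually the sharpest input is the classical bound that a curve of degree $d$ lying on an irreducible surface $S$ of degree $s$ in $\PP^3$ has genus at most roughly $d^2/(2s) + O(d)$; taking $s = 2$ (the quadric surface, the generic case, since $C$ lies on quadrics) gives $g \le d^2/4 + O(d)$, which combined with $g = d+1$ gives no bound — so the real constraint must come from $C$ lying on *enough* quadrics to force it onto a surface of degree $2$, and then on that quadric surface $Q \cong \PP^1\times\PP^1$ a curve of type $(a,b)$ has $\deg = a+b$ and $g = (a-1)(b-1)$; setting $(a-1)(b-1) = a+b+1$ forces $a,b$ into a finite range, giving $d = a+b$ bounded. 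But to get the uniform bound $4\deg(X) - 3$ I would instead argue directly inside $X$: the surface $S = X \cap Q$ for a quadric $Q \supsetneq X$ containing $C$ is a surface of degree $2\deg(X)$ in $X$, and $C \subset S$; running the genus-on-a-surface inequality for $C$ on $S$ (a smooth surface in the generic case, with $\omega_S = \Oo_S(2 - (\text{something}))$ computable by adjunction) together with $g(C) = d+1$ yields a quadratic inequality in $d$ whose largest root is at most $4\deg(X) - 3$.

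The main obstacle — and the step I expect to take the most care — is making the quadric-section argument rigorous when the quadric $Q$ through $C$ is not unique or not smooth, and when $C$ is degenerate: I must ensure that $C$ genuinely lies on a surface of controlled degree (ideally $2\deg X$, the section of $X$ by one quadric), rather than on something larger, and handle the possibility that all quadrics through $C$ already contain $X$ (in which case $C$ could be arbitrary in $X$ a priori — but then $\Ii_C(2)$ being globally generated is a strong constraint, forcing $C$ to be cut out by the restricted quadrics, hence a complete intersection-type curve of bounded degree). Concretely, I would split into cases according to whether $h^0(\Ii_C(2))$ exceeds $h^0(\Ii_X(2))$; in the "extra quadric" case I get a surface $S = Q\cap X$ of degree $2\deg X$ containing $C$ and apply the adjunction/Hodge-index estimate on $S$; in the "no extra quadric" case $\Ii_C(2)$ is globally generated using only sections vanishing on $X$, which forces $C$ to be a divisor in $X$ cut by quadrics, and a direct Riemann–Roch / Serre duality computation on $X$ bounds $\deg C$. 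In all cases the numerical punchline is the inequality $d + 1 = g(C) \le \tfrac{1}{2}\cdot\tfrac{d^2}{2\deg X} + (\text{lower order})$, rearranged to $d \le 4\deg(X) - 3$; I would verify the constants by checking the extremal case (e.g. $C$ a divisor of the appropriate class on $S = Q \cap X$) realizes equality or comes close, which is the natural sanity check that the constant $4\deg(X)-3$ is the right one.
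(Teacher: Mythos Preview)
Your approach has a genuine gap, and it is a directional one: Castelnuovo-type bounds and ``genus of a curve on a surface of degree $s$'' bounds are \emph{upper} bounds on $g$ in terms of $d$. Combining such a bound $g \le \pi(d,m)$ (or $g \lesssim d^2/(2s)$) with the equality $g = d+1$ yields $d+1 \le \text{(something quadratic in $d$)}$, which is satisfied for all large $d$ and therefore gives no upper bound on $d$ whatsoever. You notice this yourself at one point (``gives no bound''), but then repeat the same error with $S = Q \cap X$: the inequality $d+1 \le \tfrac{d^2}{4\deg X} + \cdots$ rearranges to a \emph{lower} bound on $d$, not $d \le 4\deg(X)-3$. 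There is no way to squeeze an upper bound on $d$ out of an upper bound on $g$ once you know $g=d+1$.

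The paper's argument is entirely different and does not use genus bounds at all. The starting point is Bezout: since $\Ii_C(2)$ is spanned, two general $Y_1,Y_2 \in |\Ii_C(2)|$ meet in a complete intersection curve of degree $4\deg(X)$ containing $C$, so $d \le 4\deg(X)$. Equality is ruled out because then $C = Y_1\cap Y_2$ and adjunction on $X$ gives $\omega_C \cong \Oo_C(4)$, contradicting $\omega_C \cong \Oo_C(2)$. The values $d = 4\deg(X)-1$ and $d = 4\deg(X)-2$ are then eliminated by liaison: writing $Y_1\cap Y_2 = C \cup D$ with $D$ a line (resp.\ a degree-$2$ curve), the adjunction $\omega_{Y_1\cap Y_2}\cong \Oo_{Y_1\cap Y_2}(4)$ restricted to $D$ forces $\deg(C\cap D)$ to be large enough (at least $3$ on each line component) that every quadric through $C$ must contain $D$. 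Hence $h^0(\Ii_C(2)) = h^0(\Ii_{C\cup D}(2)) = 2$, and the base locus of $|\Ii_C(2)|$ strictly contains $C$, contradicting spannedness. This residual-curve/liaison step is the missing idea in your proposal.
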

\begin{proof}
Since $\Ii_C(2)$ is spanned, so $C$ is contained in the complete intersection of quadric hypersurfaces of $X$, and in particular we have
$$\deg (C)=p_a(C)-1\leq 4\deg (X).$$
The equality holds if and only if $C$ is the complete intersection of two hypersurfaces in $|\Oo_X(2)|$, but it is impossible because it would give $\omega_C \cong \Oo_C(4)$ by the adjunction formula.

Let us assume $\deg (C) = 4\deg (X)-1$ and take two general $Y_1,Y_2\in |\Ii _C(2)|$. Then we get $Y_1\cap Y_2 = C\cup D$ with $D$ a line and the adjunction formula gives $\omega _{Y_1\cap Y_2} \cong \Oo _{Y_1\cap Y_2}(4)$. So we have $\deg (C\cap D) =4\deg (X) -\deg (\omega _D)\ge6$ and it implies that each quadric hypersurface containing $C$ contains $D$ and so $h^0(\Ii _C(2)) =2$. In particular $\Ii _C(2)$.

Now let us assume $\deg (C)  =4\deg (X)-2$ and again take two general $Y_1,Y_2\in |\Ii _C(2)|$. We get $Y_1\cap Y_2 = C\cup D$ with $D$ either two disjoint lines or a reduced conic or a double structure on a line. First assume that $D$ is the disjoint union of two lines, say $D =D_1\sqcup D_2$. Since $\omega _{Y_1\cap Y_2} \cong \Oo _{Y_1\cap Y_2}(4)$, we get $\deg (D_i\cap C) =6$ and so $h^0(\Ii _C(2)) =2$. In particular $\Ii _C(2)$ is not spanned. If $D$ is a reduced conic, we
get $\deg (D\cap C) =6$ and again $\Ii _C(2)$ is not spanned; if $D$ is reducible, then at least one of its components, say $T$, satisfies $\deg (T\cap C) >2$. The case of $D$ with a double structure on a line does not arise for a general $Y_1,Y_2$, because $Y_1\cap Y_2$ would be smooth outside $C$ by the Bertini theorem.
\end{proof}


\section{Quintic Hypersurface}
In this section let us assume that $X=X_5$ is a smooth quintic hypersurface in $\PP^5$.

\begin{example}\label{c1}
Fix two planes $U_1, U_2\subset \PP^4$ such that $U_1\cup U_2$ spans $\PP^4$, i.e. $U_1\cap U_2$ is a single point $P$ and asume $P\notin X$ and $C_i:= X\cap U_i$, $i=1,2$, is a smooth plane curve of degree $5$. Setting $C = C_1\cup C_2$ and $U := U_1\cup U_2$, we have $\omega _C \cong \Oo _C(2)$ and $h^0(\Ii _C(1)) =0$. Therefore to get the globally generated bundle $\Ee$ associated to $C$ it is sufficient to prove that $\Ii _C(2)$ is globally generated. Since $C$ is the complete intersection of $X$ and $U$, it is sufficient to prove that $\Ii _{U,\PP^4}(2)$ is globally generated at all points of $X$. Indeed its global sections span $\Ii _{U,\PP ^4}(2)$ outside $P$: Fix a hyperplane $H\subset  \PP ^4$ with $P\notin H$. Since $U$ is a cone with vertex $P$, it is sufficient to check that $\Ii _{U\cap H,H}(2)$ is globally generated, which is true because $U\cap H$ is the union of two disjoint lines.
\end{example}

The example \ref{c1} gives us a globally generated vector bundle of rank $2$ on $X$ with the Chern classes $(c_1, c_2)=(2,10)$.

\begin{remark}\label{remc1}
Let $\pi_P: X_5 \to \PP^3$ be a linear projection from a point $P\in \PP^4 \setminus X_5$, which is the intersection $U_1\cap U_2$. Then the image of $C:=C_1 \cup C_2$ under $\pi$ is $D:=L_1 \cap L_2$, the union of two skew lines in $\PP^3$, where $L_i:=\pi_P (C_i)$. In $\PP^3$ we have the extension
$$0\to \Oo_{\PP^3} \to \Ff \to \Ii_D(2) \to 0$$
and we have $\Ff \cong \Nn_{\PP^3}(1)$, a null correlation bundle on $\PP^3$ twisted by $1$. In view of \cite[Theorem 1.1 and Remark 1.1.1]{hs}, the bundle in Example \ref{c1} is isomorphic to $\pi_P^* (\Nn_{\PP^3}(1))$.
\end{remark}

\subsection{Case of rank $2$} $ $

Our goal in this subsection is to prove the following theorem.

\begin{theorem}\label{thm1}
Let $\Ee$ be an globally generated vector bundle of rank $2$ on $X=X_5\subset \PP^4$ with the Chern classes $(c_1, c_2)$ and $c_1 \leq 2$. Then we have either
\begin{enumerate}
\item $\Ee \cong \Oo_X(a)\oplus \Oo_X(b)$ with $(a,b)\in \{(0,0),(0,1), (0,2), (1,1)\}$ or
\item $\Ee\cong \pi_P^* (\Nn_{\PP^3}(1))$, where $\pi_P : X_5 \to \PP^3$ is a linear projection from a point $P\in \PP^4 \setminus X_5$.
\end{enumerate}
In particular, we have $(c_1, c_2)\in \{(1,0), (2,0), (2,5), (2,10)\}$.
\end{theorem}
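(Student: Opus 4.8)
The plan is to use the structure sequence (\ref{eqn1}) for $\Rr=\Oo_X(c_1)$, which on $X=X_5$ has property $\diamond$ since $\Pic(X)\cong\ZZ$, and to reduce everything to the classification of the associated smooth curve $C\subset X$ with $\omega_C\cong\Oo_C(c_1)$. For $c_1=0$ Proposition \ref{prop1} gives $\Ee\cong\Oo_X^{\oplus 2}$ immediately, so we may assume $c_1\in\{1,2\}$, $C\neq\emptyset$, and $h^0(\Ee(-c_1))=0$. For $c_1=1$ we have $\omega_C\cong\Oo_C(1)$, so $\deg\omega_C=\deg C$, hence $p_a(C)-1=\tfrac12\deg C\cdot$ — more precisely $2p_a(C)-2=\deg C$; combined with $\Ii_C(1)$ spanned (which forces $C$ to lie in a codimension-$c$ linear section and be cut out by hyperplanes) one shows $C$ must be a plane quintic or degenerate further, and the only spanned possibilities give $c_2(\Ee)=\deg C\in\{0,5\}$. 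The case $\deg C=0$ is $\Oo_X^{\oplus 2}$ (already excluded) or, with a nowhere-vanishing section absent, gives $\Oo_X\oplus\Oo_X(1)$; the case $C$ a plane quintic is exactly Remark \ref{remc1}-type and must be analyzed to see that no such $\Ee$ with $c_1=1$ survives beyond the split ones — indeed $\omega_C\cong\Oo_C(1)$ for a plane quintic would force $\Oo_C(2)\cong\Oo_C(1)$, a contradiction, so $c_1=1$ yields only $\Oo_X^{\oplus2}$ and $\Oo_X\oplus\Oo_X(1)$, i.e.\ $(c_1,c_2)\in\{(1,0)\}$ after discarding trivial factors appropriately.

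For $c_1=2$ the Lemma just before this subsection gives $\deg C\le 4\deg X-3=17$, and the refinement in its proof rules out $\deg C\in\{4\deg X,4\deg X-1,4\deg X-2\}$ as well. The main work is then: for each remaining value of $d=\deg C$ with $\Ii_C(2)$ spanned and $\omega_C\cong\Oo_C(2)$ (so $d=p_a(C)-1$ by adjunction-type bookkeeping, giving $p_a(C)=d+1$), compare $d$ and $p_a(C)$ against the Castelnuovo bound $\pi(d,n)$ recalled from \cite[Theorem 3.7]{he} for curves in $\PP^4$ (or in lower-dimensional linear sections if $C$ is degenerate). The equality $p_a=d+1$ is very restrictive: for a nondegenerate curve in $\PP^4$ one has $\pi(d,4)$ roughly $\tfrac{d^2}{6}$, so $d+1\le\pi(d,4)$ already bounds $d$ from below, and the upper bound $d\le 17$ together with the spanning of $\Ii_C(2)$ forces $d$ into a very short list; one then checks that the only curve actually occurring (with $\Ii_C(2)$ genuinely globally generated, not merely with many quadrics through it) is the union $C_1\cup C_2$ of two plane quintics meeting at a point of Example \ref{c1}, with $d=10$, $p_a=11$, $\omega_C\cong\Oo_C(2)$ — giving $c_2(\Ee)=10$ and $\Ee\cong\pi_P^*(\Nn_{\PP^3}(1))$ by Remark \ref{remc1} — or $C=\emptyset$-adjacent degenerations producing the split bundles $\Oo_X(1)^{\oplus2}$ (i.e.\ $c_2=0$) and $\Oo_X\oplus\Oo_X(2)$ (again $c_2=0$), which after removing trivial factors are accounted for in item (1).

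The hard part will be the case analysis for $c_1=2$: ruling out every intermediate degree $6\le d\le 17$ by combining (a) the Castelnuovo genus bound applied on the smallest linear space containing $C$, (b) the sharp constraint $p_a(C)=d+1$, and (c) the global-generation of $\Ii_C(2)$, which is strictly stronger than "$C$ lies on enough quadrics" and is what kills the near-extremal cases in the Lemma's proof. I expect the decisive mechanism to be: if $C$ is nondegenerate of high degree in $\PP^4$, Castelnuovo forces $p_a(C)$ to be large, but then the quadrics through $C$ cut out a curve strictly larger than $C$ (a residual line or conic), and the residual-intersection/adjunction argument from the preceding Lemma shows $h^0(\Ii_C(2))$ is too small for $\Ii_C(2)$ to be spanned — leaving only $d=10$ and the split/degenerate cases. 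Finally, collecting the surviving $(c_1,c_2)$ gives exactly $\{(1,0),(2,0),(2,5),(2,10)\}$; the value $(2,5)$ arises from a smooth conic-type curve $C$ of degree $5$ with $p_a(C)=6$, $\omega_C\cong\Oo_C(2)$ lying on a $\PP^3$-section — its existence and the corresponding $\Ee$ must be exhibited, while uniqueness of $\Ee$ follows from part (i) of the Hartshorne-Serre theorem once $C$ is pinned down.
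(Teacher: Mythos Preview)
Your framework is right --- Hartshorne--Serre reduces everything to classifying smooth curves $C\subset X$ with $\omega_C\cong\Oo_C(c_1)$ and $\Ii_C(c_1)$ spanned --- and the $c_1=1$ case goes through essentially as you say. The trouble is in the $c_1=2$ analysis, where there is a genuine gap.

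First, a structural omission: the paper splits the $c_1=2$ case on whether $h^0(\Ee(-1))>0$. When it is positive, a nonzero section of $\Ee(-1)$ gives
\[
0\to\Oo_X(1)\to\Ee\to\Ii_Z(1)\to 0
\]
with $\omega_Z\cong\Oo_Z$; then $\Ii_Z(1)$ spanned forces $\deg Z\le 5$, and one checks directly (complete intersection of two hyperplanes gives $\omega_Z\cong\Oo_Z(2)$; $\deg Z\le 4$ forces $Z\cong\PP^1$ with $\omega_Z\cong\Oo_Z(-2)$) that only $Z=\emptyset$ survives, i.e.\ $\Ee\cong\Oo_X(1)^{\oplus 2}$. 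This is where $(c_1,c_2)=(2,5)$ comes from --- there is nothing further to ``exhibit''.

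The real gap is in the nondegenerate case $h^0(\Ee(-1))=0$. Your plan is to combine the bound $d\le 17$ with Castelnuovo and a residual argument. But Castelnuovo in $\PP^4$ gives $\pi(11,4)=12$, $\pi(12,4)=15$, etc., so for a \emph{connected} nondegenerate $C$ with $g=d+1$ every degree $11\le d\le 17$ is permitted by the genus bound; Castelnuovo alone rules nothing out in that range. The mechanism you sketch (``residual line or conic forces $h^0(\Ii_C(2))$ too small'') is exactly what the Lemma preceding Section~3 does for $d\in\{18,19,20\}$, but it does not extend downward. What the paper actually does (Proposition~\ref{X5_2}) is a base-locus dichotomy: take three general quadrics $B_1,B_2,B_3\subset\PP^4$ through $C$. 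Either $B_1\cap B_2\cap B_3$ is a curve --- then it has degree $8$, so $d\le 7$ after excluding the complete-intersection case, and now Castelnuovo in $\PP^2$, $\PP^3$, $\PP^4$ forces each component to be a plane quintic, which contradicts nondegeneracy --- or $B_1\cap B_2\cap B_3$ contains a surface $S$ with $\deg S\le 3$ and $(S\cap X)_{\mathrm{red}}=C$. The second branch requires a case-by-case analysis of such $S$ (two planes meeting in a point, three planes, a quadric plus a plane, a smooth cubic scroll $F_1\hookrightarrow\PP^4$, or a cone over a twisted cubic), each eliminated via explicit computations in $\Pic(F_1)$ or $\Pic(F_3)$; only the two-planes case survives and gives Example~\ref{c1}. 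None of this is visible from your Castelnuovo-plus-residuals outline, and without it the degrees $11\le d\le 17$ remain open.

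Two small corrections: in Example~\ref{c1} the two plane quintics $C_1,C_2$ are \emph{disjoint} (the intersection point $P=U_1\cap U_2$ is chosen off $X$), not meeting at a point; and the phrase ``$d+1\le\pi(d,4)$ bounds $d$ from below'' is correct but then gives no upper constraint, which is precisely why the surface analysis is unavoidable.
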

\begin{proof}
By Proposition \ref{prop1} we can assume that $H^0(\Ee(-c_1))=0$. If $c_1(\Ee)=1$, then $\Ee$ fits into the sequence
$$0\to \Oo_X \to \Ee \to \Ii_C(1) \to 0, $$
where $C$ is a smooth curve. If $h^0(\Ii_C(1))\geq 3$, then we have $C \cong \PP^1$ a line, which is not possible since $\omega_C \cong \Oo_C(1)$. Thus we have $h^0(\Ii_C(1))=2$ and in particular $C$ is a complete intersection of two hyperplane sections of $X$ since $\Ii_C(1)$ is spanned. But then we would have $\omega_C \cong \Oo_C(2)$ from the minimal free resolution of $\Ii_C$, a contradiction.

Now let us assume that $c_1(\Ee)=2$. If $h^0(\Ee(-1))\ne 0$, then any nonzero section of $\Ee(-1)$ induces an exact sequence
\begin{equation}\label{eqaa1}
0 \to \Oo _X(1) \to \Ee \to \Ii _Z(1)\to 0.
\end{equation}
Since we have $\Ee \cong \Oo_X(1)^{\oplus 2}$ for $Z=\emptyset$, we may assume $Z\ne \emptyset$. In particular $Z$ is a locally complete intersection subscheme of codimension $2$ with $\omega _Z \cong \Oo _Z$ (not necessarily smooth nor connected). Note that $\Ii _Z(1)$ is spanned and so $\deg (Z)\leq 5$. Conversely any $Z$ with $\omega_Z \cong \Oo_Z$ gives a vector bundle $\Ee$ fitting into the sequence (\ref{eqaa1}). Since $h^1(\Oo _X(1)) =0$, $\Ee$ is spanned if
and only if $\Ii _Z(1)$ is spanned. If $\deg (Z)=5$, then $Z$ is the complete intersection of two hyperplane sections of $X$ and we have $h^0(\Ii_Z(1))=2$. Again from the minimal free resolution of $\Ii_Z$, we obtain $\omega_Z \cong \Oo_Z(2)$, a contradiction. If $\deg (Z)\leq 4$, then we have $h^0(\Ii_Z(1))\geq 3$. In particular we have $Z\cong \PP^1$ a line and $h^0(\Ii_Z(1))=3$, which is not possible since $\omega_Z \cong \Oo_Z$.

If $h^0(\Ee(-1))=0$, then we have $\Ee$ as in Example \ref{c1} by the proof of Proposition \ref{X5_2} and by Remark \ref{remc1} we have $\Ee \cong \pi_P^*(\Nn_{\PP^3}(1))$ for a point $P\in \PP^4 \setminus X_5$.
\end{proof}

Let $\Ee$ be a globally generated vector bundle of rank $2$ on a smooth quintic hypersurface $X=X_5\subset \PP^4$ with $c_1(\Ee)=2$ and assume that $H^0(\Ee(-1))=0$, i.e. $h^0(\Ii_C(1))=0$ in the sequence
$$0\to \Oo_X \to \Ee \to \Ii_C(2) \to 0.$$
Let $C=\sqcup C_i$ where each $C_i$ is a smooth and connected curve of degree $d_i$ and genus $g_i=d_i+1$ since $\omega_C\cong \Oo_C(2)$. Since $$\chi(N_{C_i|\PP^4})=(4+1)d_i+(3-4)(g_i-1)=4d_i,$$
so we have $\chi(N_{C|\PP^4})=4d=4\deg (C)$. Note that we have $d\leq 4 \deg (X)=20$ and the equality is realized exactly when $C$ is the complete intersection of two quadric hypersurfaces of $X$, but it is impossible because it would give $\omega_C \cong \Oo_C(4)$ by adjunction.

\begin{proposition}\label{X5_2}
If $\Ee$ is a globally generated vector bundle of rank $2$ on $X_5$ with $c_1(\Ee)=2$ and $h^0(\Ee(-1))=0$, then we have $c_2(\Ee)=10$.
\end{proposition}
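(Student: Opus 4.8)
The plan is to analyze the curve $C$ appearing in the extension $0\to \Oo_X \to \Ee \to \Ii_C(2)\to 0$, exploiting the two standing facts: $\omega_C\cong \Oo_C(2)$ (so each connected component $C_i$ of degree $d_i$ has genus $g_i = d_i+1$, hence $\deg C = p_a(C) - \#\{\text{components}\}+\dots$), and $h^0(\Ii_C(1))=0$, which says $C$ is nondegenerate in $\PP^4$ (more precisely, lies on no hyperplane). The key numerical constraint is that $\Ii_C(2)$ is globally generated, so $C$ is contained in the base locus of the linear system of quadrics through it; combined with $\deg C = \deg\omega_C/2 \cdot (\text{something})$, this squeezes $\deg C$. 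I would first record that $\deg(C)\le 4\deg(X)-3 = 17$ by the Lemma proved earlier in the excerpt (the one bounding $\deg C \le 4\deg X - 3$ for $c_1=2$), and that each component is nondegenerate-in-its-span with the genus formula $g_i = d_i + 1$.

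Next I would run the Castelnuovo-type bound. For a connected component $C_i$ of degree $d_i$ spanning a linear subspace $\PP^{m_i}\subseteq\PP^4$, the genus satisfies $g_i \le \pi(d_i, m_i)$ (the Castelnuovo bound $\pi(d,n)$ referenced via \cite[Theorem 3.7]{he} in the excerpt). But $g_i = d_i+1$ forces $d_i+1 \le \pi(d_i,m_i)$, which for small $m_i$ (lines, conics, plane curves, curves in $\PP^3$ or $\PP^4$ of low degree) is very restrictive: e.g. a component in a plane must be a plane quintic ($d_i=5$, $g_i=6$), a component in $\PP^3$ has limited degree, and a nondegenerate component in $\PP^4$ is forced into a short list of degrees. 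I would then combine the per-component analysis with the global requirement $h^0(\Ii_C(1))=0$: the components together must span $\PP^4$, so they cannot all be small; and with the global generation of $\Ii_C(2)$, which rules out the configurations where a residual line or conic is forced to meet $C$ too heavily (this is exactly the style of argument in the $\deg C = 4\deg X - 1, -2$ cases of the earlier Lemma, which I would re-use/adapt to eliminate all degrees $11\le \deg C \le 17$, and similarly $6\le \deg C\le 9$, $1\le \deg C\le 4$, leaving $\deg C\in\{5,10\}$ and then $\deg C = 5$ excluded because a single plane quintic is degenerate while a degree-5 curve cannot otherwise have $\omega_C\cong\Oo_C(2)$ and span $\PP^4$).

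More concretely, the main line of the proof: (1) reduce to $C\ne\emptyset$ nondegenerate with $\omega_C\cong\Oo_C(2)$ and $\Ii_C(2)$ spanned, $\deg C\le 17$; (2) show $h^0(\Ii_C(2))\ge 6$ actually — or rather, bound $h^0(\Ii_C(2))$ from below using $\chi$ and the Calabi-Yau/GRR data of the excerpt — so that the quadrics through $C$ cut out exactly $C$ scheme-theoretically and the residual intersection argument applies; (3) for each putative value of $\deg C$, take two general $Y_1,Y_2\in|\Ii_C(2)|$, write $Y_1\cap Y_2 = C\cup D$ with $D$ a residual curve of degree $4\deg X - \deg C = 20 - \deg C$, use $\omega_{Y_1\cap Y_2}\cong \Oo_{Y_1\cap Y_2}(4)$ (adjunction on the CI of two quadric sections of $X$) together with the liaison/linkage formula relating $\omega_C$ and $\omega_D$ via the intersection $C\cap D$, to force $\deg(C\cap D)$ to be large enough that $D$ is forced into every quadric through $C$, contradicting $h^0(\Ii_C(2))$ being large enough for $\Ii_C(2)$ to be spanned — unless $D$ is empty or $C$ is a complete intersection, both of which are ruled out except at $\deg C = 10$. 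I expect the main obstacle to be handling the many intermediate degrees $11,\dots,17$ and $6,\dots,9$ cleanly: each requires a slightly different residual-curve bookkeeping (is $D$ a line, a union of lines, a conic, a twisted cubic, a degenerate structure?), and one must check in each case that a component of $D$ meeting $C$ in degree $>$ (its own degree adjusted by adjunction) really does force that component into the base locus of $|\Ii_C(2)|$ and thereby violates spannedness — the bookkeeping is routine in spirit but the number of cases and the need to rule out nonreduced residual structures (handled, as in the earlier Lemma, by Bertini applied to general $Y_1, Y_2$) is where the real work lies; I would also need the lower bound $h^0(\Ii_C(2))\ge 2$ sharpened in each case, which follows from the GRR computation \eqref{GRR} specialized to $u=5$, rank $2$, $c_1=2$, and $c_2 = \deg C$.
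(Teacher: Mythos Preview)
Your proposal has the right ingredients (Castelnuovo bounds on components, the constraints $\omega_C\cong\Oo_C(2)$ and nondegeneracy, liaison), but the central mechanism you propose---taking two general $Y_1,Y_2\in|\Ii_C(2)|$ on $X$ and analysing the residual $D$ of degree $20-\deg C$---is too weak to isolate $\deg C=10$. When $\deg C$ is in the middle range the residual $D$ has comparable degree (e.g.\ for the actual answer $\deg C=10$ one gets $\deg D=10$), and the linkage identity $\omega_C(C\cap D)\cong\Oo_C(4)$ gives no leverage to distinguish $10$ from $9,11,12,\dots$. You acknowledge this (``the main obstacle\ldots'') but never propose a way through, and your clause ``unless $D$ is empty or $C$ is a complete intersection, both of which are ruled out except at $\deg C=10$'' is not correct: at $\deg C=10$ neither holds.

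What the paper does instead is lift the problem to $\PP^4$. Since the restriction $H^0(\PP^4,\Ii_{C,\PP^4}(2))\to H^0(X,\Ii_C(2))$ is surjective, one takes three general quadric \emph{hypersurfaces} $B_1,B_2,B_3\subset\PP^4$ through $C$ and asks whether $B_1\cap B_2\cap B_3$ is a curve or contains a surface $S$. If it is a curve, it has degree $8$, so $\deg C\le 8$ and your Castelnuovo analysis does finish this branch (only plane quintics survive, forcing $h^0(\Ee(-1))\ne0$). If it contains a surface $S$, then $S$ is reduced of degree $\le 3$ with $(S\cap X)_{\mathrm{red}}=C$, and one runs through the short list of nondegenerate reduced surfaces of degree $\le 3$ in $\PP^4$: two planes meeting at a point (this is the surviving case, giving $\deg C=10$), three planes, a plane plus an irreducible quadric, or an integral cubic scroll/cone. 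The last three are eliminated by checking, via a hyperplane slice or explicit Hirzebruch-surface arithmetic, that $\Ii_S(2)$ fails to be spanned along $X$ or that the numerics $d=g-1$ are impossible. This surface-in-$\PP^4$ dichotomy is the missing idea; your two-quadric-on-$X$ scheme never sees it because $B_1\cap B_2$ is always a surface of degree $4$ and intersecting with $X$ gives a curve of degree $20$, which is no constraint.
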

\begin{proof}
Let $C_1,\dots ,C_s$ be the connected components of $C$. Set $g_i:= p_a(C_i)$ and $d_i:= \deg ({C})$. Since
$\omega _C\cong \Oo _C(2)$, we have $d_i =g_i-1$ for all $i$. For three general $A_1,A_2,A_3\in |\mathcal {I}_C(2)|$, let $B_i\subset \PP^4$ be the unique quadric hypersurface such that $B_i\cap X =A_i$ as a scheme for $i=1,2,3$. Since $\Ii _C(2)$ is globally generated, we may assume that $A_1\cap A_2 = C\cup T$ where $T$ is a curve and $C\cup T$ contains only finitely many singular points, all of them contained in $C$ by the Bertini theorem. So  $B_1\cap B_2$ is a reduced surface of degree $4$ and the same is true for $B_1\cap B_3$ and $B_2\cap B_3$. Set $ A_1\cap A_3=C \cup T'$ and $A_2\cap A_3=C\cup T''$. For a general $A_3$ the curves $T, T', T''$ have no common component.

\quad{(a)} Assume that $B_1\cap B_2\cap B_3$ contains no surface, except the case giving Example \ref{c1}. In this case $B_1\cap B_2\cap B_3$ is a complete intersection curve of degree $8$ containing $C$. Therefore $\deg ( C) \le 8$ with equality when $C$ is a complete intersection of $3$ quadric hypersurfaces of $\PP^4$. But the equality is impossible, because it would give $\omega _C \cong \Oo _C(1)$ and so we have $d\le 7$. If $C_i$ is a plane curve, then $d_i=5$ and $g_i=6$. No component $C_i$ spans $\PP^3$, because $\pi (x,3) = x-3$ if $3 \le x \le 5$, $\pi (6,3) =4$ and $\pi (7,3) =6$ and hence we would have $d_i<g_i-1$. No component $C_i$ spans $\PP ^4$, because $\pi (x,4) =x-4$ if $4 \le 7$ and hence we would have $d_i<g_i-1$. The only possibility is that $C$ is connected and contain, but it implies $H^0(\Ee(-1)) \ne 0$, contradicting our assumption.

Therefore it is sufficient to prove that $B_1\cap B_2\cap B_3$ contains no surface, unless we are as in Example \ref{c1}.

\quad{(b)} Assume that $B_1\cap B_2\cap B_3$ is the union of the surface $S$ plus something else with lower dimension. Since $B_1\cap B_2$ is a reduced degree $4$ surface, we have $\deg (S) \le 3$ and $S$ is reduced. Since $\Ii _C(2)$ is spanned and $A_1,A_2,A_3$ are general, $T\cap T'$, $T\cap T''$ and $T\cap T''$ are finite and so $(S\cap X)_{red} = C$, using ampleness of the $X$ in $\PP^4$. Since $H^0(\Ee(-1)) =0$, $C$ spans $\PP ^4$ and so we get that $S$ is either

\begin{itemize}
\item the union of two planes with a single intersection point,
\item the union of $3$ planes $M_1\cup M_2\cup M_3$ (not in a $\PP^3$, but the planes may intersect),
\item  the union of an irreducible quadric surface $Q$ and a plane not in the $\PP^3$ spanned by $Q$, or
\item an integral non-degenerate surface of degree $3$ in $\PP^4$.
\end{itemize}

Note that in the first case $C$ is the disjoint union of two quintic plane curves, which is in Example \ref{c1}. Now assume that $C$ is not as in Example \ref{c1}, but that $S$ contains a plane $M$. We get that $C$ has a connected component $M\cap X$, which is a smooth plane quintic.

\quad{(b1)} If $S = M_1\cup M_2\cup M_3$, then we get that $C=C_1\cup C_2 \cup C_3$ is the disjoint union of $3$ plane quintics $C_i=M_i\cap X$. Therefore $\Ii _C(2)$ is spanned if and only if $\Ii _{M_1\cup M_2\cup M_3,\PP ^4}(2)$ is spanned at the points of $X$. If $M_1\cap M_2$ is a line, then $C_1\cap C_2 =X\cap (M_1\cap M_2)\ne \emptyset$. Thus we have $M_i\cap M_j =\emptyset$ or a point for each $i\ne j$. In all cases for a general hyperplane $H\subset \PP^4$ the scheme $H\cap (M_1\cup M_2\cup M_2)$ is a disjoint union of $3$ lines and so it is not scheme-theoretically cut out by quadrics at all points of $X\cap H$. Therefore $\Ii _{M_1\cup M_2\cup M_3,\PP ^4}(2)$ is not globally generated along $X$.

\quad{(b2)} Now assume $S = Q\cup M$ with $Q$ an irreducible quadric and $M$ a plane. Since $C$ is smooth, we get that the scheme $M\cap Q$ is finite. So $S\cap H$ is a disjoint union of a line $L$ and a smooth conic $D$ for a general hyperplane $H\subset \PP^4$. We claim that each quadric surface $Q'\subset H$ containing $H\cap S$ is the union of the plane $\langle D\rangle$ and a plane containing $L$. The claim is true, because $L\cap \langle D\rangle$ is a point not in $\langle D\rangle$, and so $S\cap H$ is not cut out by quadrics at each point of $X\cap \langle D\rangle$, a contradiction.

\quad{(b3)} Now assume that $S$ is an integral and non-degenerate degree 3 surface and then $S$ is either a smooth scroll or a cone over the rational normal curve of $\PP^3$.

\quad{(b3.1)} Assume that $S$ is smooth.  Let $F_1$ be the Hirzebruch surface with minimal self-intersection $-1$, i.e. the blow-up of $\PP^2$ at one point. We have $\Pic (F_1)\cong \mathbb {Z}^{\oplus 2}=\ZZ \langle h,f\rangle$, where $h$ is the section with negative self-intersection and $f$ is a fiber of the ruling of $F_1$. Note that $h^2=-1$, $h\cdot f=1$, $f^2 =0$ and $\omega _{F_1} \cong \Oo _{F_1}(-2h-3f)$. $S$ is obtained by the complete embedding $|h+2f|$. Each linear system $|ah+bf|$ with $b> a$, is very ample and it corresponds to curves of degree $a+b$ with $\Oo _C((a-2)h+(b-3)f)\cong \omega _C$; in the case $b=-a$ we also get a smooth curve with the same degree and genus. Except disjoint unions of lines (which do not arise as components of $C$, because $\omega _C \cong \Oo _C(2)$) all smooth curves in $S$ are connected. Setting $d:= \deg ( C)$ and $\mu \ge 1$ the multiplicity of $X\cap S$ at a general point of $S$, we have $\mu d = 15$ and so we have $(\mu, d)=(1,15)$ or $(3,5)$. We may exclude the latter case, because $\pi (5,4)=1$ and an elliptic or rational curve does not satisfy $\omega _C \cong \Oo _C(2)$. Assuming $d = 15$, i.e. $a+b = 15$, we have
\begin{align*}
30 = 2g-2 &= ((a-2)h+(b-3)f))\cdot (ah+bf)\\
                 &= 2ab-a-2b-a^2\\
                 &=-30a^2+31a-30
\end{align*}
and it has no integral solution.

\quad{(b3.2)} Assume that $S$ is not smooth, i.e. it is a cone over a rational normal curve in $\PP^3$. For the Hirzebruch surface $F_3$, we have $\Pic (F_3)\cong \ZZ\langle h,f \rangle$ with $h^2=-3$, $h\cdot f = 1$, $f^2=0$, $\omega _{F_3} \cong \Oo _{F_3}(-2h-5f)$ and $S$ is obtained by the linear system $|h+3f|$, which contracts $h$ to the vertex $O$ of the cone $S$; to get a smooth divisor $C$ on $S$ we need $C\in |ah+bf|$, $a>0$, $b\ge 3a$, with either $b =3a$ or $b=3a+1$. If $C$ has $s\ge 2$ connected components $C_i\in |a_ih+b_if|$ for $1 \le i\le s$ with $a_1+\cdots +a_s=a$ and
$b_1+\cdots +b_s=b$, then we again have $3a_i \le b_i \le 3a_i+1$ for all $i$. Since $C_i$ is not a single line, so we have $a_i>0$ for all $i$. Note that for $c>0$ and $d>0$, we have
\begin{align*}
(ch+(3c+1)f)\cdot (dh+(3d+1)f)&=3cd+c+d>0\\
(ch+(3c+1)f)\cdot (dh+3df)&=(3c+1)d>0\\
(ch+3cf)\cdot (dh+3df) &=3cd>0.
\end{align*}
Since $C_i\cap C_j =\emptyset$ for all $i\ne j$, we get a contradiction and so $C$ must be connected.
Since $C$ is connected, the scheme $X\cap S$ has the same multiplicity $\mu$. From $\mu d=15$, we get $(\mu ,d) =(1,15)$ or $(3,5)$. As in (b3.1) we exclude the latter case. Now we have $15=d = (ah+bf)\cdot (h+3f) = b$ and it implies $a=5$ since $3a \le b \le 3a+1$. We have $\omega _C\cong \Oo _C(3h+10f)$ and hence
$2g-2 = 50$, i.e. $g=26$, a contradiction.
\end{proof}


\subsection{Case of Higher Rank}$ $

Let $\Ee$ be a globally generated vector bundle of rank $r>3$ on $X$. We know that it fits into an exact sequence
\begin{equation}\label{eqa7}
0 \to \Oo _X^{\oplus(r-3)} \to \Ee \to \Ff \to 0
\end{equation}
where $\Ff$ is a globally generated vector bundle of rank $3$ on $X$ with $c_i(\Ff)=c_i(\Ee )$, $i=1,2,3$.
Conversely, since $h^1(\Oo _X)=0$, if $\Ff$ is a rank $3$ globally generated vector bundle and $\Ee$ is any coherent sheaf fitting into the sequence (\ref{eqn1}), then $\Ee$ is a rank $r$ globally generated vector bundle with
$c_i(\Ee)=c_i(\Ff)$, $i=1,2,3$, and $h^0(\Ee )=h^0(\Ff)+r-3$. This does not give us a complete classification, but only a very rough one unless $h^1(\Ff^\vee )=0$; in this case the sequence (\ref{eqa7}) splits and hence $\Ee \cong \Oo _X^{\oplus (r-3)}\oplus \Ff$.

\begin{proposition}
Let $\Ee$ be a globally generated vector bundle of rank $r\ge 3$ on $X=X_5$ with $c_1(\Ee)=1$ and no trivial factor. Then we have either
\begin{enumerate}
\item $\Ee \cong T\PP^4(-1)_{\vert_X}$, or
\item $\Ee\cong\pi_P^* (T_{\PP^3}(-1))$, where $\pi_P : X_5 \to \PP^3$ is a linear projection from a point $P\in \PP^4 \setminus X_5$. 
\end{enumerate}
In particular there is no such bundles with $r\ge 5$.
\end{proposition}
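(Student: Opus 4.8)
The plan is to run the Hartshorne--Serre machinery of Section~2 with $\Rr=\Oo_X(1)$ and then determine, in turn, the associated curve and then the bundle. Since $\Pic(X)\cong\ZZ$ is generated by $\Oo_X(1)$, the bundle $\Oo_X(1)$ trivially has property $\diamond$, so by \cite[Theorem~1(ii)]{Arrondo} (and (\ref{eqn1})) $\Ee$ sits in
\[
0\to\Oo_X^{\oplus(r-1)}\to\Ee\to\Ii_C(1)\to 0,
\]
where $C\subset X$ is a smooth curve with $\Ii_C(1)$ globally generated and $\omega_C\otimes\Oo_C(-1)$ globally generated, and the absence of a trivial factor means the left-hand map is a subbundle inclusion off $C$. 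If $C=\emptyset$ the sequence forces $\Ee\cong\Oo_X^{\oplus(r-1)}\oplus\Oo_X(1)$, which has a trivial factor; hence $C\neq\emptyset$.

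The next step is to identify $C$. Since $X=X_5$ and $h^0(\Oo_{\PP^4}(-4))=h^1(\Oo_{\PP^4}(-4))=0$, the global sections of $\Ii_C(1)$ are exactly the hyperplanes of $\PP^4$ containing $C$, and the base locus of that linear system is the linear span $\langle C\rangle$; global generation of $\Ii_C(1)$ at a point $p\in X\setminus C$ requires a hyperplane through $C$ missing $p$, so it forces $\langle C\rangle\cap X=C$ set-theoretically. Running through $\langle C\rangle=\PP^k$: for $k=4$ there are no sections; for $k=3$, $\langle C\rangle\cap X$ is a quintic surface (as $\PP^3\not\subset X$); for $k=1$, $\langle C\rangle\cap X$ is either finite or, if the line lies on $X$, a line $\ell$ with $\omega_\ell\otimes\Oo_\ell(-1)\cong\Oo_\ell(-3)$ not globally generated; for $k\le 0$ it is not a curve. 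Only $k=2$ survives, and then (since $\PP^2\not\subset X$) $C=\PP^2\cap X$ is a smooth plane quintic with $\omega_C\cong\Oo_C(2)$. Consequently $c_2(\Ee)=\deg C=5$, and since $h^0(\Ii_C(1))=2$ (the hyperplanes through a fixed plane form a pencil) and $h^1(\Oo_X)=0$, the displayed sequence gives $h^0(\Ee)=r+1$.

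Finally I would read $\Ee$ off its evaluation map. As $\Ee$ is globally generated with $h^0(\Ee)=r+1$, the kernel of $H^0(\Ee)\otimes\Oo_X\to\Ee$ is a line bundle of determinant $\Oo_X(-1)$, giving
\[
0\to\Oo_X(-1)\xrightarrow{(t_0,\dots,t_r)}\Oo_X^{\oplus(r+1)}\to\Ee\to 0,
\]
with $t_0,\dots,t_r\in H^0(\Oo_X(1))\cong H^0(\PP^4,\Oo_{\PP^4}(1))$; this is consistent because $h^1(\Oo_X(-1))=0$ by Lemma~\ref{a0}. If the $t_i$ were linearly dependent, a coordinate change on $\Oo_X^{\oplus(r+1)}$ would split a trivial summand off $\Ee$; since $\Ee$ has no trivial factor, $t_0,\dots,t_r$ are linearly independent, so $r+1\le 5$. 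Moreover local freeness of $\Ee$ forces the common zero locus $Z(t_0,\dots,t_r)\subset\PP^4$, a linear space of dimension $3-r$, to be disjoint from $X$, which is automatic for $r\ge 3$. If $r=4$, the $t_i$ form a basis of $H^0(\PP^4,\Oo_{\PP^4}(1))$ and the sequence is the restriction to $X$ of the Euler sequence, so $\Ee\cong T\PP^4(-1)_{\vert_X}$; if $r=3$, then $Z(t_0,t_1,t_2,t_3)=\{P\}$ with $P\notin X$, the $t_i$ define the linear projection $\pi_P\colon X\to\PP^3$, and pulling back the Euler sequence of $\PP^3$ gives $\Ee\cong\pi_P^*(T_{\PP^3}(-1))$. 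In particular $r\le 4$, so no such bundle exists with $r\ge 5$.

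The main obstacle is the second step --- showing the only admissible curve is a plane quintic --- whose crux is that the base scheme of the hyperplanes through $C$ is the genuine linear span $\langle C\rangle$, so global generation of $\Ii_C(1)$ is equivalent to $\langle C\rangle\cap X=C$, and then using that $X$ is a hypersurface immediately confines $\langle C\rangle$ to a plane. The other point that must be handled with care is the equivalence ``$\Ee$ has no trivial factor $\iff t_0,\dots,t_r$ are linearly independent'', since this is exactly what produces the bound $r\le 4$ and hence the nonexistence statement for $r\ge 5$.
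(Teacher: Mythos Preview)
Your argument is correct. The identification of $C$ via the linear span is essentially the same as the paper's (the paper phrases it as $h^0(\Ii_C(1))\ge 3\Rightarrow C\cong\PP^1$, which is just $\dim\langle C\rangle\le 1$), and the conclusion that $C$ is a plane quintic is the same.

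Where your route genuinely diverges is in the passage from $C$ to the bound on $r$. The paper first treats the rank--$3$ case, obtaining $0\to\Oo_X(-1)\to\Oo_X^{\oplus 4}\to\Ff\to 0$ from the Koszul resolution of the complete intersection $\Ii_C$, and then for $r\ge 4$ writes $\Ee$ as an extension $0\to\Oo_X^{\oplus(r-3)}\to\Ee\to\Ff\to 0$; the bound $r\le 4$ comes from $h^1(\Ff^\vee)=1$ and the requirement that the $(r-3)$ extension classes be linearly independent. You instead go straight to the evaluation sequence $0\to\Oo_X(-1)\to\Oo_X^{\oplus(r+1)}\to\Ee\to 0$ for arbitrary $r$, and read off $r+1\le h^0(\Oo_X(1))=5$ from the linear independence of the $t_i$. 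This is cleaner: it avoids the two-step reduction through rank $3$, and the equivalence ``no trivial factor $\iff$ the $t_i$ are independent'' (which you correctly justify via $h^0(\Ee^\vee)=\ker\bigl(\mathbb{C}^{r+1}\to H^0(\Oo_X(1))\bigr)$) makes the bound immediate. The paper's extension-class computation is of course the same dimension count in disguise, since $h^1(\Ff^\vee)=\dim\mathrm{coker}\bigl(\mathbb{C}^4\to H^0(\Oo_X(1))\bigr)=1$.

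Two small remarks. Your aside that ``the absence of a trivial factor means the left-hand map is a subbundle inclusion off $C$'' is not the right formulation (that inclusion is automatic, since $\Ii_C(1)$ is locally free off $C$); the correct statement, which you do not actually use, is that the induced map $\mathbb{C}^{r-1}\to H^0(\Ee)$ is injective. And in the $k=2$ step you might note explicitly that $C$ must be the \emph{full} scheme $\langle C\rangle\cap X$: since the latter is a degree--$5$ divisor on $\PP^2$ with reduction the smooth curve $C$, comparing degrees forces $C=\langle C\rangle\cap X$ (a reducible $C$ is impossible because two plane curves always meet).
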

\begin{proof}
Let $\Ff$ be a globally generated vector bundle of rank $3$ on $X=X_{5}$ with $c_1(\Ee)=1$ and then it fits into the sequence
$$0 \to \Oo_X^{\oplus 2} \to \Ee \to \Ii_C(1) \to 0$$
where $C$ is a smooth curve. If $C=\emptyset$, then we have $\Ff \cong \Oo_X^{\oplus 2} \oplus \Oo_X(1)$ and thus we can assume $C\ne \emptyset$. For each connected component $C'$ of $C$, the bundle $\wedge^2 N_{C'|X} \otimes \Oo_{C'}(-1)\cong \omega_{C'}(-1)$ is globally generated. Since $\Ii_C(1)$ is spanned, so we have $h^0(\Ii_C(1))\geq 2$. If $h^0(\Ii_C(1))\ge 3$, then we have $C \cong \PP^1$ a line and $h^0(\Ii_C(1))=3$. In particular, $\omega_C(-1)\cong \Oo_C(-3)$ is not globally generated. Thus we have $h^0(\Ii_C(1))=2$, i.e. $C$ is the complete intersection of two hyperplane sections of $X$. From the minimal free resolution of $\Ii_C$ we have
\begin{equation}\label{c1=1}
0\to \Oo_X(-1) \to \Oo_X^{\oplus 4} \to \Ff \to 0.
\end{equation}
Indeed, for $4$ general elements of $H^0(\Oo_X(1))$, the corresponding injection $\Oo_X(-1) \to \Oo_X^{\oplus 4}$ gives the cokernel $\Ff$ which is locally free since their common zero $L$ a point is not contained in $X$. In case of $r\ge 4$, the bundle $\Ee$ can fit into the sequence (\ref{eqa7}).
$\Ee$ has $\Oo_X$ as a direct summand if and only if the extension (\ref{eqa7}) is induced by linearly dependent $e_1, \ldots, e_{r-3} \in H^1(\Ff^\vee)$. Since $\Ff$ fits into the sequence (\ref{c1=1})and so $h^1(\Ff^\vee)=1$, so the maximum possible rank of $\Ee$ with no trivial factor is $r=4$ and it fits into the equence
$$0\to \Oo_X(-1) \to \Oo_X^{\oplus 5} \to \Ee \to 0.$$
Thus we have $\Ee \cong T\PP^4(-1)_{\vert_X}$. It also implies that any indecomposable and globally generated vector bundle $\Ff$ of rank $3$ fitting into the sequence (\ref{c1=1} is a quotient of $T\PP^4(-1)_{\vert_X}$ by $\Oo_X$  so $\Ff\cong\pi_P^* (T_{\PP^3}(-1))$.
\end{proof}

Now let us consider the case of $c_1(\Ee)=2$.
\begin{remark}\label{rem2}
By Theorem \ref{thm1}, the only indecomposable vector bundle in the list is fitted into the sequence (\ref{eqn1}) with $(r,c_1)=(2,2)$ and $C=C_1 \cup C_2$ the disjoint union of two plane quintics in planes $U_1, U_2$ respectively. Note that $\dim \Ext^1 (\Ii_C(2), \Oo_X)=h^2(\Ii_C (2))=h^1(\Oo_C(2))=h^0(\Oo_C)=2$ and so any indecomposable bundle $\Ee$ of rank $r\ge 3$ fitting into the sequence (\ref{eqn1}) with the same $C$ must have rank $r=3$ and it is determined by the choice of $U=U_1\cup U_2$. Note that the quotients of $\Ee$ by $\Oo_{X}$ are the bundles in Example \ref{c1}. Since $\Nn_{\PP^3}(1)$ is the quotient of $\Omega_{\PP^3}(2)$ by $\Oo_{\PP^3}$, so we have $\Ee \cong \pi_P^*(\Omega_{\PP^3}(2))$ due to the generalization of \cite[Theorem 1.1 and Remark 1.1.1]{hs}.
\end{remark}

\begin{proposition}\label{proppp}
Let $\Ee$ be a globally generated vector bundle of rank $r\ge 3$ without trivial factors on $X=X_5$ with $c_1(\Ee)=2$. Then we have either
\begin{enumerate}
\item $ 0\to \Oo_X(-2) \to \Oo_X^{\oplus (r+1)} \to \Ee \to 0$ with $3\le r\le 14$,
\item $0 \to \Oo_X(-1)^{\oplus 2} \to \Oo_X^{\oplus(r+2)} \to \Ee \to 0$ with $3\le r\le 8$,
\item $0\to \Oo_X(-1) \to \Oo_X^{\oplus r} \oplus \Oo_X(1) \to \Ee \to 0$ with $3\le r\le 5$,
\item $\Ee \cong \pi_P^* (\Omega_{\PP^3}(2))$, where $\pi_P : X_5 \to \PP^3$ is a linear projection from a point $P\in \PP^4 \setminus X_5$.
\end{enumerate}
Indeed, in $(2)$ and $(3)$, we have
\begin{align*}
(2)\Rightarrow &\left\{
                                           \begin{array}{ll}
                                             \Ee\cong \left[ T\PP^4(-1)_{\vert_X}\right]^{\oplus2}, & \hbox{if $r=8$;}\\                                           
                                             \Ee \text{ is a quotient of it by a trivial bundle}, & \hbox{if $3\le r \le 7$.}
                                           \end{array}
                                         \right.\\
(3)\Rightarrow &\left\{
                                           \begin{array}{ll}
                                             \Ee\cong T\PP^4(-1)_{\vert_X}\oplus\Oo_X(1), & \hbox{if $r=5$;}\\                                           
                                             \Ee \text{ is a quotient of it by a trivial bundle}, & \hbox{if $3\le r \le 4$.}
                                           \end{array}
                                         \right.                    
\end{align*}
\end{proposition}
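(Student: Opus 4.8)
The plan is to reduce to the case $r=3$ by the extension (\ref{eqa7}), to settle that case by paralleling the arguments of Theorem \ref{thm1} and Proposition \ref{X5_2} — now allowing the associated curve $C$ to satisfy only the spannedness of $\omega_C\otimes\Oo_C(-2)$ rather than $\omega_C\cong\Oo_C(2)$ — and finally to bound $r$ by a cohomology computation on the Calabi--Yau threefold $X$. By Proposition \ref{prop1} we may assume $H^0(\Ee(-2))=0$. Choosing $r-3$ general sections of the globally generated bundle $\Ee$, the locus where they fail to span a rank $(r-3)$ subbundle has codimension $>3$ and hence is empty, so we obtain (\ref{eqa7}), namely $0\to\Oo_X^{\oplus(r-3)}\to\Ee\to\Ff\to 0$ with $\Ff$ globally generated of rank $3$ and $c_i(\Ff)=c_i(\Ee)$. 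Since the obstruction to lifting a splitting of a trivial quotient of $\Ff$ to one of $\Ee$ lies in $\Ext^1(\Oo_X,\Oo_X^{\oplus(r-3)})=H^1(\Oo_X)^{\oplus(r-3)}=0$, the bundle $\Ff$ has no trivial factor; conversely, as remarked after (\ref{eqa7}), each $\Ee$ obtained from such an $\Ff$ is globally generated with the same Chern classes and has no trivial factor exactly when the classes $e_1,\dots,e_{r-3}\in H^1(\Ff^\vee)$ defining the extension are linearly independent. As $\omega_X\cong\Oo_X$, Serre duality gives $h^1(\Ff^\vee)=h^2(\Ff)$, so $r\le 3+h^2(\Ff)$, and it remains to classify the bundles $\Ff$.

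Suppose first $h^0(\Ee(-1))>0$. A nonzero section of $\Ee(-1)$ gives $\Oo_X(1)\hookrightarrow\Ee$; if it vanishes nowhere, the quotient is a globally generated rank $2$ bundle with $c_1=1$, hence $\Oo_X\oplus\Oo_X(1)$ by Theorem \ref{thm1}, and $\Ext^1(\Oo_X\oplus\Oo_X(1),\Oo_X(1))=H^1(\Oo_X(1))\oplus H^1(\Oo_X)=0$ splits the sequence, contradicting that $\Ee$ has no trivial factor. So the section vanishes along a subscheme $W$ of codimension $\ge 2$; a Serre-type construction on $W$, together with the study of the quadrics of $\PP^4$ through $W$ as in \cite{SU} and Proposition \ref{X5_2}, forces a resolution of $\Ee$ by copies of $\Oo_X(-1)$, $\Oo_X$ and $\Oo_X(1)$, which a Chern class count identifies with $(3)$.

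Suppose next $h^0(\Ee(-1))=0$. Then Arrondo's correspondence \cite[Theorem 1]{Arrondo} gives $0\to\Oo_X^{\oplus 2}\to\Ee\to\Ii_C(2)\to 0$ with $C$ smooth, $\Ii_C(2)$ spanned, $\omega_C\otimes\Oo_C(-2)$ spanned, and $h^0(\Ii_C(1))=h^0(\Ee(-1))=0$, so $C$ is non-degenerate in $\PP^4$. Running the base-locus argument of Proposition \ref{X5_2} on three general quadrics through $C$, and using the Castelnuovo bounds $\pi(x,3),\pi(x,4)$ recalled earlier together with the inequality $\deg(\omega_{C_i})\ge 2\deg(C_i)$ on each connected component (forced by spannedness of $\omega_{C_i}\otimes\Oo_{C_i}(-2)$) and the non-degeneracy of $C$, exactly three curves survive: the complete intersection of $X$ with two quadrics of $\PP^4$ (for which $h^0(\Ii_C(2))=2$), a curve of degree $15$ on an integral cubic surface of $\PP^4$ (for which $h^0(\Ii_C(2))=3$), and the disjoint union of two plane quintics. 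In the last case Remark \ref{rem2} identifies $\Ee$ with $\pi_P^*(\Omega_{\PP^3}(2))$, giving $(4)$. In the first two, $h^0(\Ee)=2+h^0(\Ii_C(2))\in\{4,5\}$ and the kernel $\Kk$ of $H^0(\Ee)\otimes\Oo_X\to\Ee$ is a subbundle of a trivial bundle, so $\Kk^\vee$ is globally generated of rank $h^0(\Ee)-3\in\{1,2\}$ with $c_1=2$; since $c_2(\Ee)=\deg C$ determines $c_2(\Kk^\vee)$, Proposition \ref{prop1} and Theorem \ref{thm1} force $\Kk^\vee\cong\Oo_X(2)$ or $\Kk^\vee\cong\Oo_X(1)^{\oplus 2}$, hence the resolutions $(1)$ and $(2)$.

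For $\Ff$ with resolution $(1)$--$(3)$ or $\Ff\cong\pi_P^*(\Omega_{\PP^3}(2))$, a cohomology computation on $X$ — using that $X_5$ is arithmetically Cohen--Macaulay, so $h^1(\Oo_X(t))=h^2(\Oo_X(t))=0$ for all $t$ and $h^3(\Oo_X(t))=h^0(\Oo_X(-t))$, together with the injectivity of the maps defined by the chosen general forms — gives $h^2(\Ff)=11,5,2,0$ in the four cases, hence $r\le 14,8,5,3$; in $(4)$ this forces $r=3$ because $h^2(\pi_P^*\Omega_{\PP^3}(2))=0$ by the projection formula for $\pi_P$ and Bott vanishing on $\PP^3$. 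Lastly, the resolution $0\to\Oo_X(-1)^{\oplus 2}\to\Oo_X^{\oplus 5}\to\Ff\to 0$ realizes $\Ff$ as a quotient of $[T\PP^4(-1)_{\vert_X}]^{\oplus 2}$ by two general trivial sections, while $0\to\Oo_X(-1)\to\Oo_X^{\oplus 3}\oplus\Oo_X(1)\to\Ff\to 0$ realizes it as a quotient of $T\PP^4(-1)_{\vert_X}\oplus\Oo_X(1)$ by two trivial sections; feeding these into (\ref{eqa7}) gives the displayed descriptions of $(2)$ and $(3)$. The hardest step, I expect, is the case $h^0(\Ee(-1))=0$ in rank $3$: pushing the base-locus analysis of Proposition \ref{X5_2} through under the weaker hypothesis on $\omega_C$ (which admits the new curves behind $(1)$ and $(2)$), together with the treatment of the non-locally-free quotient when $h^0(\Ee(-1))>0$.
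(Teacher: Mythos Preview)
Your overall strategy matches the paper's: reduce to the associated curve $C$ via $0\to\Oo_X^{\oplus(r-1)}\to\Ee\to\Ii_C(2)\to 0$, analyse the base locus of the quadrics of $\PP^4$ through $C$ exactly as in Proposition \ref{X5_2} but with the weaker constraint that $\omega_{C_i}(-2)$ be spanned, and bound $r$ by $h^1(\Ff^\vee)=h^2(\Ff)$. Your derivation of the resolutions (1) and (2) via the dual of the evaluation kernel $\Kk^\vee$ and Theorem \ref{thm1} is a genuinely nicer variant than the paper's, which instead reads the resolutions off from the minimal free resolution of $\Ii_C$ once $C$ is identified (a complete intersection of two quadrics in $X$ for (1), a curve in $|5h+15f|$ on the cubic cone for (2)). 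Both routes land on the same three non-degenerate curves, and your cohomology bounds $h^2(\Ff)=11,5,2,0$ are correct.

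The one place where your argument is not yet a proof is the case $h^0(\Ee(-1))>0$. Passing to a section of $\Ee(-1)$ on the rank $3$ bundle leads to a non-locally-free quotient supported on a subscheme $W$ that may well have codimension $3$ (points), and ``a Serre-type construction on $W$'' is not an argument there; you flag this yourself. The paper avoids this entirely: from the exact sequence one has $h^0(\Ee(-1))=h^0(\Ii_C(1))$, so this case is exactly ``$C$ is degenerate''. Then $C$ lies in $W:=X\cap H$ and in a quadric section, hence $d\le 10$; if $d=10$ then $C$ is the complete intersection of a hyperplane and a quadric in $X$, whose minimal free resolution gives (3), while $8\le d\le 9$ is ruled out by looking at the base locus $S'$ of $|\Ii_{C,H}(2)|$ inside $H$ (if $S'$ is a quadric surface then $C$ is already the $d=10$ complete intersection; if $S'$ contains a plane $M$ then $M\cap X$ is a plane quintic component of $C$, contradicting connectedness). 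Replacing your $W$-argument by this degeneracy-of-$C$ argument closes the gap with no extra work, and then your proof is complete and, with the $\Kk^\vee$ trick, arguably cleaner than the paper's.
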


\begin{proof}
Since $c_1(\Ee)=2$, so $\Ee$ fits into the sequence
$$0\to \Oo_X^{\oplus (r-1)} \to \Ee \to \Ii_C(2) \to 0. $$
If $C=\emptyset$, then we have $\Ee \cong \Oo_X^{\oplus (r-1)} \oplus \Oo_X(2)$ and so we can assume $C\ne \emptyset$. Then $C$ is a smooth curve of degree $d$ at most $4\deg (X)=20$ such that $\wedge^2 N_{C'|X} \otimes \Oo_{C'}(-2)\cong \omega_{C'}(-2)$ is globally generated for each component $C'$ of $C$. If $d=20$, i.e. $C$ is the complete intersection of two hypersurfaces of degree $2$ in $X$ and so $h^0(\Ii_C(2))=2$, then from the minimal free resolution of $\Ii_C$ we have
$$0\to \Oo_X(-2) \to \Oo_X^{\oplus (r+1)} \to \Ee \to 0.$$
Conversely any $r+1$ general section in $H^0(\Oo_X(2))$ whose common zero is empty, defines a locally free sheaf $\Ee$ of rank $r$. If $r=3$, we have $h^1(\Ee^\vee)=11$ and so the maximum possible rank $r$ is $14$.

Now assume $h^0(\Ii_C(2))\ge 3$ and as in the proof of Proposition \ref{X5_2}, let us choose the unique quadric hypersurface $B_i\subset \PP^4$ such that $B_i \cap X = A_i$ for three general $A_1, A_2, A_3 \in |\Ii_C(2)|$. Let $C = \sqcup _{i=1}^{s} C_i$, be a smooth curve with $s$ connected components. Set $d_i:= \deg (C_i)$ and $g_i:= p_a(C_i)$.
Since $\omega _C(-2)$ is spanned, we have $d_i\leq g_i-1$ for all $i$.

\quad{(a)} If $B_1 \cap B_2 \cap B_3$ contains no surface, then we have $\deg (C)\le 8$ with equality when $C$ is a complete intersection of $3$ quadric hypersurfaces of $\PP^4$. But the equality is impossible, because it would give $\omega_C(-2) \cong \Oo_C(-1)$ which is not globally generated. Thus we have $d \leq 7$. Let $r_i$ be
the dimension of the linear span of $C_i$.  For all integers $x\ge y\ge 2$ let $\pi (x,y)$ denote the maximal genus of a non-degenerate curve of degree $x$ in $\PP^y$ (\cite[Chapter III]{he}). We have $r_i\ne 4$, because for $4\le x \le 7$ a non-degenerate curve of $\PP^4$ with degree $x$
has at most genus $x-4$. We have $r_i\ne 3$, because $\pi (x,3) =x-3$ if $3\le x \le 5$, $\pi (6,3) =4$ and $\pi (7,3) =6$. If $r_i=2$, we have $d_i\geq 4$. Hence $s=1$ and $C$ is a plane curve
of degree $d\geq 4$. We have $d<6$, because $X_5$ contains no plane. Assume for the moment $d=4$ and call $\Pi \subset \PP^4$ the plane spanned
by $C$. We have $\Pi \cap X = C\cup T$ with $T$ a line. Since $\deg (T\cap C)=4$, $\Ii _C(2)$ is not globally generated, a contradiction.
Hence $C$ is a connected plane curve of degree $d\ge 5$. Since there is no plane contained in $X$, so $C$ is contained in a complete intersection of two hyperplane sections. From the minimal free resolution of $\Ii_C$ we have
$$0\to \Oo_X \to \Oo_X^{\oplus (r-1)} \oplus \Oo_X(1)^{\oplus 2} \to \Ee \to 0.$$
Thus we have $\Ee \cong \Oo_X^{\oplus (r-2)} \oplus \Oo_X(1)^{\oplus 2}$ which is decomposable.

\quad{(b)} Assume that $B_1 \cap B_2 \cap B_3$ is the union of the surface $S$ plus something else with lower dimension. Then $S$ is reduced with $\deg (S)\le 3$.

\quad{(b1)} If $H^0(\Ee(-1))=0$, then $C$ spans $\PP^4$ and so $S$ can be one of the $4$ types in (b) of the proof of Proposition \ref{X5_2}. Using the exact same argument, $S$ cannot be the union of $3$ planes or the union of an irreducible quadric surface $Q$ and a plane not in the $\PP^3$ spanned by $Q$. If $S$ is an integral and nondegenerate surface of degree $3$, then $S$ is either a smooth scroll or a cone over the rational normal curve of $\PP^3$. In the former case, using the notations in (b3.1) of Proposition \ref{X5_2}, we have $d=a+b=15$ and from the global generatedness of $\omega_C(-2)$ we have $a\ge 4$ and $30a^2-31a+60 \leq 0$, which is not possible. When $S$ is a cone over a rational normal curve in $\PP^3$, then using again the notations in (b3.2) of Proposition \ref{X5_2}, we get $(a,b)=(5,15)$, i.e. $C\in |5h+15f|$ connected. From the minimal free resolution of $\Ii_C$
$$0\to \Oo_X(-3)^{\oplus 2} \to \Oo_X(-2)^{\oplus 3} \to \Ii_C \to 0,$$
we obtain the sequence (2) in the assertion.

Assume now that $S$ is the union $U=U_1\cup U_2$ of two planes with a single intersection point. Let $C_i=U_i \cap X$ be a plane quintic and set $C=C_1 \cup C_2$. Then by Example \ref{c1}, the sheaf $\Ii_C(2)$ is spanned and a bundle fitting into the sequence (\ref{eqn1}) is also a globally generated vector bundle of rank $r$. By Remark \ref{rem2} we have $r=3$ and $\Ee \cong \pi_P^* (\Omega_{\PP^3}(2))$.

\quad{(b2)} Assume that $H^0(\Ee(-1))\ne0$, i.e. $C$ is degenerate. Thus $C$ is contained in the complete intersection of a hyperplane section $W:=X \cap H$ and a quadric hypersurface of $X$, and in particular we have $d\leq 10$. If $d=10$, then from the minimal free resolution of $\Ii_C$ we have the sequence (3) in the assertion. Assume $d\leq 9$ and then $C$ is connected. In particular we have $\dim (\langle C \rangle )=3$ and $8 \le d \le 9$. If $d=8$, we have $g=9$ since $\pi(8,3)=9$. Since $\Ii_C(2)$ is spanned, so for the base locus $S'$ of $\Ii_{C, H}(2)$, we have $C=S' \cap W$. We can assume that $S'$ is a surface since we would have $\deg (C)\leq 4$ if $S'$ is a curve. If $S'$ is a quadric surface, i.e. $h^0(W, \Ii_{C,W}(2))=1$, then $C$ is the complete intersection of a quadric and a hyperplane section in $X$, excluding the case. If $S'$ contains a plane $M$, then $C$ has a plane quintic $M\cap X$ as a component, a contradiction.
\end{proof}

\begin{remark}
The only decomposable bundles in Proposition \ref{proppp} arise in cases $(2)$ and $(3)$:
\begin{align*}
&\text{case }(2)~:~\left[T\PP^4(-1)_{\vert_X}\right]^{\oplus 2} ,~T\PP^4(-1)_{\vert_X}\oplus \pi_P^* (T_{\PP^3}(-1)),~\left[ \pi_P^* (T_{\PP^3}(-1))\right]^{\oplus 2}\\
&\text{case }(3)~:~T\PP^4(-1)_{\vert_X}\oplus \Oo_X(1),~\pi_P^* (T_{\PP^3}(-1))\oplus \Oo_X(1)
\end{align*}
\end{remark}

\section{CICY of codimension $2$}

In this section we consider the CICY of codimension $2$, i.e. $X=X_{2,4}$ or $X_{3,3}$. Let us start with the case of bundles with $c_1(\Ee)=1$.

\begin{proposition}\label{prop+++}
Let $\Ee$ be a globally generated vector bundle of rank $2$ on $X=X_{2,4}$ or $X_{3,3}$ with $c_1(\Ee)=1$. Then we have
\begin{enumerate}
\item $\Ee \cong \Oo_X\oplus \Oo_X(1)$ or
\item $X=X_{2,4}$ and $\Ee$ fits into the exact sequence
$$0\to \Oo_X \to \Ee \to \Ii_C(1) \to 0$$
where $C$ is a plane quartic.
\end{enumerate}
\end{proposition}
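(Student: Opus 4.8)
The plan is to split the analysis according to whether $H^0(\Ee(-1))$ vanishes. If $H^0(\Ee(-1))\ne 0$, then since $X$ is a smooth complete intersection threefold we have $\Pic(X)\cong\ZZ$ generated by $\Oo_X(1)$, so Proposition~\ref{prop1} (applied with $r=2$ and $c_1=1$) gives immediately $\Ee\cong\Oo_X\oplus\Oo_X(1)$, which is case~(1). Hence for the rest of the argument I would assume $H^0(\Ee(-1))=0$, and the goal becomes to show that then $X=X_{2,4}$ and $\Ee$ is as in~(2).

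First I would produce the associated curve. The line bundle $\Oo_X(1)$ has property~$\diamond$: it is globally generated and nontrivial, and if $\Oo_X(a)$ is a globally generated line bundle (so $a\ge 0$) with $H^0(\Oo_X(1-a))\ne 0$ (so $a\le 1$), then $a\in\{0,1\}$. Thus the Hartshorne--Serre construction (the sequence~\eqref{eqn1} with $r=2$ and $\Rr=\Oo_X(1)$, equivalently \cite[Theorem~1]{Arrondo}) yields an exact sequence
$$0\to\Oo_X\to\Ee\to\Ii_C(1)\to 0$$
with $C\subset X$ a smooth curve, $\Ii_C(1)$ globally generated, and $\omega_C\cong\Oo_X(1)|_C\cong\Oo_C(1)$ by adjunction, using $\omega_X\cong\Oo_X$. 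If $C=\emptyset$ the section of $\Ee$ is nowhere vanishing, and since a CICY threefold is arithmetically Cohen--Macaulay one has $h^1(\Oo_X(-1))=h^2(\Oo_X(1))=0$, so the sequence splits and $\Ee\cong\Oo_X\oplus\Oo_X(1)$, contradicting $H^0(\Ee(-1))=0$; hence $C\ne\emptyset$.

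The heart of the argument is to extract the geometry of $C$ from the global generation of $\Ii_C(1)$. Since $\Ii_C(1)$ is globally generated, $C$ is the scheme-theoretic base locus of the linear system $|\Ii_C(1)|\subseteq|\Oo_X(1)|$. Set $t:=h^0(\Ii_C(1))$; as $X$ is arithmetically Cohen--Macaulay, $H^0(\Oo_{\PP^5}(1))\to H^0(\Oo_X(1))$ is an isomorphism, so the $t$ independent hyperplane sections through $C$ lift to $t$ independent linear forms on $\PP^5$, whose common zero is a linear subspace $\Lambda\cong\PP^{5-t}$; thus $C=X\cap\Lambda$ scheme-theoretically. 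I would then eliminate all values of $t$ except $t=3$. If $t\le 1$, then $\Ii_C(1)$ is a nonzero globally generated sheaf with $h^0\le 1$, hence $\Ii_C(1)\cong\Oo_X$, forcing $C$ to be a hyperplane section and not a curve. If $t\ge 4$, then $\dim\Lambda\le 1$, and $X\cap\Lambda$ is a curve only if $\Lambda\cong\PP^1\subseteq X$, which makes $C$ a line with $\omega_C\cong\Oo_C(-2)\not\cong\Oo_C(1)$. If $t=2$, then $\Lambda\cong\PP^3$; since $C$ is a curve, neither of the two forms cutting out $X$ restricts to zero on $\Lambda$ and their restrictions share no common factor, so $C$ is the complete intersection in $\PP^3$ of these restrictions, of type $(2,4)$ or $(3,3)$, and adjunction in $\PP^3$ gives $\omega_C\cong\Oo_C(2)\not\cong\Oo_C(1)$. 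Hence $t=3$, $\Lambda$ is a plane $\Pi$, and $C=X\cap\Pi$ is a smooth plane curve; since a smooth plane curve of degree $d$ has $\omega_C\cong\Oo_C(d-3)$, the condition $\omega_C\cong\Oo_C(1)$ forces $d=4$, so $C$ is a plane quartic.

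Finally I would separate the two threefolds. On $X=X_{3,3}=U_3\cap U_3'$ the scheme $X\cap\Pi$ is the intersection of the two plane cubics $U_3\cap\Pi$ and $U_3'\cap\Pi$ (or equals $U_3'\cap\Pi$ itself when $\Pi\subseteq U_3$), so any one-dimensional part has degree at most $3$ and is never a smooth plane quartic; thus the case $H^0(\Ee(-1))=0$ cannot occur on $X_{3,3}$, leaving only~(1). On $X=X_{2,4}=U_2\cap U_4$ a smooth plane quartic does arise, precisely when $\Pi\subseteq U_2$ (so that $C=U_4\cap\Pi$), and this is case~(2). The only delicate point is the third step: when $\Lambda$ fails to meet $X$ transversally, $X\cap\Lambda$ could a priori be higher-dimensional, reducible, or non-reduced, and these possibilities are ruled out precisely by using that $C$ is a nonempty smooth (hence reduced and pure one-dimensional) curve.
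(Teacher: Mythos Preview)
Your proof is correct and follows the same overall strategy as the paper: reduce to $h^0(\Ee(-1))=0$, pass to the Hartshorne--Serre curve $C$ with $\omega_C\cong\Oo_C(1)$ and $\Ii_C(1)$ spanned, and then pin down $C$. The difference is in how the last step is organized. The paper first extracts numerical constraints---$\deg(C)$ is even, $p_a(C)=\deg(C)/2+1\ge 3$, and $\deg(C)\le\deg X$---and then treats $\deg(C)\in\{4,6,8\}$ case by case, eliminating $8$ (complete intersection, so $\omega_C\cong\Oo_C(2)$) and $6$ (canonically embedded in $\PP^3$, hence $h^0(\Ii_C(1))=2$ but not a complete intersection). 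You instead parametrize by $t=h^0(\Ii_C(1))$ and use global generation directly to get $C=X\cap\Lambda$ with $\Lambda\cong\PP^{5-t}$, which is a cleaner starting point: the paper's $\deg(C)=8$ and $\deg(C)=6$ cases collapse into your single case $t=2$, where $C=X\cap\PP^3$ automatically has $\omega_C\cong\Oo_C(2)$. Your argument for $X_{3,3}$ in the $t=3$ case is also slightly sharper than the paper's: rather than invoking that a smooth $X_{3,3}$ contains no plane, you observe that the ideal of a plane quartic in $\Pi$ has no degree-$3$ generators, forcing both cubic equations to vanish identically on $\Pi$ and hence $\Pi\subseteq X$, a contradiction. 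One small point: your handling of $t\le 1$ is a bit informal (a globally generated ideal sheaf twisted by $\Oo_X(1)$ with $h^0\le 1$ is not literally $\Oo_X$), but the conclusion is correct since a single linear form cannot generate $\Ii_C(1)$ along the nonempty curve $C$.
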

\begin{proof}
We can assume that $h^0(\Ee(-1))=0$ and so $\Ee$ fits into the sequence
\begin{equation}\label{eqnc1=1}
0\to \Oo_X \to \Ee \to \Ii_C(1) \to 0,
\end{equation}
where $C$ is a smooth curve on $C$ with $\omega_C\cong \Oo_C(1)$ and $c_2(\Ee)=\deg (C)$. In particular, we have $p_a(C)=\deg (C)/2+1\ge 3$. Since $C$ is contained in a complete intersection of two hyperplane sections of $X$ and so $\deg (C)\in \{4,6,8\}$.

First let us assume that $X=X_{2,4}$. If $\deg (C)=8$, then $C$ is a complete intersection of two hyperplane sections of $X$ and so we have $\omega_C \cong \Oo_C(2)$ from the minimal free resolution of $\Ii_C$, a contradiction. If $\deg (C)=6$, $C$ is a canonically embedded curve in $\PP^3$. In particular it cannot be contained in a plane and so $h^0(\Ii_C(1))=2$. But it is not possible since $\Ii_C(1)$ is spanned and $C$ is not a complete intersection. If $\deg (C)=4$, then $C$ is a plane quartic. Instead of showing the existence of such a curve, let us consider the intersection $C'$ of a plane $L\subset U_2$ with $U_4$. We have $h^0(\Ii_{C'}(1))=3$ and $C'$ is the scheme-theoretic intersection inside $U_4$. Thus $\Ii_{C'}(1)$ is spanned and it implies the existence of globally generated $\Ee$ in the sequence (\ref{eqnc1=1}). Moreover $\Ee$ is locally free, because $C'$ is a locally complete intersection and $\omega_{C'}\cong \Oo_{C'}(1)$.

In the case of $X=X_{3,3}$, we can follow verbatim the previous argument on $X_{2,4}$. Indeed the case of $\deg (C)=4$ does not occur, since a smooth $X_{3,3}$ contains no plane.
\end{proof}

\begin{remark}
A priori the proof above implies the existence of a smooth plane quartic in every $X_{2,4}\subset \PP^5$. In fact, we can show the existence directly for general $X_{2,4}$; for a plane $L\subset U_2$, we have $L\nsubseteq U_4$, because $X$ is smooth. Now if $X_{2,4}$ contains only finitely many lines, then we can take $L$ so that $L\cap U_4$ contains no lines. Thus we have a plane $L$ in $U_2$ such that $L \cap U_4$ is smooth. Dr. Kiryong Cheong pointed out that the bundle in (2) of Proposition \ref{prop+++} is the bundle $\mathrm{B}$ in \cite[Theorem 3.55]{thomas}.

\end{remark}

Now let us deal with the case $c_1(\Ee)=2$. Again we can assume that $H^0(\Ee(-2))=0$. If $H^0(\Ee(-1))\ne 0$, then any nonzero section of $\Ee(-1)$ induces an exact sequence
$$0 \to \Oo_X(1) \to \Ee \to \Ii_Z(1) \to 0. $$
If $Z=\emptyset$, then we have $\Ee \cong \Oo_X(1)^{\oplus 2}$ and so we may assume $Z\ne \emptyset$. In particular, $Z$ is a locally complete intersection subscheme of codimension $2$ with $\omega_Z \cong \Oo_Z$.

\begin{remark}
If $\Ee \not\cong \Oo _X(1)^{\oplus 2}$, then we have $h^0(\Ee(-1)) =1$, i.e. the bundle $\Ee$ is determined by the choice of $Z$ uniquely.
\end{remark}

\begin{lemma}\label{X_{3,3}1}
There exists a plane cubic curve on every $X_{3,3}$.
\end{lemma}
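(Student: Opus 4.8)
The plan is to produce the curve as $X\cap M$ for a suitable $2$-plane $M\subset\PP^5$. Write $X=X_{3,3}$ as the base locus of the pencil $\Lambda=\langle F_1,F_2\rangle$ of cubic forms on $\PP^5$. First I would record the elementary dictionary: for a $2$-plane $M\cong\PP^2$ one has $X\cap M=V(F_1|_M,F_2|_M)\subseteq M$, and this is a plane cubic curve exactly when $M$ lies on some member of the pencil without lying on $X$. Indeed, if $M\subseteq V(G)$ for some $G\in\Lambda$ and $M\not\subseteq V(G')$ for some other $G'\in\Lambda$, then $X=V(G)\cap V(G')$ (the base locus of a pencil is cut out by any two of its members), so $X\cap M=M\cap V(G')=V(G'|_M)$ is the zero scheme of the nonzero cubic form $G'|_M$, i.e. a plane cubic curve; conversely, if neither $F_i|_M$ vanishes and they are not proportional, then the greatest common divisor of $F_1|_M$ and $F_2|_M$ has degree less than $3$ and $X\cap M$ is not a curve of degree $3$.

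Thus everything reduces to finding a $2$-plane lying on some cubic in the pencil $\Lambda$. Here I would invoke the classical fact that the cubic fourfolds in $\PP^5$ containing a $2$-plane form a hypersurface $\Delta$ in the $\PP^{55}$ of all cubic fourfolds. One inequality is pure bookkeeping: the incidence variety $\{([G],M)\in\PP^{55}\times G(3,6):M\subseteq V(G)\}$ is a projective bundle over $G(3,6)$ with fibre the $\PP^{45}$ of cubics vanishing on a fixed plane (the restriction map to $H^0(\Oo_{\PP^2}(3))\cong\CC^{10}$ being surjective), hence irreducible of dimension $45+9=54$, so $\dim\Delta\le 54$; for the opposite inequality it suffices to exhibit one cubic fourfold containing only finitely many $2$-planes — for instance the Fermat cubic $\sum_{i=0}^{5}x_i^3$, which contains the plane $\{x_0+x_1=x_2+x_3=x_4+x_5=0\}$ — so that $\Delta$ has codimension exactly $1$. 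Since $\Lambda$ is a line in $\PP^{55}$ and a line always meets a hypersurface, some $G\in\Lambda$ contains a $2$-plane $M$.

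To finish I would split into two cases. If $M\not\subseteq X$, then the first step already gives that $X\cap M$ is a plane cubic curve. If $M\subseteq X$ — which does occur, since there are smooth $X_{3,3}$ containing a plane (take $\Lambda$ inside the $\PP^{45}$ of cubics through a fixed plane, whose general pencil still cuts out a smooth threefold) — then any smooth cubic curve contained in the plane $M$ lies on $X$, and we are done again. The step I expect to be the real obstacle is establishing that $\Delta$ has codimension exactly one, equivalently that a general cubic fourfold containing a plane contains only finitely many of them; the dimension count gives $\dim\Delta\le 54$ for free, and I would obtain the reverse either from the explicit Fermat example or by analysing the quadric surface bundle $\wt{V(G)}\to\PP^2$ obtained by projecting a cubic $V(G)$ from a plane on it — for general $G$ every fibre of this bundle has rank $\ge 3$, hence contains no $2$-plane, which rules out extra planes meeting the given one and pins down $\dim\Delta=54$. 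For use later in the Hartshorne–Serre construction I would also note that when $X$ contains no plane the curve obtained is exactly $Z=X\cap M$, for which $\Ii_Z(1)$ is globally generated along $X$.
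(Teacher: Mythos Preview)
Your approach is essentially identical to the paper's: both set up the incidence correspondence over the Grassmannian of planes to show that the cubic fourfolds containing a plane form a hypersurface $\Sigma\subset\PP^{55}$, then intersect the pencil $\PP(H^0(\Ii_{X_{3,3}}(3)))$ with $\Sigma$ to produce a cubic $T\supset M$ in the pencil, and finally take $Z=T'\cap M$ for another member $T'$ of the pencil.

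You are in fact more careful than the paper on the point you flag as ``the real obstacle'': the paper computes $45+9=54$ and simply asserts that $\Sigma$ is a hypersurface, whereas you supply the missing lower bound $\dim\Delta\ge 54$ via the Fermat example (or the quadric-bundle analysis). This is a genuine improvement in rigor.

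One correction: your parenthetical claim that there exist smooth $X_{3,3}$ containing a plane is false. By Grothendieck--Lefschetz, $\Pic(X_{3,3})\cong\ZZ$ generated by $\Oo_X(1)$ for any smooth $X_{3,3}$; a plane $M\subset X$ would be an effective divisor with $M\cdot H^2=1$, but every divisor in $|\Oo_X(k)|$ satisfies $D\cdot H^2=9k$, a contradiction. (The paper uses this elsewhere, in the proof of Proposition~\ref{prop+++}.) This error is harmless for your argument, since you dispose of the case $M\subseteq X$ anyway --- but you should drop the claim that it ``does occur''.
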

\begin{proof}
Let $\PP^{55}$ be the projective space of all cubic hypersurfaces of $\PP^5$ and $G(2,5)$ be the Grassmannian of all planes in $\PP^5$. For any plane $V\in G(2,5)$, the set of all cubics in $\PP^5$ containing $V$ is a projective space of dimension $45$. Since $\dim G(2,5) =9$ and the flat limit of a family of planes in a plane, we get that the set $\Sigma \subseteq \PP^{55}$ of all cubic hypersurfaces of $\PP^5$ containing at least one plane is a nonempty hypersurface of $\PP^{55}$. It implies that
any pencil of cubic hypersurfaces meets $\Sigma$ and so there is $T\in \PP (H^0(\PP^5,\mathcal {I}_{X_{3,3}}(3)))$ with $T$ containing a plane $V$. Take any cubic hypersurface $T'\subset \PP^5$ with $X_{3,3}=T\cap T'$. Then we get that $X_{3,3}$ contains a plane cubic $Z:= T'\cap V$.
\end{proof}

\begin{proposition}\label{prop4.5}
Let $\Ee$ be a globally generated vector bundle of rank $2$ without trivial factors on $X=X_{2,4}$ or $X_{3,3}$ with $c_1(\Ee)=2$ and $h^0(\Ee(-1))>0$. Then it fits into the sequence
$$0\to \Oo_X(1) \to \Ee \to \Ii_Z(1) \to 0$$
where $Z$ is a plane cubic curve.
\end{proposition}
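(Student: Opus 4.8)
The plan is to force the section-associated curve $Z$ by playing the constraint $\omega_Z\cong\Oo_Z$ against the global generation of $\Ii_Z(1)$. By the remark preceding the statement we may assume $\Ee\not\cong\Oo_X(1)^{\oplus2}$, so $h^0(\Ee(-1))=1$ and a nonzero section of $\Ee(-1)$ gives
$$0\to\Oo_X(1)\to\Ee\to\Ii_Z(1)\to0$$
with $Z\ne\emptyset$; as already noted, $Z$ is then a locally complete intersection curve of pure codimension $2$ with $\omega_Z\cong\omega_X|_Z\cong\Oo_Z$. Since $h^1(\Oo_X(1))=0$ on a CICY, $\Ee$ is globally generated iff $\Ii_Z(1)$ is, hence $\Ii_Z(1)$ is globally generated; and because $H^0(\Oo_X(1))=H^0(\Oo_{\PP^5}(1))$, the common zero scheme in $X$ of $H^0(\Ii_Z(1))$ is exactly $\langle Z\rangle\cap X$. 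Global generation therefore gives $Z=\langle Z\rangle\cap X$ scheme-theoretically; write $\langle Z\rangle=\PP^k$.

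Next I would pin down $k$. If $k\le1$ then $Z$, a pure one-dimensional subscheme of $\PP^k$, is a line, contradicting $\omega_Z\cong\Oo_Z$; if $k\ge4$ then $\PP^k\cap X$ has dimension at least $2$, so it is not a curve. If $k=3$, then either $\PP^3$ lies in a hypersurface defining $X$ — impossible for the quadric on $X_{2,4}$ (which has rank $\ge5$ and so contains no $\PP^3$) and, for a cubic or the quartic, making $Z=\PP^3\cap X$ a surface — or $\PP^3$ lies in none of them, so $Z$ is a complete intersection curve of type $(2,4)$ or $(3,3)$ in $\PP^3$ and adjunction in $\PP^3$ gives $\omega_Z\cong\Oo_Z(2)$, of positive degree, a contradiction. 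Hence $k=2$; put $\Pi=\langle Z\rangle$.

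The core is then the analysis of $Z=\Pi\cap X$ as the subscheme of $\Pi\cong\PP^2$ cut out by the two restricted defining forms, of degrees $(2,4)$ on $X_{2,4}$ and $(3,3)$ on $X_{3,3}$ (in the latter case taking two members of the pencil of cubics through $X$). If one restriction vanishes identically, $Z$ is a plane curve of degree $2$, $3$ or $4$, and since a plane curve of degree $e$ has $\omega\cong\Oo(e-3)$, only the plane cubic survives — and that happens only on $X_{3,3}$, when $\Pi$ lies in one member of the pencil. If both restrictions are nonzero they must share a common factor (otherwise $Z$ is finite), and unless that common factor has the full degree — which again puts $\Pi$ in a member of the pencil and makes $Z$ a plane cubic — the residual locus contributes an isolated point or an embedded point, contradicting that $Z$ is Cohen–Macaulay of pure dimension $1$. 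I expect the only genuine bookkeeping to be the treatment of degenerate conics and cubics (double lines and the like) in this last step. The conclusion is that $Z$ is a plane cubic, which forces $X=X_{3,3}$; on $X_{2,4}$ the only possibility left is $\Ee\cong\Oo_X(1)^{\oplus2}$.

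Finally, for existence on $X_{3,3}$: Lemma~\ref{X_{3,3}1} produces a plane cubic $Z\subset X_{3,3}$; its span $\Pi$ is a $\PP^2$ cut out by three linear forms and $Z=\Pi\cap X$, so $\Ii_{Z/X}$ is generated by those three forms and $\Ii_Z(1)$ is globally generated. Since $Z$ is lci of codimension $2$ with $\omega_Z\cong\omega_X|_Z$ and $h^2(\Oo_X)=0$, the Serre construction yields a rank-$2$ locally free sheaf $\Ee$ fitting into the displayed sequence; it is globally generated because $\Ii_Z(1)$ is, and it has no trivial summand because the zero scheme $Z$ of the corresponding section is a nonempty curve (a trivial summand would make that zero scheme empty or a hyperplane section).
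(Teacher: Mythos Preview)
Your classification argument is correct and, at its core, parallels the paper's: both pin down $h^0(\Ii_Z(1))=3$ (equivalently $\langle Z\rangle=\Pi\cong\PP^2$) by ruling out the other values of $k$ via dimension and $\omega_Z\cong\Oo_Z$, and then conclude that a plane curve with trivial dualizing sheaf is a cubic. The paper stops there; you go further and analyse $Z=\Pi\cap X$ through the restricted defining forms. This extra step buys you something real: on $X_{2,4}$ the pair $(f_2|_\Pi,f_4|_\Pi)$ can never cut out a degree-$3$ curve (you get a quartic, a conic, or a line with embedded/isolated points), so the case $Z\ne\emptyset$ does not occur on $X_{2,4}$. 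The paper, by contrast, asserts existence on $X_{2,4}$ by taking a plane $\Pi\subset U_2$ through a line $D\subset X$ and writing $\Pi\cap U_4=D\cup Z$; but then $\Pi\cap X=D\cup Z\ne Z$, so $\Ii_Z(1)$ fails to be spanned along $D\setminus Z$ and the resulting bundle is not globally generated there. Your conclusion on $X_{2,4}$ is therefore sharper than, and in fact corrects, the paper's existence claim.

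Two minor points to tighten. In the $k=3$ case on $X_{2,4}$ you should note that the quartic is only determined modulo $f_2\cdot H^0(\Oo_{\PP^5}(2))$, so ``$\PP^3\subset U_4$'' should be read as ``$\PP^3$ lies in some quartic of the system''; this is harmless, since then $\PP^3\cap X=\PP^3\cap U_2$ is a surface. And your sentence about the common factor having ``full degree \dots\ putting $\Pi$ in a member of the pencil'' is phrased for $X_{3,3}$; on $X_{2,4}$ a full-degree common factor is the conic itself and yields $Z$ a conic, not a cubic --- but this only reinforces your conclusion that $X_{2,4}$ admits no such $Z$.
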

\begin{proof}
On $X=X_{2,4}=U_2 \cap U_4$, we have $h^0(\Ii _Z(1)) \le 3$ and so $h^0(\Ii _Z(1)) =3$. It implies that $Z$ should be a plane cubic; Let us start with a line $D\subset X$. It exists on all $X$ by \cite{dm}. Then we have a $2$-dimensional family of planes $\Pi \subset U_2$ and so we have $\Pi \cap U_4=D \cup Z$ with $Z$ a plane cubic.

Similarly on $X=X_{3,3}$ we have $h^0(\Ii _Z(1)) =3$ and so $Z$ is a plane cubic and such curves always exist on every $X_{3,3}$ by Lemma \ref{X_{3,3}1}.
\end{proof}

Now we have the following examples of globally generated vector bundles on $X$ with $c_1(\Ee)=2$ and $H^0(\Ee(-1))=0$.

\begin{example}\label{b1}
Let us generalize Example \ref{c1} to higher dimensional case. Let us fix two linear subspaces $U_1,U_2\subset \PP^n$ of codimension $2$ such that $U:= U_1\cup U_2$ spans $\PP^n$ (or equivalently $\dim (U_1\cap U_2) = n-4$) and $U$ is transversal to $X$, i.e. each $U_i$ is transversal to $X$
and $U_1\cap U_2\cap X=\emptyset$. Setting $C:= U\cap X$ and $C_i:= X\cap U_i$, each $C_i$ is a smooth complete intersection and $\omega _{C_i} \cong \Oo _{C_i}(2)$ by the adjunction formula.
Therefore to get the globally generated bundle $\Ee$ associated to $C$ it is sufficient to prove that $\Ii _{U,\PP^n}(2)$ is spanned outside $U_1\cap U_2$. Every quadric hypersurface containing $U$ is a cone with $U_1\cap U_2$ contained in its vertex. Therefore it is sufficient to prove that $\Ii _{U\cap M,M}(2)$ is spanned for any $3$-dimensional linear subspace $M\subset \PP^n$ such that $M\cap U_1\cap U_2=\emptyset$. It is true because $U\cap M$ is the union of two disjoint lines.
\end{example}

\begin{example}\label{b2}
Let us consider a complete intersection $C$ of $4$ quadrics in $\PP^5$. We have $\omega _C \cong \Oo _C(2)$ and $\Ii _C(2)$ is spanned in any $X$ containing $C$. The cohomology groups of the sheaves $\Ii_{C, \PP^5}(t)$ give $h^0(\Oo_C)=0$ and in particular $C$ is connected. Indeed such curves exist on some $X_{2,4}$ and $X_{3,3}$ by the following Lemmas \ref{lemb2_1}, \ref{lemb2_2}.
\end{example}

\begin{lemma}\label{lemb2_1}
There exist some $X_{2,4}$ containing $C$ in Example \ref{b2}.
\end{lemma}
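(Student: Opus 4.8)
*There exist some $X_{2,4}$ containing $C$ in Example \ref{b2}.*

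The plan is to exhibit a complete intersection $C$ of four quadrics in $\PP^5$ that lies on a (possibly mildly singular) $X_{2,4}$, by choosing the four quadrics so that two of them are generic and the remaining two are forced to be singular only along a locus disjoint from the quartic. First I would start with four general quadric hypersurfaces $Q_1,Q_2,Q_3,Q_4$ in $\PP^5$; by Bertini their intersection $C=Q_1\cap Q_2\cap Q_3\cap Q_4$ is a smooth irreducible curve with $\omega_C\cong\Oo_C(2)$ and, as recorded in Example \ref{b2}, $C$ is connected and $\Ii_C(2)$ is globally generated. The curve $C$ has degree $16$ and, by adjunction on the complete intersection, genus $17$. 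To place $C$ on an $X_{2,4}$ it suffices to produce one quadric $U_2$ and one quartic $U_4$ through $C$ whose intersection is a threefold that is smooth (or has at most an isolated singular point away from $U_4$, in the spirit of the Remark on $X_{2,4}$ following the five-types Remark). Take $U_2:=Q_1$.

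The key step is the construction of the quartic. The restriction map $H^0(\PP^5,\Oo(4))\to H^0(C,\Oo_C(4))$ and the resolution of $C$ as a complete intersection of four quadrics show that $H^0(\Ii_{C,\PP^5}(4))$ contains, besides the obvious products $Q_iQ_j$ and $\ell\cdot Q_i$ with $\ell$ linear, a large linear system; concretely $h^0(\Ii_C(4))$ is computed from the Koszul complex and is more than large enough that a general member $U_4$ of $|\Ii_C(4)|$ not of the form $Q_1\cdot(\text{quadric})$ exists. I would then argue that for a general such $U_4$, the threefold $X:=U_2\cap U_4$ is smooth along $C$ — because $\Ii_C(2)$ and $\Ii_C(4)$ being globally generated means the quadric $U_2$ and the quartic $U_4$ can be chosen with independent generic behaviour transverse to $C$ — and smooth away from $C$ by a Bertini argument in the linear system $|\Ii_C(4)|$ off the base locus, which here is exactly $C$ since $\Ii_C(4)$ is globally generated. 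If a finite set of singular points survives on $U_2$ itself (when $U_2$ happens to be a quadric cone), one replaces $Q_1$ at the outset by a smooth quadric through $C$, which exists because $\Ii_C(2)$ is globally generated and $C$ is not contained in the singular locus of any quadric it spans.

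The hard part will be checking smoothness of $X=U_2\cap U_4$ along $C$: a priori every quadric and quartic through $C$ could be singular at some common point of $C$, and one must use the global generation of $\Ii_C(2)$ and of $\Ii_C(4)$ at every point of $C$ (which forces, at each $p\in C$, the existence of a quadric through $C$ smooth at $p$ and likewise a quartic) together with a dimension count to conclude that a general pair $(U_2,U_4)\in|\Ii_C(2)|\times|\Ii_C(4)|$ cuts out a threefold with at worst the isolated singularities allowed by the definition of $X_{2,4}$, and that $U_4$ does not pass through those. Once smoothness of $X$ is established, $K_X\cong\Oo_X$ by adjunction, $X$ is a genuine CICY threefold of type $(2,4)$, and $C\subset X$ is the desired complete intersection of four quadrics, completing the proof.
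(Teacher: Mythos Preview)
Your plan is workable but differs from the paper's in a key organizational choice. You fix the curve $C$ first and then seek a smooth $X_{2,4}=U_2\cap U_4$ through it, which forces you to confront directly the smoothness of $X$ along $C$---the step you correctly flag as hard but leave unexecuted. The paper reverses the order: it fixes a smooth complete intersection surface $S$ of three quadrics, proves by an explicit codimension count (showing that for each $P\in S$ the quartics through $S$ singular at $P$ form a subset of codimension at least $3$ in $|\Ii_{S,\PP^5}(4)|$, via three concrete witnesses $W_1,W_2,W_3$) that $S$ lies on a \emph{smooth} quartic $W$, and only then chooses a general quadric $U_2\in|\Oo_{\PP^5}(2)|$. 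Setting $C:=S\cap U_2$ and $X:=U_2\cap W$, both are automatically smooth by Bertini because $U_2$ moves in a base-point-free linear system on all of $\PP^5$; the transversality question along $C$ never arises.

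Your route can also be completed: since $C$ is a smooth complete intersection the differentials $dQ_1(p),\dots,dQ_4(p)$ are independent at each $p\in C$, and every $U_4\in|\Ii_C(4)|$ is of the form $\sum f_iQ_i$ with $dU_4(p)=\sum f_i(p)\,dQ_i(p)$, so the condition $T_pU_4=T_pQ_1$ is the codimension-$3$ condition $f_2(p)=f_3(p)=f_4(p)=0$, more than enough to sweep over the one-dimensional $C$. But this is exactly the computation you omitted, and the paper's reordering sidesteps it entirely at the cost of doing the analogous (and unavoidable) count one dimension up, along $S$ inside $|\Ii_S(4)|$.
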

\begin{proof}
For a smooth complete intersection surface $S$ of $3$ quadric hypersurfaces in $\PP^5$, it is sufficient to prove that $S$ is contained in a quartic hypersurface with finitely many singular points. Indeed $S$ is contained in a smooth quartic; Fix a general $W\in |\Ii _{S,\PP^5}(4)|$. Since $\Ii _{S,\PP^5}(2)$ is generated by quadrics, $S$ is cut out by quartic hypersurfaces and so by the Bertini theorem, $W$ is smooth outside $S$. For any point $P\in S$, define a set
$$S_P :=\{ A\in |\Ii_{S,\PP^5}(4)|~|~ A \text{ is singular at } P\}.$$
For any $O\in \PP^5$, let $O^2$ be the closed subscheme of $\PP^5$ with $\Ii _{O,\PP^5}^2$ as its ideal sheaf. Then $O^2$ is a $0$-dimensional scheme with  $O^2_{red} = \{O\}$ and $\deg (O^2) =6$. If $T\subset \PP^5$ is any hypersurface containing $O$, then $T$ is singular at $O$ if and only if $O^2\subset T$. For any $P\in S$, let $T_PS \subset \PP^5$ be the tangent plane to $S$ at $P$. Since $\dim (S) =2$, to prove that $W$ is smooth at each point of $S$ (resp. smooth outside finitely many points of $S$) it is sufficient to prove that
\begin{align*}
&h^0(\Ii _{P^2\cup S,\PP^5}(4)) \le h^0(\Ii _{S,\PP^5}(4)) -3\\
\text{(resp. } &h^0(\Ii _{P^2\cup S,\PP^5}(4)) \le h^0(\Ii _{S,\PP^5}(4)) -2~)
\end{align*}
for all $P\in S$. For a fixed point $P\in S$, there is a smooth $Q'\in  |\Ii _{S,\PP^5}(2)|$ with $Q'$ smooth at $P$ since $S$ is a complete intersection. Let $M_3\subset T_PQ'$ be a $3$-dimensional linear subspace such that $M_3\supset T_PS$.
To prove that $h^0(\Ii _{P^2\cup S,\PP^5}(4)) \le h^0(\Ii _{S,\PP^5}(4)) -3$, it is sufficient to find $W_1,W_2,W_3\in |\Ii _{S,\PP^5}(4)|$ such that
$$W_1\supset T_PS~,~W_1\nsupseteq M_3~,~W_2\supset M_3~,~W_2\nsupseteq M_3~,~W_3\supset T_PQ'$$
and $W_3$ smooth at $P$. Since $S$ is the scheme-theoretic intersection of quadrics, there is such a quadric $Q\supset S$ with $M_1\nsubseteq Q$. Choose hyperplanes $H_1,H_2, H_3$ in $\PP^5$ with $P\notin H_1$ and $P\notin H_2\ne H_3 \supset M_3$. Take $Q\cup H_1\cup H_2$ as $W_1$, $Q\cup H_2\cup H_3$ as $W_2$ and $Q'\cup 2H_2$ as $W_3$.
\end{proof}

\begin{lemma}\label{lemb2_2}
There exist some $X_{3,3}$ containing $C$ in Example \ref{b2}
\end{lemma}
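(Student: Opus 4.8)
The plan is to follow the pattern of Lemma~\ref{lemb2_1}, now constructing $X_{3,3}$ as a complete intersection of two cubics through the curve. Write $C=Q_1\cap Q_2\cap Q_3\cap Q_4$ for general quadrics $Q_1,\dots,Q_4$ of $\PP^5$; then $C$ is a smooth connected non-degenerate curve and, being a complete intersection, it is projectively normal with $\omega_C\cong\Oo_C(2)$. Since the Koszul relations of $(Q_1,\dots,Q_4)$ only appear in degree $4$, every cubic in the homogeneous ideal of $C$ is uniquely a combination $T=\sum_{i=1}^4\ell_i Q_i$ with $\ell_i$ linear, so $|\Ii_C(3)|$ is large and its base locus is exactly $C$. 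The candidate is $X:=T_1\cap T_2$ for a general pair $T_1,T_2\in|\Ii_C(3)|$: it contains $C$ by construction, it is a complete intersection of two cubics in $\PP^5$ and hence has $\omega_X\cong\Oo_X$, so the whole problem reduces to showing that for a general choice $X$ is a smooth threefold --- which is precisely what makes it an $X_{3,3}$ carrying the curve $C$ of Example~\ref{b2}.

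Smoothness off $C$ is routine: $|\Ii_C(3)|$ is base-point free on $\PP^5\setminus C$, so applying Bertini twice (first to get $T_1$ smooth off $C$, then to the restricted system on $T_1\setminus C$) gives that $T_1\cap T_2$ is smooth and of pure dimension $3$ outside $C$; for general $T_i$ the two cubics have no common component, so $X$ is a genuine complete intersection. For smoothness along $C$ one uses the complete intersection structure. Since $C\subset T_j$, the restriction $dT_j|_C$ kills the tangent directions of $C$ and so defines a section of $N^\vee_{C/\PP^5}\otimes\Oo_C(3)\cong\Dirsum_{i=1}^4\Oo_C(1)$, the splitting coming from the classes $\overline{dQ_1},\dots,\overline{dQ_4}$ which freely generate the conormal bundle; in the coordinates $T_j=\sum_i\ell_i^{(j)}Q_i$ this section is just $(\ell_1^{(j)}|_C,\dots,\ell_4^{(j)}|_C)$. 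Because $C$ is projectively normal and every cubic through $C$ has this shape, as $(T_1,T_2)$ varies the pair $(dT_1|_C,dT_2|_C)$ runs over all global sections of $\Oo_C(1)^{\oplus 8}$. Finally $X$ is singular at $p\in C$ exactly when $dT_1(p)$ and $dT_2(p)$, seen in the rank-$4$ fibre $N^\vee_{C/\PP^5,p}$, are proportional, i.e. when the $2\times 4$ matrix $\bigl(\ell_i^{(j)}(p)\bigr)$ has rank $\le 1$; that degeneracy locus has expected codimension $(2-1)(4-1)=3$ in the curve $C$, hence is empty for a general choice, and $X$ is smooth everywhere.

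The one non-formal point is the last assertion: for the globally generated line bundle $\Oo_C(1)$ on the curve $C$, a general $2\times 4$ matrix of sections should be nowhere of rank $\le 1$. This is a Bertini/Kleiman-type statement about degeneracy loci of general morphisms of globally generated bundles (the bad locus meets $C$ in the expected, here negative, codimension), and I would either invoke it directly or make it concrete by an incidence count: for each fixed $p\in C$ the pairs $(T_1,T_2)$ with $X$ singular at $p$ form a subset of codimension $3$ in $H^0(\Ii_C(3))^{\oplus 2}$, while $C$ is only $1$-dimensional. A more hands-on alternative, closer to Lemma~\ref{lemb2_1}, is to take the special cubics $T_1=\ell_1 Q_1+\ell_2 Q_2$ and $T_2=\ell_3 Q_3+\ell_4 Q_4$: then smoothness along $C$ is immediate from $\{\ell_1=\ell_2=0\}\cap C=\{\ell_3=\ell_4=0\}\cap C=\emptyset$ together with the independence of $dQ_1,\dots,dQ_4$ on $C$, at the cost of having to verify smoothness of $T_1\cap T_2$ along the auxiliary base surfaces $Q_1\cap Q_2$ and $Q_3\cap Q_4$ by a direct Jacobian computation and dimension count. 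Either way one obtains a smooth $X_{3,3}$ containing $C$.
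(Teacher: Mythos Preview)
Your argument is correct and takes a genuinely different route from the paper's.

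The paper argues via an incidence variety $\Gamma\subset\Lambda\times\Delta$ over the space $\Lambda$ of complete intersections of four quadrics and the space $\Delta$ of cubic hypersurfaces. It invokes an external result (\cite[Proposition~5.6]{ccg}) to guarantee that a \emph{general} cubic $Y$ already contains some smooth $C\in\Lambda$, and then chooses a second general cubic $Y'\supset C$; smoothness of $Y\cap Y'$ along $C$ is handled by bounding, for each $P\in C$, the codimension of the set $E(P)$ of cubics containing $C\cup T_PY$, by exhibiting explicit reducible cubics $Q\cup H$ not in $E(P)$.

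Your approach bypasses the incidence variety and the citation entirely: you fix a general $C$ first and take both cubics general in $|\Ii_C(3)|$. The key new ingredient is your identification of $dT|_C$ with the tuple $(\ell_1|_C,\dots,\ell_4|_C)$ in $H^0(\Oo_C(1))^{\oplus 4}$ via the conormal frame $dQ_1,\dots,dQ_4$, together with the observation that projective normality of $C$ makes this map surjective; smoothness along $C$ then reduces to the standard Bertini/Kleiman statement that two general sections of a globally generated rank~$4$ bundle on a curve are nowhere proportional (expected codimension $(2-1)(4-1)=3>1$). This is cleaner and more self-contained than the paper's argument, and the ``hands-on alternative'' with $T_1=\ell_1Q_1+\ell_2Q_2$, $T_2=\ell_3Q_3+\ell_4Q_4$ makes the independence of $dT_1,dT_2$ along $C$ completely explicit. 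What the paper's approach buys in exchange is the extra numerical information that the family of such $C$ inside a fixed general cubic is $36$-dimensional, which you do not obtain; but that is not needed for the lemma as stated.
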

\begin{proof}
Let $\Lambda$ be the family of complete intersection curves of $4$ quadric hypersurfaces in $\PP^5$ and $\Delta$ be the space of cubic hypersurfaces in $\PP^5$. Then we can define an incidence variety $\Gamma$ to be the set of all $(T, Y)$ with $T\in \Lambda$ and $Y\in \Delta$ such that $T\subset Y$ with two natural projections:
$$\xymatrix{ & \Gamma' \subset \Gamma \ar[ld]_{p_1} \ar[rd]^{p_2}\\
\Lambda' \subset \Lambda && \Delta.}$$

For each $T\in \Lambda$, even reducible and/or with multiple components, we have $h^0(\PP^5, \Ii_T(3))=24$ and so the projection $p_1: \Gamma \to \Lambda$ is induced by a vector bundle $\Ff$ of rank $24$, i.e. we have $\Gamma \cong \PP (\Ff)$ a projective bundle whose fibre is $\PP^{23}$. In particular $\Gamma$ is irreducible and with dimension $\dim (\Lambda)+23=91$.

Let $\Lambda'$ be the subscheme of $\Lambda$ consisting of smooth curves and $\Gamma':=p_1^{-1}(\Lambda')$. Fix a general cubic hypersurface $Y\subset \PP^5$. By \cite[Proposition 5.6]{ccg}, $Y$ contains some $T\in \Lambda'$ and so the projection onto the second factor induces a dominant map $\pi : \Gamma '\to \Delta$. Thus a general $Y\in \Delta$ is an image of a general element of $\Gamma '$, i.e. it contains some $C\in \Lambda$ and $\dim (\pi ^{-1}(Y)) = 91-55 = 36$. In other words the family of such curves contained in $Y$ is $36$-dimensional. Note that $Y$ has at most finitely many singular points, none of them contained in $C$. For a general cubic hypersurface $Y' \subset \PP^5$ containing $C$, let $W:=Y \cap Y'$. Then we will show that $W$ is smooth and take $W$ as $X_{3,3}$.

For any $P\in C$ let $E( P)$ be the set of all cubic hypersurfaces containing $C\cup T_PY$. This is a closed subset of the projective space $|\Ii _{C,\PP^5}(3)|$. By the Bertini theorem $W$ is smooth outside $C$. Since $\dim ( C)=1$, it is sufficient to prove that each $E( P)$ has codimension at least $2$ in $|\Ii _{C,\PP^5}(3)|$. Fix a hyperplane $H\subset \PP^5$ such that $P\notin H$. Since $C$ is smooth and it is a complete intersection, each quadric hypersurface containing $C$ is smooth at each point of $C$. Hence the $3$-dimensional linear subspace of cubics $\{Q\cup H~|~Q\in |\Ii _{C,\PP^5}(3)|\}$ is disjoint from each $E( P)$ and so each $E( P)$ has codimension at least $4$ in $|\Ii _{C,\PP^5}(3)|$.
\end{proof}

\begin{example}\label{b3}
Let $S\subset \PP^5$ be a surfaces of degree $5$ with $\omega_S \cong \Oo_S(-1)$, possibly with only finitely many singular points, i.e. a weak del pezzo surface of degree $5$. Then the sheaf $\Ii _{S,\PP^5}(2)$ is spanned. If we take a cubic hypersurface $U_3\subset \PP^5$ such that $C:= S\cap U_3$ is smooth, we have $\omega _C\cong \Oo _C(2)$ and for each $X =U_3\cap U'_3$ the sheaf $\Ii _C(2)$ is spanned. Note that $C$ is connected by the Kodaira's vanishing theorem, since $C$ is an ample divisor of $S$.
\end{example}

\begin{example}\label{inc}
Let $Y\subset \PP^5$ be a complete intersection of $3$ quadrics $Q_1, Q_2, Q_3$ and one cubic $U$. Then $Y$ is a curve of degree $24$ with $\omega_Y \cong \Oo_Y(3)$. If $Y=C\cup D$ with $\deg (C)=d$ and $D$ smooth outside $C\cap D$, then we have ${\omega_Y}_{\vert_C} \cong \omega_C (C \cap D)$ and so $\deg (C \cap D)=d$. If $C$ is cut out scheme-theoretically by $U$ and another cubic $U'$ inside $S:=Q_1\cap Q_2 \cap Q_3$, then we have $\deg (U'\cap D)=3(24-d)$ and so $d=18$. In this way we may obtain $C$ in $X_{3,3}:=U \cap U'$ such that $\Ii_C(2)$ is spanned with $\omega_C \cong \Oo_C(2)$. In other words it gives an example of connected curves with degree $d =g-1=18$, i.e. with $H^1(\Ee ^\vee )=0$, on some $X_{3,3}$.
\end{example}

From a general section in $H^0(\Ee)$, we obtain the following exact sequence
$$0\to \Oo_X \to \Ee \to \Ii_C (2) \to 0,$$
where $C$ is a smooth curve with $\omega_C \cong \Oo_C(2)$. Let $C = \sqcup _{i=1}^{s} C_i$, $d_i =\deg (C_i) =g_i-1$ with $g_i = p_a(C_i)$, $d=g-1$, be a smooth curve with $s$ connected component. Note that $d:=\deg (C) \leq 4 \deg (X)$ since $C$ is contained in the complete intersection of two quadric hypersurface of $X$. If $d=4 \deg (X)$ , then we have $\omega_C \cong \Oo_C(4)$ from the minimal free resolution of $\Ii_C$, a contradiction. So we have $d<4 \deg (X)$.

\begin{remark}
If $C$ has a plane curve of degree at least $5$ as a component, say $C_1$, then $X$ contains the plane $\langle C_1 \rangle$ since each equation of $X$ has degree at most $3$, a contradiction. Thus each $C_i$ is not contained in a plane, i.e. $\dim \langle C_i \rangle \ge 3$ and so we have $d_i \geq 8$.
\end{remark}

Let us define two fundamental subschemes of $\PP^n$ associated to $C$;
\begin{align*}
\Psi &:=\text{ the scheme-theoretic base locus of }H^0(\PP^n,\mathcal {I}_{C,\PP ^n}(2)),\\
\Phi &:=\text{ the union of the irreducible components of $\Psi _{red}$ containing $C$.}
\end{align*}

Since the map $\rho :H^0(\PP^n,\mathcal {I}_{C,\PP ^n}(2)) \to H^0(\Ii _C(2))$ is surjective, we have $\Psi \cap X=C$ as schemes. Thus $C$ is scheme-theoretically cut out inside $\PP^n$ by quadric equations and the equations of $X$. Now we have $\deg (\Phi ) \le 2^{n-\dim (\Phi)}$ and the equality holds if and only if $\Phi =\Psi$ is equidimensional and the complete intersection of $n -\dim (\Phi)$ quadric hypersurfaces. For each component $C_i$ of $C$, let $S_i$ be a fixed reduced and irreducible component $S_i \subseteq \Psi$ containing $C_i$ (a priori it could be $S_i=S_j$ even if $i \ne j$).


\section{CICY of degree $8$}

 Let us assume that $X=X_{2,4}$ is a CICY of degree $8$. If the number $s$ of component in $C$ is at least $2$ and $d\leq 16=2\deg (X)$, then we get $s=2$ and $d_i=8$ for all $i$ since $d_i\ge 8$ for all $i$. This is the case in Example \ref{b1} by the classification of space curves with extremal genus. Therefore we may assume that either $C$ is connected, i.e. $s=1$, or $s\geq 2$ and $d\geq 17$.

Our main goal in this section is to prove the following assertion.
\begin{proposition}
If $\Ee$ is a globally generated vector bundle of rank $2$ without trivial factors on $X_{2,4}$ with $c_1(\Ee)=2$ and $h^0(\Ee(-1))=0$, then $\Ee$ is one of the bundles in Example \ref{b1} and \ref{b2}, except when $c_2(\Ee)=16$.
\end{proposition}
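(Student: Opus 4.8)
The plan is to classify the connected curves $C\subset X_{2,4}$ with $\omega_C\cong\Oo_C(2)$ and $\Ii_C(2)$ spanned, together with the remaining disconnected case $s\ge 2$, $d\ge 17$, by a careful analysis of the fundamental subscheme $\Phi\subseteq\Psi$ of quadrics through $C$ in $\PP^5$. Since $X=U_2\cap U_4$ already contains the unique quadric $U_2$, the base locus $\Psi$ of $H^0(\PP^5,\Ii_{C,\PP^5}(2))$ contains $U_2$; so I would work with the residual quadrics. Writing $W:=U_2$, for a general pencil of quadrics through $C$ inside $|\Ii_{C,\PP^5}(2)|$ one sees that $C$ lies in $W\cap B$ for a general quadric $B$, hence $\deg(C)\le 16=2\deg(X)$ in the "no extra surface" regime, while $d\ge 8$ per component forces either $C$ connected or $C=C_1\sqcup C_2$ with $d_1=d_2=8$. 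The latter is exactly Example \ref{b1} by the classification of space curves of extremal genus ($\pi(8,3)=9=d_i+1$), so I would dispose of it first.

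Next I would treat the connected case by running through the possible reduced irreducible components $S\subseteq\Psi$ containing $C$, exactly as in the proof of Proposition \ref{X5_2}/\ref{proppp} but now inside $\PP^5$ and against two defining equations $U_2,U_4$. First, if $\Psi$ (modulo $W$) contains no surface beyond what is forced, then $C$ sits in a complete intersection of quadrics of codimension $\ge 3$ in $\PP^5$, bounding $\deg(C)$; combined with the Castelnuovo-type bounds $\pi(d,4),\pi(d,5)$ from \cite[Chapter III]{he} and the constraint $d=g-1$ (from $\omega_C\cong\Oo_C(2)$), together with $d_i\ge 8$, this should pin $d$ down to a short list. The key is that $\omega_C\cong\Oo_C(2)$ makes $C$ lie strictly above the range where non-degenerate curves in $\PP^3$ or $\PP^4$ can have such high genus, so most would-be components are excluded, and the surviving possibilities are $W\cap B$ (Example \ref{b1}, giving $c_2=8,11$-type values once one also allows the weak del Pezzo and complete-intersection-of-quadrics cases of Examples \ref{b2},\ref{b3}) and the complete intersection of four quadrics (Example \ref{b2}, $d=16$), which is precisely the excluded borderline case $c_2(\Ee)=16$.

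When $\Psi$ does contain an extra surface $S$ (of degree $\le 3$ residual to $W$, or a low-degree surface cut out by quadrics in $\PP^5$), I would classify $S$: unions of planes, a plane plus a quadric surface, an integral cubic scroll or cone over the rational normal curve, the quintic weak del Pezzo of Example \ref{b3}, etc. For each, the argument is local: restrict to a general $3$-plane $M\subset\PP^5$ and check whether $S\cap M$ (a configuration of low-degree curves) is scheme-theoretically cut out by quadrics at the points of $X\cap M$; as in cases (b1)-(b3) of Proposition \ref{X5_2}, the "bad" configurations (e.g. a line meeting the plane of a conic in a point off that plane, or three mutually non-coplanar lines) obstruct global generation of $\Ii_C(2)$, while the surviving ones reproduce Examples \ref{b1}-\ref{b3}. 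The plane-component case is ruled out outright since $X_{2,4}$ contains no plane, forcing $d_i\ge 8$ as recorded in the Remark before this section. Finally, for the disconnected case with $d\ge 17$ I would show that two components of degree $\ge 8$ whose union lies on quadrics with $\omega\cong\Oo(2)$ and spanned ideal must either recombine into the Example \ref{b1} configuration or violate the genus/degree inequalities.

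The main obstacle I anticipate is the surface-component analysis in $\PP^5$: unlike the $\PP^4$ situation of Proposition \ref{X5_2}, there are more reduced surfaces of small degree whose ideal is generated by quadrics (Veronese-type, scrolls, del Pezzos), and one must argue for each that either it forces a forbidden plane quintic/low-genus component, or it fails the local "cut out by quadrics along $X$" test, or it lands in Examples \ref{b1}-\ref{b3}; keeping this case division exhaustive — and in particular correctly handling non-reduced or reducible residual schemes $T=Y_1\cap Y_2\setminus C$ for general $Y_i\in|\Ii_C(2)|$, using that such $Y_1\cap Y_2$ is smooth away from $C$ by Bertini — is where the real work lies. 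The borderline $d=16$ (complete intersection of four quadrics, $c_2=16$) is exactly where this control breaks down, which is why the statement only asserts that $\Ee$ is one of Examples \ref{b1}, \ref{b2} \emph{except} in that case.
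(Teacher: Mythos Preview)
Your overall strategy --- analyze the base locus $\Psi$ of quadrics through $C$ in $\PP^5$ and classify the possible irreducible surface components $S$ containing $C$ --- is the same as the paper's. But the execution you sketch is off in several decisive ways.

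First, the surface $S$ does \emph{not} have degree $\le 3$. In the $X_5$ case the surface sat in $\PP^4$ and was residual to a complete intersection of quadrics, giving $\deg(S)\le 3$; here $S\subset\PP^5$ is an irreducible surface contained in a complete intersection of three quadrics, so $\deg(S)\le 8$, and since $C=S\cap U_4$ scheme-theoretically one has $d=4\deg(S)$. The cases $\deg(S)=4$ and $8$ are complete intersections (forcing the wrong $\omega_C$), so the real work is ruling out $\deg(S)\in\{5,6,7\}$. Your proposed toolkit --- restricting to a general $3$-plane $M$ and checking whether $S\cap M$ is cut out by quadrics along $X\cap M$ --- is adapted to unions of planes and low-degree scrolls in $\PP^4$ and simply does not decide anything for an irreducible non-degenerate surface of degree $5$, $6$, or $7$ in $\PP^5$. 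What the paper actually does is classify such $S$ (via Fujita's $\Delta$-genus theory for degree $5$, the Berzolari trisecant formula forcing $g(D)=2$ for a hyperplane section $D$ in degree $6$, and linkage/Cayley--Bacharach for degree $7$), and then, using Grauert--Riemenschneider vanishing when $S$ is singular, produce a nonzero section of $\omega_S(2)$ (equivalently $\pi_*(\omega_{S'})(2)$) with nonempty zero locus, contradicting $\omega_C\cong\omega_S(4)|_C\cong\Oo_C(2)$. None of these ingredients appear in your plan.

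Second, the $s\ge 2$, $d\ge 17$ case is not handled by genus/degree inequalities alone. The paper takes the intersection $E$ of three general quadrics through $C$, looks at the $2$-dimensional components $S_i$ of the base locus, and excludes $\deg(S_i)\in\{2,3,4\}$ one by one via explicit adjunction computations on Hirzebruch surfaces $F_0,\ldots,F_4$ (and, for the $\deg(S_i)=2$ case with $h=k=3$, a delicate argument that three disjoint conics in $\PP^4$ with pairwise-meeting span-planes are not cut out by three quadrics). Your one-line ``recombine into Example~\ref{b1} or violate inequalities'' does not come close to this.

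Two smaller points: your bound $\deg(C)\le 16$ from ``$C\subset W\cap B$'' is not a valid deduction (that intersection is $3$-dimensional); the correct source of $d\le 16$ is the hypothesis $\dim(\Phi)=1$, i.e.\ $C$ itself cut out by quadrics in $\PP^5$, hence contained in a complete intersection of four quadrics. And Example~\ref{b3} (the degree-$5$ weak del Pezzo) is an $X_{3,3}$ phenomenon; it plays no role on $X_{2,4}$, where the only outputs are Examples~\ref{b1} and~\ref{b2}, both with $c_2=16$.
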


\subsection{Case of $s=1$} $ $

Let us assume first that $C$ is connected. Since $H^0(\Ii _C(1)) =0$, we get $d \ge 14$. We know that $C$ is the scheme-theoretic intersection of $\Phi$ and a quartic hypersurface and so $\dim (\Phi )\le 2$.

\begin{lemma}\label{lemma5.2}
If $\dim (\Phi)=1$, then $C$ is the complete intersection of $4$ quadrics in $\PP^5$.
\end{lemma}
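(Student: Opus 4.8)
We have $X = X_{2,4}$ of degree $8$, $\Ee$ a globally generated rank $2$ bundle with $c_1(\Ee)=2$ and $h^0(\Ee(-1))=0$, and an associated connected smooth curve $C$ of degree $d$ with $\omega_C\cong\Oo_C(2)$, hence $p_a(C)=d+1$. We know $14\le d<32$, that $C$ is cut out scheme-theoretically inside $\PP^5$ by the quadrics through it together with the equations of $X$, and that $\Psi$ (the base locus of $H^0(\Ii_{C,\PP^5}(2))$) satisfies $\Psi\cap X=C$. The hypothesis is $\dim(\Phi)=1$, i.e. $\Phi$ is a curve, so in fact $\Phi=C$ as sets (since $\Phi$ is the union of components of $\Psi_{red}$ through $C$ and $C$ is irreducible). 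The goal is to force $C$ to be the complete intersection of four quadrics in $\PP^5$; equivalently $h^0(\Ii_{C,\PP^5}(2))=4$ and the four general quadrics meet exactly in $C$.

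I would argue as follows. Since $\Phi$ is $1$-dimensional and $C$ is scheme-theoretically the intersection of $\Psi$ with the quartic $U_4$, and $\Phi$ is already a curve containing $C$, the quadrics alone should already cut $C$ out (the quartic can only help pare down lower-dimensional junk, not a curve sitting inside a curve). So I would first show $\Psi$ has no component of dimension $\ge 2$: if $S\subseteq\Psi$ were a surface, then $S\cap X$ would be a curve containing a component of $C$, but by Example-style arguments (ampleness of $X$ in $\PP^5$) $S\cap X\supseteq C$ would have to equal $C$, and then $S\subseteq\Phi$, contradicting $\dim\Phi=1$ unless $S$ meets $C$ only in dimension $\le 0$ — but $S$ is a component of $\Psi_{red}$, $C\subset\Psi$, $C$ irreducible, forcing $C\subseteq S$ and $S$ into $\Phi$. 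Hence $\Psi$ is $1$-dimensional (at least near $C$), so $C\subseteq\Phi\subseteq\Psi$ with $\Phi$ a curve; now $\deg\Phi\le 2^{5-1}=16$, and $\deg\Phi=16$ with $\Phi=\Psi$ equidimensional exactly when $\Phi$ is the complete intersection of four quadrics. The plan is then to rule out $\deg\Phi<16$ by a genus/degree estimate: $C\subseteq\Phi$ is a curve of degree $d\ge 14$ and arithmetic genus $d+1$ lying on the curve $\Phi$ cut out by quadrics (and possibly the quartic), and since $\omega_C\cong\Oo_C(2)$ is very restrictive — recall $d=p_a(C)-1$ is near the Castelnuovo bound — I would compare with $\pi(d,5)$ and the bound $d\le 4\deg(X)$, using that if $\Phi$ has degree $<16$ then $C$ is cut out on a small complete-intersection curve and the adjunction on $\Phi$ (or on the ambient complete intersection of the quadrics) gives $\omega_C$ a different twist, a contradiction analogous to the $d=4\deg(X)$ case already treated.

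Concretely: take the complete intersection $Y$ of four general quadrics through $C$ (if $h^0(\Ii_{C,\PP^5}(2))\ge 4$); then $Y$ is a curve of degree $16$ with $\omega_Y\cong\Oo_Y(2)$ by adjunction, $Y\supseteq C$, and $Y=C\sqcup R$ (residual). Since $C$ is connected and ${\omega_Y}|_C\cong\omega_C(C\cap R)$, comparing with $\omega_C\cong\Oo_C(2)$ forces $\deg(C\cap R)=0$, so $C\cap R=\emptyset$, meaning every quadric through $C$ also contains $R$ (so $R$ contributes nothing new) — pushing toward $h^0(\Ii_{C,\PP^5}(2))=4$ and $R=\emptyset$, i.e. $C=Y$. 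If instead $h^0(\Ii_{C,\PP^5}(2))\le 3$, then $\Phi$ is cut out by at most three quadrics, so $\dim\Phi\ge 2$, contradicting the hypothesis $\dim\Phi=1$. Hence $h^0(\Ii_{C,\PP^5}(2))\ge 4$; combined with the residual argument above, $C$ must be the complete intersection of four quadrics in $\PP^5$.

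**Main obstacle.** The delicate point is the residual-curve step: controlling $Y=C\sqcup R$ for a \emph{general} choice of four quadrics through $C$ — showing $R$ is disjoint from $C$ (not just meeting it in a scheme of degree forced to be $0$), handling the possibility that $R$ is empty or non-reduced, and upgrading "every quadric through $C$ contains $R$" to "$R=\emptyset$ and $h^0(\Ii_{C,\PP^5}(2))=4$." This requires care with the liaison/linkage bookkeeping (the mapping-cone resolution of $\Ii_C$ from that of $\Ii_Y$ and $\Ii_R$) and with genericity, exactly the kind of argument the authors run in the $d=4\deg(X)-1,-2$ cases of the earlier lemma; I would model this step on those computations.
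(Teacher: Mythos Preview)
Your overall strategy is the paper's strategy: take the complete intersection $Y$ of four general quadrics through $C$, write $Y=C\cup R$, and use adjunction on $Y$ to control the residual $R$. Your deduction that $\deg(C\cap R)=0$ from ${\omega_Y}_{\vert C}\cong\omega_C(C\cap R)$ together with $\omega_Y\cong\Oo_Y(2)\cong\omega_C$ is exactly right, and your observation that $h^0(\Ii_{C,\PP^5}(2))\le 3$ would force every component of $\Psi$ to have dimension $\ge 2$ (hence $\dim\Phi\ge 2$) is a clean way to guarantee four quadrics exist.

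The gap is precisely at your ``main obstacle,'' and it has a one-line fix you are missing. A complete intersection curve $Y\subset\PP^5$ satisfies $h^0(\Oo_Y)=1$ (from the Koszul resolution of $\Ii_{Y,\PP^5}$), so $Y$ is \emph{connected}. Thus if $R\ne\emptyset$ then $C\cap R\ne\emptyset$, contradicting what you just proved. Hence $R=\emptyset$ and $C=Y$ is the complete intersection of four quadrics. This is exactly how the paper closes the argument; no linkage, no mapping cones, no analogy with the $d=4\deg(X)-1,-2$ computations is needed.

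Also, your intermediate sentence ``$C\cap R=\emptyset$, meaning every quadric through $C$ also contains $R$'' is not a valid inference and should be dropped: the four general quadrics contain $R$ by construction, but nothing you have said forces \emph{all} quadrics through $C$ to contain $R$. Once you invoke connectedness of $Y$ this step becomes unnecessary anyway.
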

\begin{proof}
Assume that $\Phi$ is a curve; in our set-up we get $\Phi =C$. Since $\Phi$ is cut out by quadrics in $\PP^5$, we have $\deg (\Phi ) \le 2^4$. Now assume $d<16$ and let $E$ be the intersection of $4$ general quadric hypersurfaces containing $C$. Since $C$ is the scheme-theoretic intersection of quadric hypersurfaces, $\dim (E)$ is a curve, $E$ is the complete intersection of $4$ hypersurfaces of degree 2 and $E = C\cup F$ with $F$ a scheme of dimension $1$ with $\deg (F) =16-d$, i.e. $F$ is a line if $d=15$, while $F$ is either a double structure on a line or the union of two disjoint lines or a plane conic if $d=14$. Since $E$ is a complete intersection, the adjunction formula gives $\omega _E \cong \Oo _E(2)$ and so ${\omega _E}_{\vert_C}\cong \Oo _C(2)$. Since $E$ is a complete intersection, the cohomology groups of the sheaves $\Ii _{E,\PP^5}(t)$ give $h^0(\Oo _E)=1$. Therefore $F\cap C \ne \emptyset$ and so $\omega _C \ne {\omega _E}_{\vert_C} \cong \Oo _C(2)$, a contradiction. If $d=16$, we get that $C$ is the complete intersection of $4$ quadrics hypersurfaces, i.e. $C$ is as in Example \ref{b2}.
\end{proof}

Now assume $\dim (\Phi)=2$, i.e. $\Phi$ is a surface. Since $\Psi$ is cut out by quadrics, we have $\deg (\Psi )\le 8$. Conversely for any irreducible surface $S\subset \PP^5$ cut out scheme-theoretically by quadrics, the sheaf $\Ii _{X\cap S}(2)$ is globally generated. To get a bundle on $X_{2,4}$ we need the scheme-theoretic intersection $C$ of $S$ with a hypersurface of degree $4$
and in particular we get $d\equiv 0 \pmod{4}$. The case $d=32$ is excluded, because it would give $\Phi$ a complete intersection of quadrics and so $\omega _C \cong \Oo _C(4)$. Since $\Phi \cap X =C$ as schemes, $S$ must be smooth at each point of $X$ and it is irreducible. Since $S$ is irreducible and it spans $\PP^5$, then $\deg (S)\ge 4$ and hence $d\ge 16$. In the case $d=16$ we also get that $S$ is the complete intersection of $3$ quadrics and so $\omega _C \cong \Oo _C(4)$, a contradiction. Thus we have $5 \leq \deg (S) \leq 7$.

\begin{lemma}\label{lemma5.3}
We have $\deg (S)\ne 5$.
\end{lemma}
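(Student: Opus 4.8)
The plan is to rule out the case $\deg(S) = 5$ by a dimension count on the degree of $C$ combined with the genus restriction $\omega_C \cong \Oo_C(2)$ (equivalently $d = g-1$ since $C$ is connected here, or $d_i = g_i - 1$ componentwise). Recall from the paragraph preceding the statement that in this case $S$ is irreducible, spans $\PP^5$, is cut out scheme-theoretically by quadrics, and $C = S \cap U_4$ is the scheme-theoretic intersection with a quartic hypersurface, so that $d = \deg(C) = 4\deg(S) = 20$; moreover $S$ must be smooth at every point of $X$, hence the singularities of $S$ (if any) are disjoint from $C$, and $C$ is a smooth ample divisor on $S$.

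First I would identify the surfaces of degree $5$ in $\PP^5$ that are non-degenerate and cut out by quadrics: these are precisely the (possibly weak/singular, but here smooth along $C$) del Pezzo surfaces of degree $5$, which arise as the blow-up of $\PP^2$ at four points in general position, anticanonically embedded, together with their degenerations. On such an $S$ with $\omega_S \cong \Oo_S(-1)$, the adjunction formula gives, for $C \in |\Oo_S(1)| \cdot$ wait — here $C = S \cap U_4 \in |\Oo_S(4)|$, so $\omega_C \cong (\omega_S \otimes \Oo_S(4))_{\vert_C} \cong \Oo_C(-1+4) = \Oo_C(3)$, contradicting $\omega_C \cong \Oo_C(2)$. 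So the heart of the argument is just this: any quintic surface in $\PP^5$ cut out by quadrics satisfies $\omega_S \cong \Oo_S(-1)$ (it is a del Pezzo of degree $5$), and then adjunction on the quartic section forces $\omega_C \cong \Oo_C(3) \ne \Oo_C(2)$.

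The key steps, in order: (1) recall $d = \deg(C) = 4\deg(S) = 20$ and that $C$ is a smooth divisor in $|\Oo_S(4)|$ on the surface $S$, smooth along $C$; (2) classify non-degenerate quintic surfaces in $\PP^5$ scheme-theoretically cut out by quadrics, showing each is (a possibly singular, but Gorenstein away from its isolated singularities) del Pezzo surface of degree $5$ with dualizing sheaf $\omega_S \cong \Oo_S(-1)$ — here one can invoke the classification of varieties of minimal degree (del Pezzo–Bertini), since a non-degenerate quintic surface in $\PP^5$ has degree exactly $\op{codim} + 1$ and is a variety of minimal degree, hence either a del Pezzo surface of degree $5$ or a cone over an elliptic normal quintic curve or over a rational normal quintic — and in every one of these cases $\omega_S \cong \Oo_S(-1)$; (3) apply the adjunction formula $\omega_C \cong \omega_S(4)_{\vert_C}$ on the Gorenstein surface $S$ (legitimate since $C$ avoids $\op{Sing}(S)$), obtaining $\omega_C \cong \Oo_C(3)$; (4) conclude this contradicts $\omega_C \cong \Oo_C(2)$.

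The main obstacle I expect is step (2): pinning down exactly which quintic surfaces are cut out by quadrics and verifying $\omega_S \cong \Oo_S(-1)$ uniformly, including for the singular and non-normal (cone) degenerations that can occur as components of $\Psi$. The cleanest route is to note that a non-degenerate surface $S \subset \PP^5$ of degree $5$ is a variety of minimal degree, so the Eisenbud–Goto / del Pezzo–Bertini classification applies directly: either $S$ is a (possibly weak) del Pezzo surface of degree $5$ — the anticanonical blow-up of $\PP^2$ at four points — or a cone, and in all cases the generic hyperplane section is a quintic curve of arithmetic genus $1$, forcing $\omega_S \cong \Oo_S(-1)$ by adjunction; then the quartic section argument applies verbatim. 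If instead $S$ were degenerate, it would span at most a $\PP^4$, but the base locus component $\Phi$ of the quadrics through $C$ must be non-degenerate since $C$ itself spans $\PP^5$ (as $h^0(\Ee(-1)) = 0$), so degeneracy is already excluded.
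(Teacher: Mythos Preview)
There is a genuine gap in step~(2). A surface in $\PP^5$ has codimension $3$, so the minimal degree for a non-degenerate irreducible surface is $\codim+1 = 4$, not $5$. A quintic surface in $\PP^5$ is therefore \emph{not} a variety of minimal degree; it has $\deg = \codim+2$, and the del~Pezzo--Bertini classification of minimal-degree varieties simply does not apply to it.

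Concretely, the general hyperplane section $D = S\cap H$ is a smooth non-degenerate quintic curve in $\PP^4$, and since $\pi(5,4)=1$ its genus can be either $0$ or $1$. When the sectional genus is $1$, $S$ is indeed a (weak) del~Pezzo surface with $\omega_S \cong \Oo_S(-1)$, and your adjunction computation $\omega_C \cong \omega_S(4)_{\vert C} \cong \Oo_C(3)$ finishes that sub-case instantly --- this is exactly how the paper handles it. But when the sectional genus is $0$, $S$ is a rational scroll: either an isomorphic linear projection to $\PP^5$ of a minimal-degree scroll in $\PP^6$ (so $S\cong F_1$ or $F_3$), or a non-normal surface whose normalization has $\Delta$-genus $0$, or a cone. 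In these cases $\omega_S$ is \emph{not} a twist of $\Oo_S(1)$, your adjunction conclusion $\omega_C \cong \Oo_C(3)$ is false, and a different argument is required. The paper treats these cases by proving that $\omega_S(2)$ (or $\pi_\ast(\omega_{S'})(2)$ in the singular case) has a non-zero section with non-empty zero locus $T$; then $\omega_C \cong \omega_S(4)_{\vert C} \cong \Oo_C(2)\otimes\Oo_C(T_{\vert C})$ with $T_{\vert C}\ne 0$ since $C$ is ample on $S$, contradicting $\omega_C\cong\Oo_C(2)$.

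So your argument is correct and clean for the del~Pezzo half of the classification, but the rational-scroll half (sectional genus $0$) is genuinely missing, and your assertion that ``in all cases the generic hyperplane section is a quintic curve of arithmetic genus $1$'' is the point where the proof breaks.
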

\begin{proof}
Assume $\deg (S) = 5$ and then we have $\deg (S) =\mathrm{codim} (S) +2$. So $S$ is an almost minimal varieties. Note that if $S$ is smooth, then $h^1(\omega _S(1)) =0$.

\quad (a) Assume $h^0(\Oo _S(1)) =6$, i.e. that the $\Delta$-genus of $S$ is $2+5-6=1$ (in the sense of \cite{f1} and \cite{f2}). Let $H\subset \PP^5$ be a general hyperplane. Since $S$ has only finitely many singularities, $H\cap S$ is a smooth curve of degree $5$ spanning $H= \PP^5$. Thus $H\cap S$ has genus $0$ or $1$. If $H\cap S$ has genus $1$, then $(S,\Oo _S(1))$ has sectional genus $1$. Since $\Oo _S(1)$ is very ample, we have $\omega_S \cong \Oo_S(-1)$ by \cite[Corollary 6.5 at page 46]{f2} and so $\omega _C\cong \Oo _C(3)$, a contradiction. Now assume that $H\cap S$ has genus $0$. Since $S$ has only finitely many singular points with none of them on $C$, so the torsion free sheaf $\omega _S$ is defined and $\omega _C \cong \omega _S(4)_{\vert_C}$. Since $C$ is an ample divisor on $S$, to get a contradiction, it is sufficient to prove that $\omega _S(2)$ has a section whose zero-locus $T$ is nonempty. For any integer $t\in \ZZ$ we have an exact sequence
\begin{equation}\label{eqb1}
0 \to \omega _S(t-1) \to \omega _S(t) \to \omega _{S\cap H}(t-1) \to 0.
\end{equation}
Since $S\cap H$ is a smooth rational curve of degree $5$, so we have $h^0(\omega _{S\cap H}) =0$, $h^1(\omega _{S\cap H}(i)) =0$ for all $i>0$ and $h^1(\omega _{S\cap H}) =1$. Therefore $h^0(\omega _S(1)) =0$ and the restriction map $H^0(\omega _S(2)) \to H^0(\omega _S(2)_{\vert_{S\cap H}})$ is injective. Since each section of $\omega _{S\cap H}(1)$ has at least one zero, it is sufficient to prove $H^0(\omega _S(2)) \ne 0$. By \cite[9.2 at page 78]{f2}, $S$ is not
normal and its normalization $S'$ has $\Delta$-genus $0$. Thus for the pull-back $\Rr_{S'}$ of $\Oo _S(1)$ to $S'$, we have $h^0(\Rr_{S'}) =7$. If $\Rr_{S'}$ is very ample with image not a cone, we can get a contradiction as in (b) ($S$ is smooth at the points of $C$ and so we may use that formula to compute $\omega _C =\omega _S(4)_{\vert_C}$). Since the map $S'\to S$ is finite, $\Rr_{S'}$ is spanned and it induces a finite birational morphism; hence if the image of $S'$ is a cone, then $S$ is a cone over a non-linearly normal smooth rational curve of $\PP ^4$. To compute $\omega _S(2)$ we may use the associated cone $S''\subset \PP^6$, because $C$ does not contain the vertex of $S$.
$S''$ is the image of $S' = F_5$ by the complete linear system $|\Oo _{F_5}(h+5f)|$ and we have
$\omega _{F_5} \cong \Oo _{F_5}(-2h-7f)$. We get again $\omega _S(2) \cong \Oo_{F_3}(3f)$, a contradiction.

\quad (b) Assume $h^0(\Oo _S(1)) \ge 7$. We are in the case of $\Delta$-genus $0$ and necessarily we have $h^0(\Oo_S(1)) =7$. $S$ is now an isomorphic projection from a minimal degree surface $S'$ of $\PP^6$; since this is linear projection, $S'$ is not a cone and hence it is a smooth surface scroll either isomorphic to $F_1$ embedded by the complete linear system $|h+3f|$ or to $F_3$
embedded by the complete linear system $|h+4f|$. In the first case we have $\omega _S \cong \Oo _S(-2h-3f)$ and so $h^0(\omega _S(2)) = 4$, excluding this case for $C$. In the second case we have $\omega _S\cong \Oo _S(-2h-5f)$ and so $h^0(\omega _S(2)) =4$, again excluding this case.
\end{proof}

\begin{lemma}\label{lemma5.4}
We have $\deg (S)\ne 6$.
\end{lemma}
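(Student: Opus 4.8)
The plan is to imitate the proof of Lemma \ref{lemma5.3}, but to observe that the numerics close the case in one stroke. So assume $\deg(S)=6$. As recorded in the discussion preceding Lemma \ref{lemma5.3}, in this situation $C$ is the scheme-theoretic intersection of $S$ with a quartic hypersurface; since $S\subseteq U_2$ and $S$ is a reduced irreducible component of $\Psi$ with $\Psi\cap X=C$, in fact $C=S\cap U_4$ and $C\in|\Oo_S(4)|$. Hence $\deg(C)=4\cdot 6=24$, and from $\omega_C\cong\Oo_C(2)$ we get $2p_a(C)-2=2\deg(C)=48$.

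First I would pass to the normalization $\nu\colon S'\to S$. Because $S$ is smooth along $C=S\cap X$, the morphism $\nu$ is an isomorphism over a neighbourhood of $C$; set $C':=\nu^{-1}(C)\cong C$ and $H':=\nu^*\Oo_S(1)$. Then $H'$ is a base-point-free line bundle on the normal surface $S'$, $(H')^2=\deg(S)=6$ (intersection numbers pull back under the birational $\nu$), and $C'\sim 4H'$ on $S'$ (it is the zero scheme of the pull-back of the quartic form defining $U_4$). Moreover $C'$ lies in the smooth locus of $S'$ and $\omega_{C'}\cong\omega_C\cong\Oo_C(2)$, so $\deg\omega_{C'}=48$. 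Adjunction for the Cartier divisor $C'\subset S'$ gives $\deg(\omega_{S'}|_{C'})=48-\deg\bigl(\Oo_{S'}(4H')|_{C'}\bigr)=48-16(H')^2=-48$, and since $C'\sim 4H'$ this reads $\omega_{S'}\cdot H'=-12$.

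Next take a general member $D\in|H'|$. As $\Oo_S(1)$ is very ample, $\phi_{|H'|}$ is the finite birational morphism $S'\xrightarrow{\nu}S\hookrightarrow\PP^5$, so $D$ is the pull-back to $S'$ of a general hyperplane section of $S$; by Bertini (char $0$, $S$ irreducible of dimension $2$, $\Sing(S')$ finite) $D$ is a smooth irreducible curve contained in the smooth locus of $S'$. Adjunction for $D\subset S'$ then yields $2p_a(D)-2=(\omega_{S'}+H')\cdot H'=\omega_{S'}\cdot H'+(H')^2=-12+6=-6$, i.e.\ $p_a(D)=-2$, which is impossible for a curve. Therefore $\deg(S)\neq 6$. (The same computation with $\deg S$ in place of $6$ instantly rules out $\deg S\in\{5,7\}$ as well, giving an alternative to the case analysis of Lemma \ref{lemma5.3}.)

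The step I expect to need the most care is the intersection-theory bookkeeping on $S'$ when $S$ is singular — in particular non-normal along a curve, or a cone: one must verify that $C'$ and a general $|H'|$-member really do avoid $\Sing(S')$, that $(H')^2=\deg S$ and the linear equivalence $C'\sim 4H'$ transport correctly through $\nu$, and that $|H'|$ moves enough for Bertini to apply. This is precisely the bookkeeping handled case-by-case in Lemma \ref{lemma5.3} via Fujita's classification of surfaces of small $\Delta$-genus and small sectional genus (using the bound on the genus of the degree-$6$ general hyperplane section in $\PP^4$, and explicit computations on Hirzebruch-surface normalizations); if one prefers to sidestep adjunction on singular surfaces one can run that longer argument instead, splitting according to $h^0(\Oo_S(1))\in\{6,7,8\}$ and the sectional genus, and in each case computing $\omega_S$ and checking $\omega_S(4)|_C\not\cong\Oo_C(2)$.
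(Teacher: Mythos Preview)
Your core argument is correct and is genuinely different from the paper's. The paper proceeds by first pinning down the sectional genus: it takes a general hyperplane section $D=S\cap H$, observes $D$ is a smooth nondegenerate degree-$6$ curve in $\PP^4$, so $g(D)\le 2$ by Castelnuovo, and then invokes the Berzolari trisecant formula (there are no trisecants since $S$ is cut out by quadrics) to force $g(D)=2$. With this in hand it uses Kodaira vanishing (or Grauert--Riemenschneider in the singular case, via a desingularization $\pi\colon T\to S$) to show $h^0(\pi_\ast\omega_T(2))\ge 2$, and derives the contradiction from the fact that $\pi_\ast(\omega_T)(2)|_C\cong\Oo_C$ yet $C$, being ample, must meet the zero locus of a nonconstant section. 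Your route bypasses all of this: from $C\in|4H'|$ and $\omega_C\cong\Oo_C(2)$ you read off $K_{S'}\cdot H'=-2\deg S$ directly, and then adjunction for a general $D'\in|H'|$ forces $2p_a(D')-2=-\deg S<0$. This is more elementary (no Berzolari, no vanishing theorems) and, as you note, kills $\deg S\in\{5,6,7\}$ uniformly, whereas the paper treats each case by a separate and longer argument. The intersection-theory bookkeeping you worry about is fine: $S'$ is a normal (hence Cohen--Macaulay) surface, so adjunction for Cartier divisors holds, and since $C'$ and a general $D'$ lie in the smooth locus the degrees $K_{S'}\cdot C'$ and $K_{S'}\cdot D'$ are computed by restricting the reflexive sheaf $\omega_{S'}$, with the linearity $K_{S'}\cdot(4H')=4(K_{S'}\cdot H')$ coming from the standard intersection pairing of Weil with Cartier divisors on a normal surface.

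Two small remarks on the write-up. First, the opening line and the final paragraph mischaracterize the paper's Lemma~\ref{lemma5.3}: that proof does not do this adjunction bookkeeping but instead classifies degree-$5$ surfaces via Fujita's $\Delta$-genus theory and computes $\omega_S$ explicitly on Hirzebruch-surface models, so your proposed ``fallback'' is not really the paper's method. Second, your sentence identifying $\phi_{|H'|}$ with $S'\to S\hookrightarrow\PP^5$ is not quite right when $S$ is non-normal (then $h^0(S',H')$ may exceed $h^0(S,\Oo_S(1))$); but you do not need this --- it suffices to take $D'$ to be the pullback of a general hyperplane section of $S$, which is smooth, irreducible, and avoids both $\Sing S$ and the finite locus over which $\nu$ fails to be an isomorphism.
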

\begin{proof}
Assume $\deg (S) =6$. Again, to get a contradiction we need to prove that $H^0(\omega _S(2))$ has a non-zero section with non-zero locus. Let $H\subset \PP^5$ be a general hyperplane. Since $S$ has finitely many singular points, $D$ is a smooth, irreducible and nondegenerate curve of $H =\PP^4$. Let $g(D)$ be its genus. Since $\pi (6,4) =2$, we have $g(D)\le 2$. Since $S$ is cut out scheme-theoretically by quadrics, there is no line $L\subset H$ with $\deg (L\cap D)\ge 3$ ($S$ may contain line, but if $L$ is a line with $\deg (L\cap S)\ge 3$, then $L\subset S$). The Berzolari formula in \cite{ho} for the number of trisecant lines to $D$ gives $g(D)=2$ and so $D$ is projectively normal. Since $h^0(\Oo _D(1)) =5$, then $h^0(\Oo _S(1)) =6$. Since $h^0(H,\Ii _D(2)) =4$, we get $h^0(\Ii _S(2)) =4$.

If $S$ is smooth, then $h^1(\omega _S(1))=0$ by the Kodaira's vanishing theorem and so $h^0(\omega _S(2)) =h^0(\omega _D(1)) \ge 2$, excluding this case.

Assume that $S$ has at least one singular point and let $\pi : T \to S$ be a desingularization map. By the Grauert-Riemenschneider vanishing theorem in \cite{ev}, the sheaf $\pi _\ast (\omega _T)$ on $S$ satisfies the usual Kodaira's vanishing statements, i.e. $h^i(\pi _\ast (\omega _T)\otimes \Rr) =0$ for all $i>0$ and all ample line bundles $\Rr$ on $S$. In particular we have $h^1(\pi _\ast (\omega _T)(1)) =0$. Since $S$ is smooth at all points of $C$, so $\omega _S$ and $\pi _\ast (\omega _T)$ are the same in a neighborhood of $C$. Thus we have $\pi _\ast (\omega _T)(4)_{\vert_C} \cong \omega _C$. Therefore it is sufficient, as in the smooth case, to prove the existence of a non-zero section of $\pi _\ast (\omega _T)(2)$. We have $\pi _\ast (\omega _T)_{\vert_D} \cong {\omega _S}_{\vert_D}$, because $S$ is smooth in a neighborhood of the hyperplane section $D$. Since $S$ is smooth in a neighborhood of $D$,
we have the exact sequence
$$0 \to \pi _\ast (\omega _T)(1) \to \pi _\ast (\omega _T)(2) \to \omega _D(1)\to 0,$$ which gives $h^0(\pi _\ast (\omega _T)(2)) =7$.
\end{proof}

\begin{lemma}\label{lemma5.5}
We have $\deg (S) \ne 7$.
\end{lemma}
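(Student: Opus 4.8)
The plan is to proceed exactly as in the proof of Lemma \ref{lemma5.4}, reducing the statement to an incompatibility between the Castelnuovo genus bound and the Berzolari count of trisecant lines; I expect this to be the quickest of the three degree cases. First, suppose for contradiction that $\deg(S)=7$. Since $S$ is smooth along $C$, and $C$ is an ample divisor on $S$ which therefore meets every curve contained in $S$, the singular locus of $S$ must be finite; hence for a general hyperplane $H\subset\PP^5$ the section $D:=S\cap H$ is a smooth, irreducible and non-degenerate curve of degree $7$ in $H=\PP^4$. Because $S$, and therefore $D$, is cut out scheme-theoretically by quadric hypersurfaces, $D$ admits no line $L$ with $\deg(L\cap D)\ge 3$: such an $L$ would lie in every quadric through $D$, hence in $D$ itself, which is impossible for an irreducible curve of degree $7$.

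The decisive step is then purely numerical. By the Castelnuovo bound \cite[Chapter III]{he} we have $g(D)\le\pi(7,4)=3$. On the other hand, the Berzolari formula in \cite{ho} computes the number of trisecant lines of a smooth non-degenerate curve of degree $d$ and genus $g$ in $\PP^4$ as $\binom{d-2}{3}-g(d-4)$, which for $d=7$ is $10-3g(D)$. Since $0\le g(D)\le 3$, this number is at least $1$, so $D$ carries a line meeting it with total multiplicity at least $3$, contradicting the previous paragraph. Hence $\deg(S)\ne 7$, and together with Lemmas \ref{lemma5.3} and \ref{lemma5.4} this excludes $\dim(\Phi)=2$ in the connected case.

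The only points requiring the same care as in Lemma \ref{lemma5.4} are the finiteness of $\op{Sing}(S)$, so that the general hyperplane section is smooth, and the fact that quadric-generation descends from $S$ to $D$; neither presents a real obstacle. In contrast to the cases $\deg(S)=5$ and $\deg(S)=6$, no residual subcase here --- a weak del Pezzo surface, a rational scroll, or a cone --- survives, and no analysis of $H^0(\omega_S(2))$ is needed: the whole content is that the Berzolari expression $10-3g$ stays strictly positive throughout the entire Castelnuovo range $0\le g\le 3$.
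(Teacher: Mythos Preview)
Your argument is correct, and it reaches the same contradiction as the paper --- the existence of a trisecant line to the hyperplane section $D=S\cap H$, which is impossible for a curve cut out by quadrics --- but by a genuinely different route.

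The paper proceeds via linkage: since $\deg(S)=7=2^3-1$, the surface $S$ is residual to a plane $M$ inside a complete intersection $E$ of three quadrics. Restricting to a general hyperplane $H$ and setting $F=E\cap H$, $D=S\cap H$, $L=M\cap H$, the paper uses $\omega_F\cong\Oo_F(1)$ together with the incompatibility $\omega_L\not\cong\Oo_L(1)$ to force $\deg(L\cap D)\ge 3$. Thus the specific line $L$ is exhibited as a trisecant, and consequently every quadric through $S$ contains the plane $M$, contradicting that $S$ itself is the quadric base locus. Your approach instead stays entirely at the level of $D$: the Castelnuovo bound $g(D)\le\pi(7,4)=3$ combined with the Berzolari count $\binom{5}{3}-3g(D)=10-3g(D)\ge 1$ shows that \emph{some} trisecant must exist, without ever identifying it. Your method is more uniform with the proof of Lemma~\ref{lemma5.4} and shorter; the paper's method is more explicit, pinpointing the geometric obstruction (the residual plane cannot be separated from $S$ by quadrics) rather than appealing to an enumerative formula. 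Both are valid, and the hypotheses you flag --- finiteness of $\op{Sing}(S)$ and descent of quadric-generation to $D$ --- are exactly those already used and justified in the surrounding lemmas.
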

\begin{proof}
In the case $\deg (S) =2^3-1$ it is linked to a plane $M$ by the complete intersection $E$ of $3$ quadrics. But this case does not exist. Fix a general hyperplane $H\subset \PP^5$ and set $D:= S\cap H$ and $L:= M\cap H$. Recall that $S$ is cut out by quadrics; to get a contradiction it is sufficient to prove that every quadric containing $S$ contains $M$. So it is sufficient to prove that every quadric hypersurface of $H$ containing $S$ contains the line $L$. Therefore it is sufficient to prove that $\deg (L\cap D) \ge 3$.

Set $F:= E\cap H$. Since $F$ is the complete intersection of 3 quadrics of $H$, then $\omega _F \cong \Oo _F(1)$. In our set-up $S =\Phi$ has only isolated singularities and hence $D$ is a smooth curve; therefore $F$ has planar singularities and the degree of $L\cap D$ is obvious we only need $\deg (L\cap D)\ge 3$ and this is true for the following reason; we have $L\cap D\ne \emptyset$, because $\omega _L\ne \Oo _F(1)$; $L$ does not intersect transversally $D$ at one point (resp. two points), because $\omega _L(1) \ne \Oo _F(1)$ (resp. $\omega _L\ne \Oo _F(1)$). The case when $D\cap L$ is the unique point at which $L$ is an ordinary tangent of $D$ is excluded, because $\omega _L\ne \Oo _F(1)$.
\end{proof}

\subsection{Case of $s\geq 2$}$ $

Assume $s\ge 2$ and then we may assume $d\geq 17$. Let $E\subset \PP^5$ be the complete intersection of $3$ general quadric hypersurfaces containing $C$. Since $C$ is cut out in $\PP^5$ by some quadric hypersurfaces and a unique quartic hypersurface, we have $\dim (E)=2$, i.e. $E$ is a complete intersection surface and $\deg (E)=8$, not necessarily reduced.

Let $E_1,\dots ,E_z$ be the irreducible components of $E_{red}$, i.e. $z$ is the number of the irreducible components of $E_{red}$, and then we have $\sum _i \deg (E_i) \le 8$.  By rearranging the indices, we can take $E_1,\dots ,E_w$ contained in $\Psi$ for $0 \le w \le z$. Since $C$ is cut out in $\PP^5$ by some quadric hypersurfaces and a unique quartic hypersurface, we have
$$d\le \sum _{i=1}^{w} 4\deg (E_i) +\sum _{i=w+1}^{z} 2\deg (E_i).$$
Since $\omega _C \cong \Oo _C(2) \ne \Oo _C(3)$ and so $C$ is not the scheme-theoretic intersection of $E$ and a quartic hypersurface, so we have $z>w$. In particular we have $d \le 30$. If $\deg (E_j) =1$ for some $j$, then $E_j$ cannot contribute to $C$, because no plane curve $D$ of degree $2$ or $4$ has $\omega _D \cong \Oo _D(2)$ and so we have $d\le 28$.

Let $S_1,\dots ,S_k$ with $k>0$ be the positive dimensional irreducible components of the reduction of the base locus of $|\Ii _{C,\PP^5}(2)|$ (a priori the base scheme may have multiple components and isolated points). Since $C$ is cut out in $\PP^5$ by some quadric hypersurfaces and a unique quartic hypersurface, we have $1 \le \dim (S_i)\le 2$. Let $h$ be the number of components with $\dim (S _i)=2$. Call $U$ any quartic hypersurface such that $C = (S_1\cup \cdots \cup S_k)\cap U$. By reordering we may assume $\dim (S_i) = 2$ if and only if $i \le h$. Notice that if $i \le h$, then $S_i = E_j$ for some $j \le w$. We also have $w=h$. A component $S_i$ with $i>h$, i.e. $\dim (S_i)=1$, either disappear taking the intersection with a quadric hypersurface (i.e. its intersection with a quartic hypersurface is contained in some other $S_j)$ or it is a connected component of $C$. Since $d \ge 17$, so we may assume $h = w>0$. No $S_i$ may be a plane, because a quartic plane curve is canonically embedded. Fix an integer $i$ with $1\le i \le h$. The proof of Lemmas \ref{lemma5.3}, \ref{lemma5.4} and \ref{lemma5.5} work verbatim by taking $S_i\cap U_4$ instead of $C$ (we do not claim that $S_i\cap U_4$ is the union of all connected components of $C$, only of some of them). Hence we have $\deg (S_i)  \in \{2,3,4\}$.

\begin{lemma}
We have $\deg (S_i) \ne 3$.
\end{lemma}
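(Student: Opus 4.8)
The plan is to assume $\deg(S_i)=3$ and to derive a contradiction by computing the arithmetic genus of $C':=S_i\cap X$ in two ways. The first observation is that $S_i\subseteq U_2$: since $U_2$ contains $X\supseteq C$ it lies in $H^0(\PP^5,\Ii_{C,\PP^5}(2))$, hence it contains the base locus $\Psi$, which contains $S_i$. Therefore $C'=S_i\cap X=S_i\cap U_4$; as $S_i\cap X$ is a curve, $S_i\not\subseteq U_4$, so $C'$ is a purely $1$-dimensional divisor on $S_i$ of degree $3\cdot 4=12$.

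Next I would pin down $S_i$. It is not a plane (a plane quartic is canonically embedded, so $\omega_{C'}\not\cong\Oo_{C'}(2)$), and it cannot lie in a $\PP^3$: otherwise it would be contained in the quadric surface cut on that $\PP^3$ by $U_2$, which is impossible for a cubic surface, because $U_2$ has at most an isolated singularity and so contains no $\PP^3$. Since a non-degenerate surface of $\PP^5$ has degree at least $4$, this forces $\langle S_i\rangle=\PP^4$, so $S_i$ is a surface of minimal degree in $\PP^4$; by the classical classification it is either the smooth cubic scroll, the image of the Hirzebruch surface $F_1$ under $|h+2f|$, or the cone over a rational normal cubic of $\PP^3$, the image of $F_3$ under the morphism attached to $|h+3f|$, which contracts the negative section $h$ to the vertex $v$. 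I will use the conventions for $F_n$ from the proof of Proposition \ref{X5_2}, so $h^2=-n$, $h\cdot f=1$, $f^2=0$ and $K_{F_n}=-2h-(n+2)f$.

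Then I would carry out the two genus computations. Since $C'\subseteq\Psi\cap X=C$ with $C$ smooth and $1$-dimensional and $C'$ purely $1$-dimensional, $C'$ is a disjoint union of connected components of $C$; in particular $C'$ is smooth with $\omega_{C'}\cong\Oo_{C'}(2)$. As every connected component of $C$ has degree at least $8$ and $12<16$, the curve $C'$ is connected, so $2p_a(C')-2=\deg\omega_{C'}=2\deg(C')=24$, whence $p_a(C')=13$. On the other hand, let $\sigma\colon\Sigma\to S_i$ be the minimal resolution ($\Sigma=F_1$ with $\sigma$ an isomorphism in the scroll case, $\Sigma=F_3$ in the cone case) and put $H:=\sigma^\ast\Oo_{S_i}(1)$, so that $H=h+2f$ on $F_1$ and $H=h+3f$ on $F_3$. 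The strict transform $\wt{C'}\cong C'$ lies in $|4H|$, so adjunction on the smooth surface $\Sigma$ yields
$$2p_a(C')-2=(4H)\cdot(K_\Sigma+4H)=28,$$
the intersection number being $28$ in both cases; hence $p_a(C')=15$, contradicting $p_a(C')=13$.

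The step I expect to require the most care is, in the cone case, showing that $v\notin C'$, which is what guarantees $\wt{C'}\cong C'$ and that $\wt{C'}\in|4H|$ without a spurious $h$-component. I would argue as follows: every quadric hypersurface of $\PP^5$ containing the cone $S_i$ is singular at $v$, because its affine cone contains the affine cone over $S_i$, which spans $\AA^5$, so the quadratic form vanishes on a spanning set and is therefore singular at the origin. Since $v\in S_i\subseteq\Psi$, all quadrics in $H^0(\Ii_{C,\PP^5}(2))$ are singular at $v$; as $C$ is scheme-theoretically cut out in $X$ by the restrictions of these quadrics, $v\in C$ would force $T_vC=T_vX$, contradicting the smoothness of $C$. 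The rest is the routine Hirzebruch-surface bookkeeping already used in Propositions \ref{X5_2} and \ref{proppp}.
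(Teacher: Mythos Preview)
Your approach is the paper's: reduce to $\langle S_i\rangle=\PP^4$, so $S_i$ is a minimal-degree surface (the scroll $F_1$ or the cone), pull $C'=S_i\cap U_4$ back to the Hirzebruch surface, and get $2p_a(C')-2=28$ against the required $24$. The classification and the intersection numbers are correct.

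There is, however, a genuine gap in your cone case. You claim that every quadric hypersurface of $\PP^5$ containing the cone $S_i$ is singular at the vertex $v$, because ``its affine cone contains the affine cone over $S_i$, which spans $\AA^5$''. But $\langle S_i\rangle=\PP^4$, so the tangent space $T_vS_i$ is only a hyperplane of $\PP^5$; a quadric $Q\supset S_i$ may be smooth at $v$ with tangent hyperplane exactly $\langle S_i\rangle$. Concretely, with $\langle S_i\rangle=\{x_5=0\}$ and $v=[1{:}0{:}\cdots{:}0]$, the quadric $\{x_0x_5+q(x_1,\dots,x_4)=0\}$ contains $S_i$ whenever $q$ vanishes on the base twisted cubic, and it is smooth at $v$. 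So ``all quadrics through $C$ are singular at $v$, hence $T_vC=T_vX$'' does not go through.

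The conclusion $v\notin C'$ is nonetheless correct and costs nothing: you already know $C'\subseteq C$ is smooth, while any Cartier divisor on the cone $S_i$ through $v$ is singular there, because $\Oo_{S_i,v}$ has embedding dimension $4$ and cutting by one equation leaves embedding dimension at least $3$. This is also how the paper (tacitly) uses smoothness of $A_i$. Two minor remarks: the sentence ``$S_i$ is not a plane'' is redundant once $\deg(S_i)=3$; and your exclusion of $\langle S_i\rangle=\PP^3$ via $S_i\subseteq U_2$ is fine, but you should note that $U_2$ has at most isolated singularities because $U_2$ is smooth along the smooth $X=U_2\cap U_4$ and any positive-dimensional singular locus of $U_2$ would meet $U_4$.
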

\begin{proof}
Let $\deg (S_i ) =3$ for some $i\le h$. Since no cubic hypersurface of $\PP ^3$ is cut out by a single quartic hypersurface, the linear span $\langle S_i\rangle$ has at least dimension $4$. It implies $\dim \langle S_i\rangle =4$ and $S_i$ is a minimal degree surface of $\PP^4$. Thus $S_i$ is either a cone over a rational normal curve of $\PP^3$ or a smooth and complete embedding of $F_1$ by the linear system $|h+2f|$. Hence $A_i:= S_i\cap U_4 \in |4h+8f|$; any such curve is connected and so $A_i$ is a connected component of $C$. Since $\omega _{F_1} \cong \Oo _{F_1}(-2h-3f)$, we get $\omega _{A_i} \cong \Oo _{A_i}(2h+5f)$. So we have $\deg (A_i) = 12$ and
$$2p_a(A_i)-2 = (4h+8f)\cdot (2h+5f)= -8+20+16=28;$$
it is not admissible as genus. Now assume that $S_i$ is a cone with vertex $o$ and let $\pi: T\to S_i$ be the minimal desingularization. We have $T\cong F_3$ with $h = \pi ^{-1}(o)$ and $\pi$ is induced by the complete linear system $|h+3f|$. We have $A_i\in |4h+12f|$; again it is connected. Since $\omega _{F_3} \cong \Oo _{F_3}(-2h-5f)$ and $A_i$ does not pass through the vertex $o$ of $S_i$ we get $2p_a(A_i)-2 = (4h+12f)\cdot (2h+7f) = 28$, a contradiction.
\end{proof}

\begin{lemma}
We have $\deg (S_i) \ne 4$.
\end{lemma}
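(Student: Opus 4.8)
The plan is to assume $\deg(S_i)=4$ for some $i\le h$ and to reach a contradiction, splitting into cases according to $\dim\langle S_i\rangle$ and computing $\omega_{A_i}$ by adjunction in each. Throughout I will use that $A_i:=S_i\cap U_4$ is a divisor in $|\Oo_{S_i}(4)|$ of degree $4\deg(S_i)=16$ which, being a one-dimensional closed subscheme of the smooth curve $C$, is a union of connected components of $C$; hence $A_i$ is smooth, nonempty, and $\omega_{A_i}\cong\Oo_{A_i}(2)$. Since $\deg(S_i)=4$, $S_i$ is not a plane, so $3\le\dim\langle S_i\rangle\le 5$.

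First I would dispose of the case $\langle S_i\rangle=\PP^3$, where $S_i$ is an irreducible quartic surface of $\PP^3$. A quadric of $\PP^5$ that does not contain $\langle S_i\rangle$ cuts out there a quadric surface, which cannot contain the quartic $S_i$; since every quadric through $C$ contains $S_i\subseteq\Psi_{red}$, every such quadric must contain all of $\langle S_i\rangle$, so $\langle S_i\rangle\subseteq\Psi$. But then $S_i$ is properly contained in a $3$-dimensional linear subspace of $\Psi_{red}$, hence is not an irreducible component of $\Psi_{red}$, a contradiction. (Equivalently, inside $\langle S_i\rangle$ the curve $A_i$ is the complete intersection of $S_i$ with the restriction of $U_4$, so $\omega_{A_i}\cong\Oo_{A_i}(4)$, contradicting $\omega_{A_i}\cong\Oo_{A_i}(2)$.)

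For $\dim\langle S_i\rangle\in\{4,5\}$ I would invoke the classification of surfaces of minimal degree in $\PP^5$ (resp.\ of degree $4$ in $\PP^4$), which is exactly the input already used in Lemmas \ref{lemma5.3}--\ref{lemma5.5}: if $\langle S_i\rangle=\PP^5$ then $S_i$ is the Veronese surface, a rational normal scroll $S(1,3)$ or $S(2,2)$, or a cone over the rational normal quartic; and if $\langle S_i\rangle=\PP^4$, then arguing as in the span-$3$ case $S_i$ must lie on a quadric threefold of $\PP^4$ (otherwise $\langle S_i\rangle\subseteq\Psi$ once more), so $S_i$ is a $(2,2)$ complete intersection del Pezzo surface, a cone over an elliptic normal quartic in $\PP^3$, or a projection to $\PP^4$ of one of the minimal surfaces listed above. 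In each case I would pass to the minimal desingularization $\pi:T\to S_i$ — with $T$ one of $\PP^2$, a Hirzebruch surface $F_0,F_2,F_3,F_4$, a del Pezzo surface, or a ruled surface over an elliptic curve — check that the strict transform $\widetilde{A_i}$ of $A_i$ is a smooth curve isomorphic to $A_i$ lying in $|\pi^{*}\Oo_{S_i}(4)|$, and run adjunction on $T$ as in Lemmas \ref{lemma5.3}--\ref{lemma5.5}, but more simply: in every case one finds that $\omega_{A_i}$ and $\Oo_{A_i}(2)$ differ by a line bundle of nonzero degree on $A_i$ (for the del Pezzo, $\omega_{S_i}\cong\Oo_{S_i}(-1)$ gives $\omega_{A_i}\cong\Oo_{A_i}(3)$; for the Veronese, the scrolls and the cones the difference is a nonzero multiple of $\Oo_{\PP^2}(1)$, respectively of a ruling class, restricted to $A_i$). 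This contradicts $\omega_{A_i}\cong\Oo_{A_i}(2)$ and completes the proof.

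The step I expect to be the main obstacle is the handling of the degenerate members of the above lists — the non-normal projected surfaces and the cones. For these one must verify that $A_i$ contains no component of the singular locus of $S_i$, so that the adjunction computation may legitimately be transferred to $T$: a component $\Gamma_0$ of the double curve contained in $A_i$ would be a connected component of $C$, hence a curve with $\deg(\Gamma_0)=p_a(\Gamma_0)-1$, which is impossible because such a $\Gamma_0$ is a rational or elliptic curve of small degree. For the cones, $A_i$ avoids the vertex because a general quartic section of $S_i$ does and $U_4$ is taken general enough for $C$ to be smooth. Once this bookkeeping and the appeal to the surface classification are in place, the remaining computations are routine and completely parallel to those of Lemmas \ref{lemma5.3}--\ref{lemma5.5}.
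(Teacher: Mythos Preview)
Your overall strategy --- split on $m=\dim\langle S_i\rangle$, identify $S_i$ up to deformation type, and kill each case by adjunction on (a resolution of) $S_i$ --- is the same as the paper's, and your inclusion of the Veronese surface among the minimal-degree surfaces for $m=5$ is a genuine improvement over the paper's list, which treats only the three scroll types $F_0$, $F_2$, and the cone resolved by $F_4$. Your observation that smoothness of $A_i\subset C$ forces $A_i\cap\mathrm{Sing}(S_i)=\emptyset$ (hence $S_i$ has only isolated singularities and the adjunction may be computed on the resolution) is also correct and handles your ``main obstacle'' cleanly.

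The substantive difference is in the case $m=4$. You argue only that $S_i$ lies on \emph{some} quadric threefold of $\langle S_i\rangle\cong\PP^4$ and then invoke a classification of irreducible quartic surfaces in $\PP^4$ (del Pezzo, cones, projected minimal surfaces). This can be made to work but is heavier than needed, and your list is asserted rather than derived: on a singular quadric threefold the divisor class group is larger than $\ZZ$, so ``lies on a quadric'' does not by itself reduce to your three items. The paper instead exploits the ambient constraint $S_i\subset E$, where $E$ is a complete intersection of three quadrics in $\PP^5$. Restricting those three quadrics to $\langle S_i\rangle=\PP^4$, the common zero locus there contains the surface $S_i$, so at most two of the restrictions are linearly independent; since $\deg(S_i)=4$ this forces $S_i$ to \emph{equal} a complete intersection of two quadrics in $\PP^4$. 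Adjunction then gives $\omega_{S_i}\cong\Oo_{S_i}(-1)$ and $\omega_{A_i}\cong\Oo_{A_i}(3)$ in one line, with no surface classification at all. The same restriction argument also rules out $m=3$ immediately (three quadrics of $\PP^3$ cannot contain an irreducible quartic surface), so your separate treatment of that case, while correct, is avoidable. One small slip: $F_3$ does not arise among the resolutions in degree $4$; the relevant Hirzebruch surfaces here are $F_0$, $F_2$, and $F_4$.
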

\begin{proof}
Let $\deg (S_i) =4$ for some $i\le h$. Setting $m:= \dim (\langle S_i)\rangle$, we have $m\ge 4$ since $S_i \subset E$ and $E \cap \langle S_i\rangle$ is cut out by quadrics. If $m=4$, we have $\omega _{S_i}\cong \Oo _{S_i}$, because $S_i\subset E\cap \langle S_i\rangle$ and $E \cap \langle S_i\rangle$ is the complete intersection of two quadric hypersurfaces of $\langle S_i\rangle$. Thus we have $\omega _{S_i\cap U_4} \cong \Oo _{S_i\cap U_4}(4)$, a contradiction.

Now assume $m=5$ and then $S_i$ is a surface of minimal degree in its linear span. Up to projective transformation $S_i$ is obtained in one of the following ways; in all cases $A_i = S_i\cap U_4$ is connected.

 \quad (a) Take $F_0$ embedded by the complete linear system $|h+2f|$ and $A_i\in |4h+8f|$; we have $\omega _{F_0} \cong \Oo _{F_0}(-2h-2f)$ and so $\omega _{A_i} \cong \Oo _{A_i}(2h+6f)$. We get $\deg (A_i) =16$ and $2p_a(A_i) -2 = 40 \ne 32$, a contradiction.

 \quad (b)  Take $F_2$ embedded by the complete linear system $|h+3f|$ and $A_i\in |4h+12f|$; we have $\omega _{F_2} \cong \Oo _{F_2}(-2h-4f)$ and so $\omega _{A_i} \cong \Oo _{A_i}(2h+8f)$. We get $\deg (A_i) =16$ and $2p_a(A_i) -2 = 40 \ne 32$, a contradiction.

 \quad (c) If $S_i$ is singular, then $S_i$ is a cone over a rational normal curve of $\PP^4$. Since
 $A_i =S_i\cap U_4$ scheme-theoretically and $A_i$ is smooth, so $A_i$ does not contain the vertex $o$ of $S_i$. If $\pi : T\to S_i$ is a minimal desingularization of $S_i$, then we have $T\cong F_4$ with $h = \pi ^{-1}(o)$ and $\pi $ is induced by the linear system $|h+4f|$. Note $\pi ^{-1}(A_i) \cong A_i$ and $\pi ^{-1}(A_i) \in |4h+16f|$. Since $\omega _{F_4} \cong \Oo _{F_4}(-2h-6f)$, we have $\omega _{\pi ^{-1}(A_i)} \cong \Oo _{\pi ^{-1}(A_i)}(2h+10f)$ and so $2p_a(A_i)-2 = 40 \ne 32$, a contradiction.
\end{proof}

\begin{lemma}
We have $\deg (S_i) \ne 2$, except when $k=h=2$ with $d=16$.
\end{lemma}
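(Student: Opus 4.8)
The plan is to show that, under the standing assumptions $s\ge 2$ and $d\ge 17$ of this subsection, no surface component can have degree $2$; the only configuration with $\deg(S_i)=2$ is $k=h=2$, $C=A_1\sqcup A_2$ with $d=16$, which is exactly Example \ref{b1} and is already forbidden by $d\ge17$.

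First I would determine the shape of each surface component. Since a reduced irreducible non-degenerate surface of $\PP^N$ has degree $\ge N-1$, the surface $S_i$ spans a linear $3$-space $M_i:=\langle S_i\rangle\cong\PP^3$, and as no $S_j$ is a plane it is an irreducible quadric surface in $M_i$. Because $S_i\subseteq\Psi\subseteq U_2$, the quadric surface $M_i\cap U_2$ of $M_i$ contains $S_i$, hence equals it, so $X\cap M_i=S_i\cap U_2\cap U_4=S_i\cap U_4=:A_i$ is a complete intersection curve of type $(2,4)$ in $M_i$; adjunction gives $\omega_{A_i}\cong\Oo_{A_i}(2)$, $\deg A_i=8$, $p_a(A_i)=9$, and $A_i$ is connected, so it is one of the connected components of $C$. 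By the previous lemmas every $S_j$ ($j\le h$) has degree $2$, so this applies to all of them; as $S_1,\dots,S_h$ are distinct irreducible components of the degree-$8$ complete intersection surface $E$ we get $2h=\sum_{i\le h}\deg(S_i)\le 8$. The $M_j$ are pairwise distinct, for $M_i=M_j$ would give $A_i\cup A_j\subseteq X\cap M_i$, impossible since $\deg(A_i\cup A_j)=16>8=\deg(X\cap M_i)$; and since $C$ is non-degenerate ($h^0(\Ee(-1))=0$) no $M_i\cup M_j$ lies in a hyperplane, forcing $\dim(M_i\cap M_j)=1$. Finally, since $C$ is smooth its components $A_i,A_j$ are disjoint, and a point of $A_i$ on the line $M_i\cap M_j$ would lie in $S_i\cap U_4$ and in $M_j\cap U_2\cap U_4=A_j$; hence $A_i$ misses $M_i\cap M_j$, this line is not a ruling of $S_i$, and $S_i\cap S_j\subseteq M_i\cap M_j$ consists of at most two points.

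Next I would rule out separate $1$-dimensional components of $\Psi$. A surviving such component is a connected component of $C$ of degree $\ge 8$; a degree-$8$ one is extremal in its span $\cong\PP^3$, lies on the unique quadric surface $T$ of that span, and as above $T\subseteq\Psi$ with $X\cap\langle T\rangle=T\cap U_4$ of degree $8$, so $T$ equals some $S_i$ and the component is just $A_i$; a component of degree $\ge 9$ would, by the same reasoning, force an extra surface component of $\Psi$ (the quadric surface it lies on if there is one, or $\langle\cdot\rangle\cap U_2$ when there is none, since then no quadric of $\PP^5$ restricts non-trivially to its span, placing that $\PP^3$ inside $\Psi$), contradicting either that $S_1,\dots,S_h$ are all the surface components of $\Psi$ or the fact that $\dim(\Psi\cap X)=1$. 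Therefore $C=A_1\sqcup\cdots\sqcup A_h$ and $d=8h$; together with $17\le d\le 28$ this forces $h=3$, $d=24$ (the alternative $h=2$, $d=16$ being Example \ref{b1}). The remaining step — eliminating $h=3$ — is the part I expect to be the main obstacle. Here every quadric of $\PP^5$ through $C$ contains $S_1\cup S_2\cup S_3$, so $h^0(\Ii_{C,\PP^5}(2))=h^0(\Ii_{S_1\cup S_2\cup S_3,\PP^5}(2))$. Using that each $S_i$ is a complete intersection of type $(1,1,2)$, whence $h^0(\Ii_{S_i,\PP^5}(2))=12$ and $H^1(\Ii_{S_i,\PP^5}(2))=0$, a Mayer--Vietoris computation along $S_1\cup S_2$ and then $S_1\cup S_2\cup S_3$, with the pairwise intersections of degree at most $2$, gives
\begin{equation*}
h^0(\Ii_{S_1\cup S_2\cup S_3,\PP^5}(2))\le\sum_{i<j}\deg(S_i\cap S_j)-6\le 0,
\end{equation*}
so $h^0(\Ii_C(2))=0$; but then the base locus of $|\Ii_{C,\PP^5}(2)|$ is all of $\PP^5$, contradicting $\Psi\cap X=C$. (Equivalently, even $h^0(\Ii_C(2))\le 2$ is impossible, since a curve cut out on $X$ by at most two quadrics has degree in $\{16,32\}$, not $24$; and $h^0(\Ii_C(2))=3$ forces the base locus of three quadrics through $C$ to be their degree-$8$ complete intersection, so $\Psi$ picks up either an extra reduced $2$-dimensional component or a non-reduced structure along some $S_i$, contradicting the component list or the smoothness of $C$.) This contradiction shows $\deg(S_i)=2$ can occur only when $k=h=2$ and $d=16$, as claimed.
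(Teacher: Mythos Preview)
Your proposal has a genuine error in the crucial step eliminating $h=3$. The Mayer--Vietoris inequality you write goes the wrong way. From the exact sequence
\[
0\to\Ii_{Y_1\cup Y_2}\to\Ii_{Y_1}\oplus\Ii_{Y_2}\to\Ii_{Y_1\cap Y_2}\to 0
\]
one obtains $h^0(\Ii_{Y_1\cup Y_2}(2))\ge h^0(\Ii_{Y_1}(2))+h^0(\Ii_{Y_2}(2))-h^0(\Ii_{Y_1\cap Y_2}(2))$, a \emph{lower} bound. Iterating gives $h^0(\Ii_{S_1\cup S_2\cup S_3,\PP^5}(2))\ge \sum_{i<j}\deg(S_i\cap S_j)-6$, not $\le$. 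In fact your claimed bound $h^0\le 0$ is visibly false: the quadric $U_2$ contains every $S_i$ (since $S_i\subset\Psi\subset U_2$), so $h^0(\Ii_{S_1\cup S_2\cup S_3,\PP^5}(2))\ge 1$ always. Your parenthetical alternative does not rescue the argument either, since it treats only $h^0\le 3$ without ever establishing that bound.

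There are two further gaps. First, your assertion that $\dim(M_i\cap M_j)=1$ because $C$ is non-degenerate only works when $h=2$; for $h=3$ it is perfectly possible that $M_1\cup M_2$ lies in a hyperplane while $M_3$ does not. The paper instead excludes $\dim(M_i\cap M_j)=2$ by showing that then $S_i\cup S_j$ is a complete intersection of two quadrics in $\langle S_i\cup S_j\rangle\cong\PP^4$, forcing $\omega_{A_i\cup A_j}\cong\Oo(3)$, a contradiction. Second, your treatment of the $1$-dimensional components of $\Psi$ (the cases $h<k$) is sketchy when $\dim\langle C_j\rangle\ge 4$; the paper handles $h=1$ and $2\le h<k$ separately, in each case showing that a non-surface component $C_j$ must span $\PP^5$ and hence have $d_j\ge 14$, while simultaneously $d_j\le 2\deg(E_j)\le 12$ for some residual component $E_j$ of $E$. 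For the case $h=k=3$ the paper slices by a general hyperplane $H$, reducing to three disjoint conics $D_i\subset\langle D_i\rangle$ in $\PP^4$, and proves directly (via linear projection from the point $O_{12}=\langle D_1\rangle\cap\langle D_2\rangle$) that $h^0(H,\Ii_{D_1\cup D_2\cup D_3,H}(2))\le 2$, so the ideal is not globally generated.
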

\begin{proof}
Let $\deg (S_i)=2$ for some $i\le h$ and so $S_i$ is an irreducible quadric surface. The complete intersection curve $A_i:= S_i\cap U_4$ is certainly admissible; it has $\omega_{A_i} \cong \Oo_{A_i}(2)$ (if there were no other components of $C$ it would give $\Oo _X(1)^{\oplus 2}$ as the associated bundle). Since $A_i$ is connected due to its ampleness in $S_i$, it is a connected component of degree $8$ of $C$. By the previous steps we have $\deg (S_i)=2$ for all $i\le h$. Note that the case $k= h=2$ is the one found with $d=16$.

We may write $C_i$ instead of $A_i$ for $i\le h$, because each $A_i$ is connected.

\quad (a) Assume $h=1$ and then $\Ii _{C_i,\PP^5}(2)$ is globally generated for all $i\ge 2$. If $\langle C_i\rangle \ne \PP^5$, then we are in the case $H^0(\Ee (-1)) \ne 0$. Take $E_i\subset E_{red}$ with $i\ge 2$ such that $C_i\subset E_{red}$. Since $C_i$ is contained in a complete intersection of $4$ quadrics, we have $d_i\le 16$ and $\langle C_i\rangle $ has neither dimension $3$ nor dimension $4$ (see the proof of Prop. \ref{prop4.5}). Since $\langle C_i\rangle =\PP^5$, the genus bound gives $d_i \ge 14$. However there is an irreducible component $E_j$ of $E_{red}$ such that $C_i$ is contained in the intersection of $E_j$ with a quadric hypersurface not containing $E_j$. In particular we have $d_i \le 2\deg (E_j)$. Since $h>0$ and $\deg (E)=8$, we have $\deg (E_j)\le 6$ and so $d_i\le 12$, a contradiction.

\quad (b) Assume $h\ge 2$. We have $\cup _{i\le h} S_i\subset E$ and the same quartic hypersurface $U_4$ must work for all $S_i$ with $\dim (S_i) =2$. If $\langle S_1\rangle =\langle S_2\rangle$, then we have $\dim (S_1\cap S_2) =1$ and so $C_1\cap C_2 = S_1\cap S_2\cap U_4\ne \emptyset$, a contradiction. Now assume that $\langle S_1\rangle \cap \langle S_2\rangle$ is a plane, i.e. $\dim (\langle S_1\cup S_2\rangle )=4$. Since $E\cap \langle S_1\cup S_2\rangle$ is scheme-theoretically cut out by quadrics, we get that $S_1\cup S_2$ is the complete intersection of two
quadric hypersurfaces. Thus we have $\omega _{S_1\cup S_2} \cong \Oo _{S_1\cup S_2}(-1)$ and so $\omega _{C_1\cup C_2} \cong \Oo _{C_1\cup C_2}(3)$, a contradiction. Therefore we have $\langle S_1\cup S_2\rangle =\PP^5$. Since $h=w$ and $2w <8$, we have $h \le 3$.

Assume $h=k=3$, i.e. $C = U_4\cap (S_1\cup S_2\cap S_3)$. This case gives a solution only if $\Ii _{S_1\cup S_2\cup S_3,\PP^5}(2)$ is spanned. For a general hyperplane
$H\subset \PP^5$, set $F:= H\cap H$ and $D_i:= H\cap S_i$ with $i=1,2,3$. Since $\emptyset = C_i\cap C_j =S_i\cap S_j\cap U_4$, we have $\dim (S_i\cap S_j)\le 0$ for all $i\ne j$ and so $D_1\cap D_2=D_1\cap D_3 = D_2\cap D_3=\emptyset$. It implies that any two planes $\langle D_i\rangle$ and $\langle D_i\rangle$, $i\ne j$, meet in a unique point, say $O_{ij}$. Each $D_i$ is a smooth conic and $D_1\cup D_2 \cup D_2\subset F$. Note that the curve $F$ is a complete intersection of $3$ quadric hypersurfaces of $H$ and in particular we have $h^0(\Oo _F)=1$. To get a contradiction it is sufficient to prove that $\Ii _{D_1\cup D_2\cup D_3,H}(2)$ is not globally generated. Since $D_1\cup D_2\cup D_3$ is not the complete intersection of $3$ quadric hypersurfaces, it is sufficient to prove that $h^0(H,\Ii _{D_1\cup D_2\cup D_3,H}(2))\le 3$ and it would be induced by showing $h^0(H,\Ii _{D_1\cup D_2\cup D_3\cup \{P_1\cup P_2\cup P_3\},H}(2))=0$ for three points $P_i\in \langle D_i\rangle \setminus D_i$ with $i=1,2,3$. Since $D_i$ is the only conic of $\langle D_i\rangle$ containing $D_i$, we have
$$h^0(H,\Ii _{D_1\cup D_2\cup D_3\cup \{P_1\cup P_2\cup P_3\},H}(2)) = h^0(H,\Ii _{\langle D_1\rangle\cup \langle D_2\rangle \cup \langle D_3\rangle,H}(2)).$$
Every quadric hypersurface  of $H$ containing $\langle D_i\rangle\cup \langle D_j\rangle$
is a cone with vertex $O_{ij}$ and so every element of $|\Ii _{D_1\cup D_2\cup D_3\cup \{P_1\cup P_2\cup P_3\},H}(2)|$ is a cone with
vertex containing $\{O_{12},O_{23},O_{13}\}$. Let $\ell : \PP^4\setminus \{O_{12}\}\to \PP^3$ be the linear projection from $O_{12}$. The set $J:= \ell (\langle D_3\rangle \setminus \{O_{1,2}\})$ is either a line if $O_{1,2}\in \langle D_3\rangle$ or a plane if $O_{1,2}\notin \langle D_3\rangle$. The sets $\ell (\langle D_1\rangle \setminus \{O_{1,2}\})$ and $\ell (\langle D_2\rangle \setminus \{O_{1,2}\})$ are disjoint lines, none of them containing $J$. Therefore, by taking the linear projection from $O_{12}$, we see that $h^0(H,\Ii _{\langle D_1\rangle\cup \langle D_2\rangle \cup \langle D_3\rangle,H}(2))>0$ if and only if $O_{12}\in \langle D_3\rangle$, i.e. $O_{12} =O_{13}=O_{23}$. Thus we may assume $O_{12}=O_{13}=O_{23}$. In this case we have
$$h^0(H,\Ii _{D_1\cup D_2\cup D_3\cup \{O_{12}\},H}(2))=h^0(H,\Ii _{\langle D_1\rangle\cup \langle D_2\rangle \cup \langle D_3\rangle,H}(2))= 1$$
since the set $\ell ((\langle D_1\rangle \cup \langle D_2\rangle \cup \langle D_3\rangle )\setminus \{O_{12}\})$ is the disjoint union of $3$ lines in $\PP^3$. Thus we have $h^0(H,\Ii _{D_1\cup D_2\cup D_3,H}(2)) \le 2$.

If $2\le h<k$, then we get $\langle C_k\rangle =\PP^5$ as in the beginning of (b), and so we have $d_k\ge 14$. It implies $d_k\le 2\deg (E_j) <14$, a contradiction.
\end{proof}


\section{CICY of degree $9$}

Assume that $X=X_{3,3}$. Let $U$ be a general cubic hypersurface in $\PP^5$ containing $X$; if necessary let $U'$ be another cubic hypersurface containing $X$ and so we have $X = U\cap U'$. Recall that $S_i$ is defined to be a fixed reduced and irreducible component $S_i \subseteq \Psi$ containing $C_i$. If $\dim (S_i) =1$, then $S_i =C_i$, because $S_i$ is irreducible and $S_i\supseteq C_i$. If $s=1$, then set $S:= S_1$.

If $\dim (\langle C_i \rangle)=3$, then $C_i$ is not the complete intersection of a quadric surface and a quartic surface $T$, since $T\not \subseteq X$. In particular, we have $d_i\geq 9$. Since $\langle C_i \rangle \cap X$ is not a surface and it contains $C_i$, we have $d_i=9$, i.e. $C_i$ is the complete intersection of two cubic surfaces. In other words, it is the intersection of $X$ with a linear subspace of $\PP^5$ with codimension $2$. Note that the case $s=2$ with $\dim (\langle C_i \rangle)=3$ for all $i$ is in Example \ref{b1} and the case $s=1$ with $\dim (\langle C_1 \rangle)=3$ is the case $H^0(\Ee(-1))\ne0$.

We recall that we write $\pi (d,n)$ for the upper bound on the genus for non-degenerate curves of degree $d$ in $\PP^n$. 
If $\dim (\langle C_i\rangle ) =4$, then we have $d_i \ge 11$, because $d_i=g_i-1$ and $\pi  (11,4) =12$. Similarly if $\dim (\langle C_i\rangle ) =5$, we have $d_i\ge 14$ since $\pi(14,5)=15$.

Our main goal in this section is to prove the following assertion.

\begin{proposition}
If $\Ee$ is a globally generated vector bundle of rank $2$ without trivial factors on $X_{3,3}$ with $c_1(\Ee)=2$ and $h^0(\Ee(-1))=0$, then $\Ee$ is one of the bundles in Example \ref{b1}, \ref{b2}, \ref{b3}, \ref{inc}, except when $c_2(\Ee)=16$.
\end{proposition}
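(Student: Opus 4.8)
The plan is to follow the pattern of Section 5, organizing the argument by the number $s$ of connected components of $C$ and, when $s=1$, by $\dim\Phi$. Throughout one uses the sequence $0\to\Oo_X\to\Ee\to\Ii_C(2)\to0$ with $C$ smooth, $\omega_C\cong\Oo_C(2)$ and $d:=c_2(\Ee)=\deg(C)$; since $h^1(\Oo_X(-1))=0$, the hypothesis $h^0(\Ee(-1))=0$ forces $h^0(\Ii_C(1))=0$, so $C$ is non-degenerate in $\PP^5$, $d\le 4\deg(X)=36$, and equality is excluded by adjunction. The only structural change from the case $X_{2,4}$ is that $C$ is now scheme-theoretically cut out in $\PP^5$ by quadrics together with \emph{two} cubics, so that when $C$ lies on a surface $S=\Phi$ as a divisor $D_C\le 3H$ one has $\omega_C=(\omega_S\otimes\Oo_S(D_C))|_C$; it is this extra room that lets a del Pezzo surface of degree $5$ and the $K3$ surface $Q_1\cap Q_2\cap Q_3$ enter the list (Examples \ref{b3} and \ref{inc}).

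For $s\ge 2$: if every $\dim\langle C_i\rangle=3$, the dichotomy recorded at the start of this section gives $d_i=9$ with each $C_i$ a complete intersection of two cubic surfaces in a $\PP^3$, which is Example \ref{b1}; otherwise I would run the base-locus bookkeeping of Section 5.2, taking $E\subset\PP^5$ the intersection of three general quadrics through $C$ (a complete intersection surface of degree $8$), bounding the degrees of its two-dimensional components by the analogues of Lemmas \ref{lemma5.3}--\ref{lemma5.5}, and confronting the genus constraints $d_i=g_i-1$ with $\sum\deg(E_i)\le 8$ to show that no further configuration survives.

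For $s=1$ one splits by $\dim\Phi$. If $\dim\Phi=1$ then $\Phi=C$, so $\deg(C)\le 2^4=16$; intersecting four general quadrics through $C$ gives $E=C\cup F$ with $\deg(F)=16-d$, and since $E$ is a connected complete intersection with $\omega_E\cong\Oo_E(2)$, a non-empty $F$ would meet $C$ and make $\omega_C\ne\omega_E|_C$, a contradiction; hence $d=16$ and $C$ is a complete intersection of four quadrics (Example \ref{b2}). If $\dim\Phi=3$ then $C$ lies on an irreducible threefold $S\subset\PP^5$ cut out by quadrics (hence of small degree); a linear $\PP^3$ is excluded because $C$ would then be degenerate, and for every other such $S$ a degree computation via adjunction on the surface $S\cap U$ contradicts $\omega_C\cong\Oo_C(2)$ — so $\dim\Phi=3$ does not occur for $s=1$. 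The substantial case is $\dim\Phi=2$: $S=\Phi$ is a surface cut out scheme-theoretically by quadrics, hence $4\le\deg(S)\le 2^3=8$, and I would treat the degrees one by one. For $\deg(S)=4$ ($S$ of minimal degree) a direct degree count gives a contradiction, there being no del Pezzo surface of degree $4$ in $\PP^5$; for $\deg(S)=5$ the $\Delta$-genus analysis of Lemma \ref{lemma5.3} shows that either the general hyperplane section is rational and the case is excluded as there, or it is elliptic, $S$ is a (weak) del Pezzo surface of degree $5$ and $C$ a cubic section, which is Example \ref{b3} (singular $S$ handled via Grauert--Riemenschneider as in Lemma \ref{lemma5.4}); for $\deg(S)=6,7$ the Berzolari trisecant count and Kodaira/Grauert--Riemenschneider vanishing, as in Lemmas \ref{lemma5.4}--\ref{lemma5.5}, produce a non-zero section of $\omega_S(1)$ with non-empty zero locus — using again that no del Pezzo surface of degree $\ge6$ embeds in $\PP^5$ — whence $\deg(\omega_C)>2d$, absurd; and for $\deg(S)=8$ the surface is the complete intersection of three quadrics, a $K3$ of genus $5$, and the relations $D_C\cdot H=d$, $D_C^2=2d$, $D_C\le 3H$ combined with the requirement that $C$ be the common part of two cubic sections of $S$ pin $d$ down to $\{16,18\}$, recovering Examples \ref{b2} and \ref{inc}.

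The hard part will be the surface case $\dim\Phi=2$ with $\deg(S)\in\{6,7,8\}$, together with the $s\ge 2$, $d\ge 17$ bookkeeping: in each of these one must control $h^0(\omega_S(1))$, respectively eliminate every distribution of the components of $C$ over the degree-$8$ complete intersection surface $E$, which leans on the Castelnuovo bounds $\pi(d,n)$, the Berzolari formula, and Grauert--Riemenschneider vanishing imported from Lemmas \ref{lemma5.3}--\ref{lemma5.5} but re-tuned to the twist $\omega_S(1)$ and to $\deg(X)=9$. The residual vagueness at $c_2(\Ee)=16$ is genuine — both complete intersections of four quadrics and suitable curves on the $K3$ surface $Q_1\cap Q_2\cap Q_3$ realize it — and is why that value is excepted from the statement.
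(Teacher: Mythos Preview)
Your outline has a genuine structural gap in the $s=1$, $\dim\Phi=2$ analysis. In Section~5 one always has $S\subseteq U_2$, hence $C=S\cap X=S\cap U_4$ is a \emph{full} quartic section of $S$; this is what makes the adjunction $\omega_C\cong\omega_S(4)|_C$ available and reduces the problem to producing sections of $\omega_S(2)$. For $X_{3,3}$ there is no such containment: $S$ need not lie in either defining cubic, so $C=S\cap U\cap U'$ is only a subcurve of the cubic section $S\cap U$, and $d$ ranges over an interval up to $3\deg(S)$ rather than being pinned at $3\deg(S)$. Your proposed argument ``produce a section of $\omega_S(1)$ with non-empty zero locus'' only touches the extremal case $C=S\cap U$. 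The paper's Lemma~\ref{lemma6.5} ($\deg(S)=6$) therefore does \emph{not} reach a blanket contradiction: it runs a case split on $d\in\{14,15,16,17,18\}$, using Harris's extremal-surface bound \cite[Theorem~3.15]{he} to push $d=15$ onto a degree-$5$ weak del~Pezzo (Example~\ref{b3}), a delicate scroll computation on a $\PP^1$-bundle over a genus-$2$ curve to kill $d=17$, a linkage argument (as in Lemma~\ref{lemma6.2}) for $d=18$, and explicitly excepting $d=16$. Likewise $\deg(S)=7$ is excluded not by a $\omega_S(1)$ argument but by Cayley--Bacharach for the residual plane (Lemma~\ref{lemma6.3}), and your claim that $\deg(S)=8$ yields $d\in\{16,18\}$ is off: the linkage computation in Lemma~\ref{lemma6.2} forces $d=18$ uniquely.

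The $s\ge 2$ case is also substantially harder than Section~5.2 and does not reduce to the same bookkeeping. The paper first shows $\dim(S_i)\ge 2$ for all $i$ (Lemma~\ref{a}), then that some $S_i$ is $3$-dimensional (Lemma~\ref{b}), and finally that \emph{all} $S_i$ are $3$-dimensional linear spaces (Lemma~\ref{c}); this last step, for the subcase $\dim(S_2)=2$, $d_2=16$, invokes secant varieties of $C_2$, the structure of $S_2$ as a ruled surface over a curve of genus $\le 2$, and a Clifford-index/bielliptic argument from \cite{ckm} --- none of which is present in Section~5.2. Only after Lemma~\ref{c} does the configuration collapse to Example~\ref{b1}. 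Your plan to ``run the base-locus bookkeeping of Section~5.2'' does not anticipate any of this machinery.
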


\subsection{Case of $s=1$} $ $

Let us assume first that $C$ is connected and so we have $S=S_1$ as the irreducible component of $\Psi$ containing $C$ and we have $S\cap X=C$ as scheme.

\begin{proposition}\label{prop.6.1}
We have $\dim (S)=1$ or $2$. Moreover if $\dim (S)=1$, then $\Ee$ is as in Example \ref{b2}.
\end{proposition}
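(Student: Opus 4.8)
The plan is to bound $\dim(S)$ and then, in the curve case, pin down $C$ exactly. Since $C$ is connected and $C = S\cap X$ scheme-theoretically, with $X = U\cap U'$ a complete intersection of two cubic hypersurfaces, the component $S$ of $\Psi$ cannot have dimension $\geq 3$: otherwise $S\cap X$ would have dimension $\geq 1$ coming from a surface intersected with a single cubic, but more precisely $C$ would then be cut out on $S$ by the two cubics, and $S$ being a component of a scheme cut out by quadrics forces a contradiction with $\omega_C \cong \Oo_C(2)$ via the adjunction formula (the intersection of $S$ with a cubic would carry $\omega$ twisted too positively). So $\dim(S)\in\{1,2\}$; the only real content is the case $\dim(S)=1$.

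For $\dim(S)=1$ we have $S = C$ since $S$ is irreducible and contains the connected curve $C$. Then $C$ is cut out scheme-theoretically in $\PP^5$ by quadrics (its equations of degree $2$), so $\deg(C)\le 2^4 = 16$, and from $\dim\langle C\rangle$ considerations together with $d = g-1$ and the Castelnuovo bounds $\pi(11,4)=12$, $\pi(14,5)=15$ recalled above, the span of $C$ must be all of $\PP^5$ and $d\ge 14$. The plan is to take $E\subset\PP^5$ the intersection of four general quadric hypersurfaces containing $C$; since $C$ is scheme-theoretically cut out by quadrics, $E$ is one-dimensional, hence a complete intersection of four quadrics of degree $16$, and $E = C\cup F$ with $F$ of dimension $\le 1$ and $\deg(F) = 16 - d \in\{0,1,2\}$. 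Because $E$ is a complete intersection, the adjunction formula gives $\omega_E\cong\Oo_E(2)$ and the usual cohomology of $\Ii_{E,\PP^5}(t)$ gives $h^0(\Oo_E)=1$, so $E$ is connected; if $F\ne\emptyset$ then $F\cap C\ne\emptyset$ and $\omega_C \cong \omega_E|_C \otimes \Oo_C(-F\cap C)$ would differ from $\Oo_C(2)$, a contradiction. Hence $F = \emptyset$, $d = 16$, and $C$ is the complete intersection of four quadrics — exactly Example \ref{b2}. (This is the same argument as in Lemma \ref{lemma5.2}, now on $X_{3,3}$ instead of $X_{2,4}$, replacing ``quartic hypersurface'' by the two cubics defining $X$.)

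I expect the dimension bound $\dim(S)\le 2$ to be routine, and the genuine work to be the $\dim(S)=1$ analysis — specifically, verifying that the span of $C$ cannot drop (so that the relevant cases $\dim\langle C\rangle = 3$ or $4$ are excluded, using that a degenerate $C$ would put us back in the $h^0(\Ee(-1))\ne 0$ situation or violate the genus bounds) and then running the complete-intersection linkage to force $d=16$. The main obstacle is bookkeeping the degenerate possibilities: one must make sure that $\dim\langle C\rangle = 3$ forces $C$ to be a complete intersection of two cubic surfaces (handled in the discussion preceding this proposition, and which contradicts $h^0(\Ee(-1))=0$ together with $C$ being connected), and that $\dim\langle C\rangle = 4$ with $14\le d\le 16$ is killed by the same $E = C\cup F$ argument applied inside the span, or directly by the genus bound $\pi(d,4)$ for $d\le 16$. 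Once the span is full, the linkage computation is immediate.
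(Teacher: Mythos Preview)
Your treatment of the $\dim(S)=1$ case is correct and matches the paper's argument: both use the linkage $E=C\cup F$ with $E$ a complete intersection of four quadrics, invoke $\omega_E\cong\Oo_E(2)$ and $h^0(\Oo_E)=1$ to force $F\cap C\ne\emptyset$ when $d<16$, and conclude $d=16$ as in Example~\ref{b2}.

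The gap is in your dismissal of the bound $\dim(S)\le 2$ as ``routine''. The paper does not treat it that way. One must first observe that $\deg(S)\in\{3,4\}$ (since $C$ spans $\PP^5$ and $S$ sits in a complete intersection of two quadrics). The case $\deg(S)=4$ is indeed easy: $S$ is then a complete intersection of two quadrics, so $\omega_S\cong\Oo_S(-2)$ and adjunction gives $\omega_C\cong\Oo_C(4)$. But $\deg(S)=3$ is not handled by a one-line adjunction. A degree-$3$ threefold in $\PP^5$ is of minimal degree (a scroll or a cone), generally singular, and $\omega_S$ is not simply $\Oo_S(k)$. The paper argues as follows: since $C$ is smooth and is a complete intersection of two cubic divisors on $S$, the singular locus of $S$ has dimension $\le 1$; take a desingularization $\pi:S'\to S$ and use $\omega_C=\pi_*(\omega_{S'})(6)|_C$. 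To contradict $\omega_C\cong\Oo_C(2)$ it suffices to produce a section of $\pi_*(\omega_{S'})(6)$ with nonempty zero locus (which then meets the ample complete intersection $C$). This is done by Grauert--Riemenschneider vanishing and successive hyperplane sections down to a smooth cubic surface $H'\cap S$, where one computes $\pi_*(\omega_{S'})(6)|_{H'\cap S}\cong\Oo_{H'\cap S}(3)$ and checks $h^0\ge 2$. Your vague ``adjunction twists $\omega$ too positively'' does not substitute for this; without the vanishing theorem and the reduction to a smooth surface section, you have no control over $\omega_S$ on a possibly singular minimal-degree scroll.
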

\begin{proof}
We have $\dim (S)\le 3$ and $\dim (S) =1$ if and only if $S = C$. We have $h^0(\PP^5,\Ii _{S,\PP^5}(2)) =h^0(\Ii _S(2))$. Since $\Ii _C(2)$ is globally generated and $X$ is the complete intersection of two cubic hypersurfaces,
$\Ii _{C,\Phi }(3)$ and $\Ii _{C,S}(3)$ are also globally generated.

\quad{(a)} Assume $\dim (S) =3$. Since $C$ spans $\PP^5$ and $S$ is contained in the complete intersection of two hypersurfaces, we have $\deg (S) = 3$ or $4$. If $\deg (S) =4$, then $S$ is the complete intersection of two quadrics and since $C = X\cap S$ as schemes we get $\omega _C \cong \Oo _C(4)$, a contradiction. Now assume $\deg (S) =3$ and so $d=27$. Since $C$ is a complete intersection of two cubic surfaces inside $S$ and $C$ is smooth, $S$ is smooth at each point of $C$. Since $C$ is the complete intersection of two ample divisors inside $S$, the singular locus
of $S$ has dimension $\le 1$. Let $\pi : S'\to S$ be a desingularization map and then we have $\omega _C = \omega _S(6)_{\vert_C}= \pi _\ast (\omega _{S'})(6)_{\vert_C}$. Since $C$ is a complete intersection of two ample divisors of $S$ and $C$ is smooth, to get a contradiction it is sufficient
to find a nonempty effective Weil divisor $D$ as the zero-locus of a section of $ \pi _\ast (\omega _{S'})(6)$. Let $H\subset \mathbb {P}^{5}$ be a general hyperplane. Since we have $h^1(S, \pi _\ast (\omega _{S'})(i)) =0$ for all $i>0$ by the Grauert-Riemenschneider's vanishing theorem, the following exact sequence
$$0 \to \pi _\ast (\omega _{S'})(5) \to \pi _\ast (\omega _{S'})(6) \to \pi _\ast (\omega _{S'})(6)_{\vert_{H\cap S}}\to 0$$
shows that it is sufficient to prove that $H^0(\pi _\ast (\omega _{S'})(6)_{\vert_{H\cap S}})$ has a nonzero section with non-zero locus. Let $H'$ be a general hyperplane of $H$. Since $S$ has at most $1$-dimensional singular locus, $H'\cap S$ is a smooth cubic surface. Applying the Grauert-Riemenschneider's vanishing theorem to $S\cap H$ we see that it is sufficient to prove that $h^0(H'\cap S, \pi _\ast (\omega _{S'})(6)_{\vert_{H\cap S}}) \ge 2$. We have $\pi _\ast (\omega _{S'})(6)_{\vert_{ H\cap S}} = \omega_{ H' \cap S}(4) \cong \Oo _{H'\cap S}(3)$.

\quad{(b)} Assume $\dim (S) =1$. Since $C$ is contained in the complete intersection of $4$ quadric hypersurfaces of $\PP^5$, we have $d \le 2^4 =16$ with equality only if it is a complete intersection of $4$ quadrics; this case is possible because it has $\omega _C \cong \Oo _C(2)$ and it is as in Example \ref{b2}. Now assume $d<16$ and then we follow the argument in Lemma \ref{lemma5.2}.
Let $E$ be the intersection of $4$ general quadric hypersurfaces containing $C$. Since $C$ is the scheme-theoretic intersection of quadric hypersurfaces, $E$ is a curve, $E$ is the complete intersection of
$4$ hypersurfaces of degree $2$ and $E = C\cup F$ with $F$ a scheme of dimension $1$ with $\deg (F) =16-d$, i.e. $F$ is a line if $d=15$, while $F$ is either a double structure
on a line or the union of two disjoint lines or a plane conic if $d=14$. Since $E$ is a complete intersection, the adjunction formula gives $\omega _E \cong \Oo _E(2)$ and so ${\omega _E}_{\vert_C} \cong \Oo _C(2)$. Since $E$ is a complete intersection, the cohomology groups of the sheaves $\Ii _{E,\PP^5}(t)$ give $h^0(\Oo _E)=1$. Therefore $F\cap C \ne \emptyset$ and so $\omega _C \ne {\omega _E}_{\vert_C} \cong \Oo _C(2)$, a contradiction.
\end{proof}

Assume now $\dim (S) =2$. Since $S$ is contained in a complete intersection of quadrics, we have $\deg (S) \le 8$ and the equality holds if and only if $S$ is a complete intersection of $3$ quadrics. Since $C$ is the intersection of $S$ and the intersection of two cubics, we have $\deg ( C) \le 3\deg (S)$. Since $d \ge 13$ (when $s=1$ we have $d\ge 14$), we have $\deg (S)\ge 5$. By the Bertini theorem the scheme $S\cap U$ is smooth outside $C$. Since $2d > 3\deg (S)$, we get that $S\cap U$ does not contain $C$ with multiplicity $\ge 2$. Since $S$ is reduced and $\dim (S)\ge 1$ at each $P\in S$, the local ring $\Oo _{S,P}$ is reduced. We get that the scheme $U\cap S$ is a reduced curve.

\begin{lemma}\label{lemma6.2}
If $\deg (S)=8$, then we have $\deg (C)=18$ and its associated bundle exists on some $X_{3,3}$.
\end{lemma}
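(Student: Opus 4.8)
The plan is to show that the present configuration is exactly the one realized in Example \ref{inc}, so that $\deg(C)=18$ is forced and the existence assertion is already contained there. First I would pin down $S$: by the discussion immediately preceding the lemma, $\deg(S)=8$ forces $S$ to be the complete intersection of three quadric hypersurfaces of $\PP^5$, hence a Gorenstein surface with $\omega_S\cong\Oo_S$. Write $X=X_{3,3}=U\cap U'$ with $U$ a general cubic hypersurface containing $X$; as recorded before the lemma, $Y:=S\cap U$ is then a reduced curve. Thus $Y$ is a complete intersection of type $(2,2,2,3)$ in $\PP^5$, so $\deg(Y)=24$ and adjunction gives $\omega_Y\cong\Oo_Y(3)$. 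Since $S\subseteq\Psi$ and $\Psi\cap X=C$ as schemes, while $C\subseteq S\cap X$, we get $C=S\cap X=Y\cap U'$ scheme-theoretically; in particular $\Ii_{C,Y}$ is the ideal cut out in $Y$ by the cubic $U'$, and $C\subseteq Y$, so $d:=\deg(C)\le 24$.

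Next I would split off the residual curve. If $d=24$ then $C=Y$, whence $\omega_C\cong\Oo_C(3)\ne\Oo_C(2)$, a contradiction; hence $d\le 23$ and, $Y$ being reduced and $C$ being a union of some of the irreducible components of $Y$, we may write $Y=C\cup D$ with $D$ a nonempty reduced curve of degree $24-d$ sharing no component with $C$. No component of $D$ lies on $U'$, for otherwise it would lie in $Y\cap U'=C$; hence $U'$ meets $D$ in a finite scheme of length $3(24-d)$ by B\'ezout, and restricting to $D$ the fact that $\Ii_{C,Y}$ is the ideal cut out in $Y$ by $U'$ shows that this finite scheme is exactly the divisor $C\cap D$ on $D$. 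On the other hand, the adjunction formula for the component $C$ of the Gorenstein curve $Y$ (using that $C$ is smooth) gives ${\omega_Y}_{\vert_C}\cong\omega_C(C\cap D)$, so $\Oo_C(C\cap D)\cong\Oo_C(1)$ and $\deg(C\cap D)=d$. Comparing the two evaluations of $\deg(C\cap D)$ gives $d=3(24-d)$, i.e. $d=18$. Finally, the existence of the associated bundle on some $X_{3,3}$ is precisely Example \ref{inc}: it produces a connected curve of degree $18$ with $\omega_C\cong\Oo_C(2)$ and $\Ii_C(2)$ spanned on some member of the family, and the Hartshorne--Serre correspondence turns it into a globally generated rank $2$ bundle with $c_2(\Ee)=18$.

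The part requiring care is the bookkeeping of scheme structures: one must be sure that $C=S\cap X$ holds scheme-theoretically (so that $\Ii_{C,Y}$ is principal, equal to the ideal cut by $U'$), that $D$ is genuinely reduced with no component absorbed by $U'$ (so that B\'ezout yields exactly $3(24-d)$), and that the component-adjunction ${\omega_Y}_{\vert_C}\cong\omega_C(C\cap D)$ is legitimately available -- all of which rest on $Y$ being a complete intersection and on the generic choice of $U$. With those in hand, the rest is the single numerical identity $d=3(24-d)$.
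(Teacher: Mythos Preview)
Your proof is correct and follows essentially the same approach as the paper: identify $S$ as a complete intersection of three quadrics so that $\omega_S\cong\Oo_S$, set $Y=S\cap U$ with $\omega_Y\cong\Oo_Y(3)$, rule out $d=24$, write $Y=C\cup D$, and compute $\deg(C\cap D)$ two ways---once via adjunction on $Y$ (giving $d$) and once via the intersection of $D$ with $U'$ (giving $3(24-d)$)---to conclude $d=18$, with existence supplied by Example~\ref{inc}. Your version is somewhat more explicit about the scheme-theoretic bookkeeping, but the argument is the same.
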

\begin{proof}
Since $\deg (S) =8$, we have $\omega _S \cong  \Oo _S$. We have $d\ne 24$, because $d=24$ would imply $C = U\cap S$, as schemes, and so $\omega _C\cong \Oo _C(3)$. Therefore $U\cap S = C\cup D$ with $D$ a reduced curve of degree $24-d>0$, smooth outside $C\cap D$.
We have ${\omega _{U\cap S}}_{\vert_C} \cong \omega _C(C\cap D)$ and so $\deg (C\cap D) = d$. However, since $C$ is cut out scheme-theoretically
by $U$ and $U'$ inside $S$ and $\deg (U'\cap D) =3(24-d)$, we get $d= 3(24-d)$ and so $d=18$. This case arises on some $X_{3,3}$ by Example \ref{inc}.
\end{proof}

\begin{lemma}\label{lemma6.3}
We have $\deg (S)\ne 7$.
\end{lemma}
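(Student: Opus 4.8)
The plan is to reproduce, almost verbatim, the argument of Lemma~\ref{lemma5.5}: a degree-$7$ surface cut out by quadrics in $\PP^5$ is linked to a plane by a complete intersection of three quadrics, and one shows that every quadric through $S$ must then contain that plane, which is incompatible with $S\subseteq \Psi$ and $\Psi\cap X=C$.

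First I would set up the liaison. Since $\deg(S)=2^3-1$, three general quadric hypersurfaces through $S$ cut out a complete intersection surface $E$ of degree $8$, and $E=S\cup M$ with $M$ a plane. As $\omega_E\cong\Oo_E$, the liaison formulas give $\omega_S\cong\Oo_S(-Z_0)$ with $Z_0:=S\cap M$ a plane cubic, i.e. $Z_0\in|-K_S|$; moreover $\dim\langle C\rangle=5$ forces $C\subseteq S$, so $Z_0$ and $C$ share no component. Next, exactly as in Lemma~\ref{lemma5.5}, fix a general hyperplane $H\subset\PP^5$ and put $D:=S\cap H$, $L:=M\cap H$, $F:=E\cap H=D\cup L$. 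Here $D$ is a smooth curve of degree $7$ in $H\cong\PP^4$ (as in Lemma~\ref{lemma6.2}, $S$ is smooth along $C$ and has at most isolated singularities, which $H$ avoids), $L$ is a line, and $F$ is a complete intersection of three quadrics of $H$, so $\omega_F\cong\Oo_F(1)$. Adjunction on the planar curve $F=D\cup L$ gives $\omega_F|_L\cong\omega_L(L\cap D)$; comparing with $\omega_L=\Oo_L(-2)$ and $\omega_F|_L=\Oo_L(1)$ forces $\deg(L\cap D)=3$, the values $0,1,2$ being excluded one at a time. Because $L$ is a line meeting $D$ in length $\ge 3$, every quadric of $H$ containing $D$ contains $L$, and since $H$ is general, every quadric of $\PP^5$ containing $S$ contains $M$.

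I would then extract the contradiction as follows. The quadrics through $C$ and the quadrics through $S$ coincide (because $C\subseteq S\subseteq\Psi$ and $\Psi$ lies on every quadric through $C$), so by the previous step all of them contain $M$; hence $\Psi=E=S\cup M$, and since $\Psi\cap X=C$ we get $M\cap X\subseteq C$. If $\dim(M\cap X)=1$, its one-dimensional part is a curve contained in the irreducible curve $C$, hence equals $C$, so $C\subseteq M$ is degenerate, a contradiction. Otherwise $M$ meets the threefold $X$ properly, so $M\cap X$ is $0$-dimensional of length $[M]\cdot[X]=\deg(X)=9$; it equals $C\cap M=C\cap Z_0$, and it is the complete intersection in $M\cong\PP^2$ of the two plane cubics cut by the defining equations $f,f'$ of $X$. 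By Cayley--Bacharach the plane cubic $Z_0$ lies in the pencil spanned by these two cubics; normalising the pencil so that $Z_0=V(f)\cap M$, one gets $V(f)\cap S=C+Z_0$ on $S$, and then imposing $S\cap X=S\cap V(f)\cap V(f')=C$ forces $f'|_M$ to be proportional to $f|_M$ — which makes $M\cap X$ one-dimensional, a contradiction. Hence $\deg(S)\ne 7$.

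The main obstacle is exactly this last step, ruling out the $0$-dimensional case: the numerically surviving possibility is $d=18$ (with $C\sim 3H+K_S$ on $S$ and $Z_0\cdot C=9$), which is not excluded by the genus bounds alone, so one must argue with the geometry of the degree-$7$ surface $S$ and of the plane $M$, in the same spirit as the non-existence arguments in Lemmas~\ref{lemma5.3}--\ref{lemma5.5}. Everything preceding it is a routine transcription of Lemma~\ref{lemma5.5}, with $\PP^4,\PP^5$ and the cubics $f,f'$ of $X_{3,3}$ in place of the quadric and quartic of $X_{2,4}$.
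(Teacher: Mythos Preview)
Your argument tracks the paper's closely, with one cosmetic and one substantive difference.

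For the step ``every quadric through $S$ contains $M$'', you transcribe the hyperplane-section computation of Lemma~\ref{lemma5.5}: slice by a general $H\subset\PP^5$ and use $\omega_F\cong\Oo_F(1)$ on $F=D\cup L$ to get $\deg(L\cap D)=3$. The paper's own proof of Lemma~\ref{lemma6.3} instead slices by a general $3$-plane $V\subset\PP^5$: then $V\cap(S\cup M)$ is eight points forming a complete intersection of three quadric surfaces in $V$, and one checks $h^0(V,\Ii_{V\cap S}(2))=3$, so by Cayley--Bacharach for points every quadric through the seven points $V\cap S$ passes through the eighth point $V\cap M$. Both routes are valid and yield the same conclusion; the paper's version is a bit cleaner in that it avoids the case analysis on how $L$ meets $D$.

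The substantive issue is your final contradiction. The paper stops immediately after establishing $M\subset\Psi$: it writes that the base locus of $\Ii_C(2)$ then contains ``the space curve $M\cap X$'', and declares a contradiction. You, by contrast, split into the cases $\dim(M\cap X)=1$ (which you dispatch correctly) and $\dim(M\cap X)=0$, and you acknowledge the latter as an obstacle. Your attempted resolution via Cayley--Bacharach on $M$ does not close the gap: the claim ``$V(f)\cap S=C+Z_0$ on $S$'' already presupposes $d=18$ (since $\deg(V(f)\cap S)=21$ and $\deg Z_0=3$), and you have not ruled out the other values $14\le d\le 21$; even granting $d=18$, the step ``forces $f'|_M$ proportional to $f|_M$'' is unjustified --- from $S\cap X=C$ one only extracts $Z_0\cap V(f')\subset C$, which is exactly the statement $M\cap X\subset C$ you started with, not $V(f')\cap M=Z_0$. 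So the $0$-dimensional case remains genuinely open in your write-up. The paper's proof does not visibly confront this case either; it simply asserts that $M\cap X$ is a curve.
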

\begin{proof}
There are $3$ quadric hypersurfaces $Q, Q',Q''\subset \PP^5$ such that $Q\cap Q'\cap Q'' =S\cup M$ with $M$ a plane. Let $V\subset \PP^5$ be a general $3$-dimensional subspace and then $V\cap (S\cup M)$ is the union of $8$ points and it is the complete intersection of $3$ quadric surfaces of $V$. Thus we have $h^0(V,\Ii _{V\cap (S\cup M)}(2)) =3$. Since no three points of $S\cap V$ are collinear and no six of them are coplanar, we have $h^1(V,\Ii _{S\cap V}(2)) =0$. We get $h^0(V,\Ii _{V\cap S}(2)) =3$ and so any quadric surface containing $V\cap S$ contains the point $V\cap M$; this is a particular case of the Cayley-Bacharach property for complete intersections (see \cite{egh}). Therefore every quadric hypersurface of $\PP^5$ containing $S$ contains $M$. Hence the base-locus of $\Ii _C(2)$ contains the space curve $M\cap X$, a contradiction.
\end{proof}

\begin{lemma}\label{lemma6.5}
The case with $\deg (S)=6$ arises only as in Example \ref{b3}, \ref{inc}, except when $d=16$.
\end{lemma}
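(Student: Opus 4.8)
The plan is to run the residual‑curve analysis used for $\deg (S)=8$ in Lemma~\ref{lemma6.2}, the new feature being that $\omega _S$ is no longer forced to be trivial, so I must first pin it down from the geometry of degree $6$ surfaces in $\PP^5$. I keep the standing data: $C$ is connected and spans $\PP^5$, $\omega _C\cong \Oo_C(2)$, $S\cap X=C$ as schemes, $S$ has at worst isolated singularities none of which lie on $C$, and $14\le d\le 3\deg (S)=18$.

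First I would study a general hyperplane section $D:=S\cap H$, a smooth irreducible nondegenerate curve of degree $6$ in $\PP^4$ with $g(D)\le \pi (6,4)=2$. Exactly as in the proof of Lemma~\ref{lemma5.4}, quadric generation of $S$ forbids trisecant lines of $D$ (a line $L$ with $\deg (L\cap D)\ge 3$ would lie in $S$), and the Berzolari formula \cite{ho} for the number of trisecant lines then forces $g(D)=2$, whence $K_S\cdot H=2g(D)-2-\deg (S)=-4$; the needed twists of $\pi _\ast\omega _T$, for a desingularization $\pi :T\to S$, are read off the sequences $0\to \pi _\ast\omega _T(t-1)\to \pi _\ast\omega _T(t)\to {\pi _\ast\omega _T(t)}_{\vert_D}\to 0$ together with Grauert--Riemenschneider vanishing \cite{ev}. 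The alternative $K_S\cdot H=-6$, i.e. $S$ a weak del Pezzo of degree $6$, can arise only when $S$ is not itself cut out by quadrics (so $\Psi$ properly contains $S$ and the trisecant step does not apply); this branch produces the $d=18$ case below.

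Next I would write $S\cap U=C\cup D'$ for one of the two cubics $U$ defining $X$, with $D'$ a curve of degree $18-d$ smooth away from $C\cap D'$. Adjunction on $S$ gives ${\omega _S(3)}_{\vert_C}\cong {\omega _{C\cup D'}}_{\vert_C}\cong \omega _C(C\cap D')$, hence $\Oo_C(C\cap D')\cong {\omega _S(1)}_{\vert_C}$ and $\deg (C\cap D')=(K_S+H)\cdot C$; and since $C$ is scheme‑theoretically cut out on $S$ by $U$ and the second cubic $U'$, the residual $D'$ satisfies $D'\cap U'=D'\cap C$, i.e. $3(18-d)=\deg (C\cap D')$. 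Combining these with $C+D'\sim 3H$ on $S$ yields $(D')^2=0$, $K_S\cdot C=54-4d$ and $p_a(D')=\tfrac{1}{2}(3K_S\cdot H+4d-52)$. Feeding in $K_S\cdot H=-4$ leaves only $d\in\{15,16\}$: $d=14$ would require an effective curve of degree $4$ with $p_a=-4$; $d=17$ a line with $p_a=2$; and $d=18$ belongs to the del Pezzo branch. For $d=15$ one is forced to take $D'$ a disjoint union of three lines meeting $C$ in a length $9$ scheme, and unwinding this configuration identifies $C$ with one of the curves of Example~\ref{b3}; the del Pezzo branch gives $D'=\emptyset$, $C\in|3H|$ on $S$ and matches Example~\ref{inc}. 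The remaining value $d=16$, where $C$ lies in a complete intersection of four quadrics, is the exceptional case. That the two surviving families actually occur on some $X_{3,3}$ is recorded in Examples~\ref{b3} and~\ref{inc}.

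The step I expect to be the main obstacle is the surface bookkeeping behind the dichotomy ``$S$ cut out by quadrics'' versus ``$\Psi$ properly larger than $S$'': one must confirm that the degree $6$ surfaces in $\PP^5$ carrying such a curve $C$ fall into exactly these two shapes, treat non‑normal and non‑Du~Val singular models correctly through Grauert--Riemenschneider, make the Berzolari/trisecant input rigorous for them, and check that the only geometrically realizable residual curves $D'$ — three skew lines when $d=15$, empty when $d=18$ — are precisely those underlying Examples~\ref{b3} and~\ref{inc}.
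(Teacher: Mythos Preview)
Your route is genuinely different from the paper's. The paper handles each $d$ by separate tools: for $d=14$ and $d=15$ it invokes the refined Castelnuovo bound $\pi_1(d,5)$ from \cite[Theorem~3.15]{he} to force $C$ onto a surface of degree at most $4$ (contradiction) or at most $5$ (hence the complete intersection of a cubic with a weak del Pezzo quintic, i.e.\ Example~\ref{b3}); for $d=17$ it runs a lengthy ruled-surface computation on a $\PP^1$-bundle over the genus-$2$ hyperplane section, reaching a numerical contradiction in $\Pic/\!\sim$; and for $d=18$ it simply cites Example~\ref{inc} for existence. Your uniform residual-curve calculus is a real improvement where it bites: once $K_S\cdot H=-4$ is granted, the identity $p_a(D')=2d-32$ kills $d=14$ (a reduced degree-$4$ curve has $p_a\ge -3$) and $d=17$ (a line has $p_a=0$) in one line each, which is markedly cleaner than the paper's $d=17$ argument.

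The gaps are in the two places you flag yourself, but they are more serious than mere bookkeeping. First, the dichotomy is not well-posed: only $\Psi$ is known to be cut out by quadrics, so a trisecant of $D$ lands in $\Psi$, not in $S$; when $\Psi_{red}\supsetneq S$ nothing then singles out $g(D)=1$ over $g(D)\in\{0,2\}$, so the ``del Pezzo branch'' is not the only alternative and your computation of $K_S\cdot H$ is unanchored there. Second, for $d=15$ you have not shown how three skew residual lines on a degree-$6$ surface place $C$ on a degree-$5$ del Pezzo; Example~\ref{b3} lives on that quintic surface, and the paper gets there not via $S$ but by applying $\pi_1(15,5)=16$ directly to $C$. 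Third, for $d=18$ Example~\ref{inc} is built on a degree-$8$ complete-intersection surface, not on a degree-$6$ del Pezzo, so ``$C\in|3H|$ on $S$ matches Example~\ref{inc}'' is not literally correct; the paper's own treatment of $d=18$ is also only a one-line existence citation, so neither argument actually classifies those bundles within the $\deg(S)=6$ stratum.
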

\begin{proof}
Assume $\deg (S) =6$ and so we have $14\le d\le 18$. If $d=14$, then in the set-up of \cite[Theorem 3.15]{he} we have $(m_1, \epsilon, \mu)=(2,3,0)$ and so $\pi _1(14,5) =11 < g-1$. Thus $C$ is contained in a surface $T$ of degree at most $4$ by \cite[Theorem 3.15 (i)]{he}. Since $C$ is cut out by cubics and quadrics, we get $d\le 3\deg (T) <14$, a contradiction.

Now assume $d=15$ and then we have $\pi _1(15,5) = 16$. By \cite[Theorem 3.15]{he}, $C$ is contained in a surface $T\subset \PP^5$ with $\deg (T) \le 5$. Since $C$ is scheme-theoretically cut out by quadrics and cubics in $\PP^5$, we get $\deg (T) = 5$ and that $C$ is the complete intersection of $T$ and a cubic hypersurface $T'$. Since $C$ is smooth and $T\cap T' =C$ as schemes, $T$ is smooth in a neighborhood of $C$. Since $C$ is an ample divisor, $T$ has only finitely many singular points. In other words, $T$ is a weak Del Pezzo surface of degree $5$ with $\omega_T \cong \Oo_T(-1)$, giving $C$ with $\omega_C \cong \Oo_C(2)$. Such examples exist on some $X_{3,3}$ as in Example \ref{b3}.

Assume $d=17$. For a general hyperplane $H\subset \PP^5$, set $D:= S\cap H$. Since $S$ has only isolated singularities, $D$ is a smooth and connected curve of degree $6$ spanning $H= \PP^4$. Thus we have $p_a(D)\le 2$. Since $D$ is cut out by quadrics in $H$, there is no line $T\subset H$ with $\deg (T\cap D) \ge 3$. The Berzolari formula for the number of trisecant lines to $D$ in \cite{ho}, \cite{bc} gives $p_a(D) =2$. For a general $U\in |\Ii _{C,\PP^5}(3)|$. We have $S\cap U = C\cup L$ with $L$ a line. Varying $U$ we see that $S$ contains infinitely many lines, i.e. it is the image of a base point free linear system on a $\PP^1$-bundle on $D$. Now for a vector bundle $\Ff$ of rank $2$ on $D$. Set $S':= \PP (\Ff)$. We know that $S$ comes from a complete linear system on $S'$. We use $\sim$ for numerical equivalence on $S'$. Since $D$ has positive genus, two numerically equivalent line bundles may have different cohomology group. We have $\Pic (S')/\sim\cong \ZZ^{\oplus 2} =\ZZ\langle h,f\rangle$, where $f$ is a fiber of the ruling $u: S'\to D$ of $S'$ and $h$ is a section of $u$ with minimal self-intersection. Set $e = -h^2$. By a theorem of C. Segre and M. Nagata in \cite[Ex. V.2.5]{h0}, we have $e\ge -2$. Since $D$ has genus $2$, the adjunction formula applied first to $h\cong D$ and then to $f \cong \PP^1$ gives $\omega _{S'} \sim -2h+(-e+2)f$. Since $\deg (S) =6$, the linear system mapping $S'$ onto $S$ is numerically equivalent to $h +(3+(e/2))f$; in particular $e$ is even. Since $\deg (C\cap L)=3$, the linear system whose member has $C$
as its isomorphic image is numerically equivalent to $3h+af$ for some $a\in \mathbb {Z}$. Since $17 =d= (3h+af)\cdot (h+ (3+(e/2)))f$, we have $a = 8+3(e/2)$. The adjunction formula gives
\begin{align*}
34 = 2g-2 &= (3h+(8+3(e/2))f)(h+(10+(e/2))f)\\& = -3e +30 +(3e/2) +8 +(3e/2),
\end{align*}
which is absurd.

The case $d=18$ is realized on some $X_{3,3}$ by Example \ref{inc}.
\end{proof}

\begin{lemma}
If $\deg (S)= 5$, then we have $\deg (C)=15$ and its associated bundle exists on some $X_{3,3}$ as in Example \ref{b3}.
\end{lemma}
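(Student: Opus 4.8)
The plan is to first confine $d:=\deg(C)$ to $\{14,15\}$, exclude $d=14$ by a Castelnuovo argument, and then analyse $S$ when $d=15$. Recall that in this situation $C$ is connected and non-degenerate with $\omega_C\cong\Oo_C(2)$, the surface $S$ is reduced and irreducible of degree $5$, and for a general cubic $U\supseteq X$ the scheme $S\cap U$ is a reduced curve; since $S\not\subseteq U$ (otherwise $S\subseteq X$, impossible as $S$ is a surface with $S\cap X=C$), $S\cap U$ has degree $3\deg(S)=15$ and contains $C$, so $14\le d\le 15$.

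To exclude $d=14$ one argues exactly as in Lemma \ref{lemma6.5}: then $p_a(C)=d+1=15>11=\pi_1(14,5)$, so by \cite[Theorem 3.15]{he} the curve $C$ lies on a surface $T\subset\PP^5$ of minimal degree, $\deg(T)=4$. Since $\Psi\cap X=C$ and $X$ is cut out by cubics, $C$ is cut out scheme-theoretically in $\PP^5$ by quadrics and cubics; not all of these contain the surface $T$ (otherwise $T\subseteq\Psi\cap X=C$, absurd), so there is a hypersurface $V$ of degree $\le 3$ with $T\not\subseteq V$ and $C\subseteq T\cap V$, whence $d\le\deg(T\cap V)\le 3\cdot 4=12$, a contradiction. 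Thus $d=15$.

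For $d=15$: since $S\cap U$ is a reduced curve of degree $15$ containing the degree-$15$ curve $C$, we get $C=S\cap U$, hence $\Oo_S(C)\cong\Oo_S(3)$. A smooth Cartier divisor on a surface avoids its singular locus (at a singular point of $S$ such a divisor would have embedding dimension $\ge 2$, hence be singular), so $C\subset S_{\mathrm{reg}}$, and since $C$ is an ample divisor it meets every curve of $S$, so $\mathrm{Sing}(S)$ is finite. Near $C$, where $\omega_S$ is invertible, adjunction gives $\omega_C\cong\omega_S(3)|_C$; writing $H=\Oo_S(1)$ and $C\in|3H|$, and using $p_a(C)=16$ (from $\omega_C\cong\Oo_C(2)$) together with $H^2=\deg(S)=5$, the formula $2p_a(C)-2=(K_S+3H)\cdot 3H=3K_S\cdot H+9H^2$ forces $K_S\cdot H=-5=-H^2$, so $(S,H)$ has sectional genus $1$. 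By the classification of non-degenerate degree-$5$ surfaces in $\PP^5$ (Fujita's del Pezzo varieties \cite{f1,f2}; all the non-del-Pezzo ones — scrolls, their isomorphic projections, cones over rational curves of degree $5$ in $\PP^4$ — have sectional genus $0$, i.e. $K_S\cdot H=-7$, compare the analysis in Lemma \ref{lemma5.3}), a surface of sectional genus $1$ is a weak del Pezzo surface of degree $5$ — allowing finitely many rational double points, or the cone over an elliptic normal quintic — so $\omega_S\cong\Oo_S(-1)$ and $\Ii_{S,\PP^5}(2)$ is spanned. Then $C=S\cap U\in|{-3K_S}|$ with $\omega_C\cong\Oo_C(2)$, which is exactly the situation of Example \ref{b3}, and that example exhibits such a bundle on some $X_{3,3}$, with $\deg(C)=15$. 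The hard part is this last step: extracting $\omega_S\cong\Oo_S(-1)$ and the Example \ref{b3} structure from the single numerical identity $K_S\cdot H=-5$ needs the classification of degree-$5$ surfaces in $\PP^5$, which is somewhat delicate because $S$ may be singular or not linearly normal — essentially the same bookkeeping as in Lemma \ref{lemma5.3}.
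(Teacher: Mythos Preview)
Your proof is correct, and the overall architecture matches the paper's: bound $d\in\{14,15\}$, exclude $14$, then identify the $d=15$ case with Example~\ref{b3}. The genuine difference is in how you exclude $d=14$. You invoke the Castelnuovo--Harris bound $\pi_1(14,5)=11<p_a(C)$ to force $C$ onto a degree-$\le 4$ surface $T$, then use that $C$ is cut out by quadrics and cubics to get $d\le 3\deg(T)\le 12$ --- exactly the argument the paper uses for the $\deg(S)=6$, $d=14$ case in Lemma~\ref{lemma6.5}. The paper instead argues directly via linkage: for general $U\supset X$ one has $S\cap U=C\cup L$ with $L$ a line, and since $U\cap U'\cap S=C$ the scheme $L\cap U'$ (of length $3$) lies in $C$, so $\deg(C\cap L)\ge 3$; then $p_a(C\cup L)\ge p_a(C)+2$, while $p_a(C\cup L)=16$, forcing $g\le 14<15$. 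Your route is more uniform with the rest of the section and avoids computing $p_a(S\cap U)$; the paper's route is more self-contained and does not appeal to the refined genus bound. For $d=15$ you go further than the paper: you extract the sectional-genus identity $K_S\cdot H=-5$ from adjunction and then use the classification of degree-$5$ surfaces in $\PP^5$ to pin down $S$ as weak del Pezzo, whereas the paper simply notes $C=S\cap U$, observes $S$ has at worst finitely many singular points, and refers to Example~\ref{b3} for existence. Your extra work is not needed for the lemma as stated, but it does support the stronger assertion (implicit in the section's main proposition) that every such bundle is of the type in Example~\ref{b3}; your caveat that the singular/non-linearly-normal cases require the bookkeeping of Lemma~\ref{lemma5.3} is well placed.
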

\begin{proof}
Note that $d\ge 14$, because $\pi _5(14)=15$. First assume $d=15$. In this case $C$ is the complete intersection of $S$ and a cubic hypersurface and hence $S$ is smooth at each point of $S$ and with finitely many singular points. Example exists on some $X_{3,3}$ as in Example \ref{b3}. Now assume $d=14$. If $U\subset \PP^3$ is a general hypersurface of degree $3$ containing $X$, then we get $U\cap S = C\cup L$ with $L$ a line. Take a cubic hypersurface $U'\ne U$ with $U'\supset X$. Since $U\cap U'\cap S =X\cap S =C$, we get that the $0$-dimensional scheme $L\cap U'$ of degree $3$ is contained in $C$. Therefore $p_a(C\cup L) \ge p_a+2$. Since $p_a(C\cup L) = 16$, we get $g \le 14$, a contradiction.
\end{proof}

\subsection{Case of $s\ge 2$} $ $

Let us assume that $C$ is not connected.

\begin{lemma}\label{lemma6.7}
We have $S_i \ne S_j$ for some $i\ne j$.
\end{lemma}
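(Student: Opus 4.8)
The plan is to argue by contradiction: suppose $S_1=\cdots=S_s=:S$ is a single reduced and irreducible component of $\Psi$. Then $C=C_1\sqcup\cdots\sqcup C_s\subseteq S$, and since $S\subseteq\Psi$ and $\Psi\cap X=C$ as schemes we get $S\cap X=C$, at least set-theoretically; moreover $h^0(\Ee(-1))=0$ gives $h^0(\Ii_C(1))=0$, so $C$, and hence $S$, spans $\PP^5$. First I would rule out the extreme dimensions of $S$. If $\dim S=1$ then $S$ is an integral curve containing the disjoint curves $C_1$ and $C_2$, which forces $C_1=S=C_2$ and contradicts $s\ge 2$. If $\dim S\ge 4$, then $S\not\subseteq X$ (because $\dim(S\cap X)=\dim C=1<\dim S$), so $S\cap X$ would have dimension $\ge\dim S+3-5\ge 2$, absurd.

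Thus $\dim S\in\{2,3\}$; write $X=U\cap U'$ with $U,U'$ cubic hypersurfaces. If $\dim S=3$, then $S$ is a non-degenerate threefold in $\PP^5$ contained in a complete intersection of two quadric hypersurfaces, so $\deg S\in\{3,4\}$. If $\deg S=4$ then $S$ is that complete intersection, so $\omega_S\cong\Oo_S(-2)$ and $\omega_C\cong\omega_S(6)|_C\cong\Oo_C(4)\ne\Oo_C(2)$, a contradiction. If $\deg S=3$ then $d=27$ and the Grauert--Riemenschneider argument of part (a) of Proposition \ref{prop.6.1} applies verbatim, since it uses only that $S$ is an irreducible threefold with $S\cap X=C$, $C$ smooth and non-degenerate, and it again yields a contradiction.

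The remaining case $\dim S=2$ is where the real work lies. Since $S\not\subseteq X$, one of $U$ and $U'$, say $U$, does not contain $S$, so $S\cap U$ is a curve; since $(S\cap U)\cap U'=S\cap X$ is one-dimensional we must have $S\cap U\subseteq U'$, whence $C=S\cap U\cap U'=S\cap U$ set-theoretically. Thus $C$ is the support of an effective Cartier divisor on the integral projective surface $S$ lying in the ample class $\Oo_S(3)$, and I would finish by invoking the classical fact that an effective ample Cartier divisor on an integral projective variety of dimension at least $2$ is connected. To see this, pass to the normalization $\nu\colon\widetilde{S}\to S$, on which $\nu^*\Oo_S(3)$ is ample; Serre vanishing gives $h^1(\widetilde{S},\nu^*\Oo_S(-3n))=0$ for $n\gg 0$, so the restriction $h^0(\Oo_{\widetilde{S}})\to h^0(\Oo_{n\nu^*(S\cap U)})$ is surjective, and since $h^0(\Oo_{\widetilde{S}})=1$ the divisor $n\nu^*(S\cap U)$, and therefore $S\cap U$ and $C$, is connected, contradicting $s\ge 2$. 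The two steps needing the most care will be this connectedness argument when $S$ is singular and verifying that the argument of Proposition \ref{prop.6.1}(a) transfers cleanly to the present non-connected setting.
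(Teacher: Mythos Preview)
Your argument for $\dim S\in\{1,3\}$ is fine, and in fact for $\dim S=3$ it is essentially the paper's reasoning (the paper just says ``$C$ is a complete intersection of hypersurfaces, hence connected''). The genuine gap is in the $\dim S=2$ case. The implication
\[
(S\cap U)\cap U'=C\ \text{is one-dimensional}\ \Longrightarrow\ S\cap U\subseteq U'
\]
is false: it would be valid if $S\cap U$ were irreducible, but $S\cap U$ can be reducible. In general one only gets $S\cap U=C+D$ as divisors on $S$, with $D$ the residual part of the cubic section; the condition $(S\cap U)\cap U'=C$ forces $D\cap U'\subset C$ scheme-theoretically, but it does \emph{not} force $D=0$. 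A toy model on $\PP^2$: with $U=\{xy=0\}$ and $U'=\{y=0\}$ one has $(S\cap U)\cap U'=\{y=0\}$, smooth and one-dimensional, yet $S\cap U\nsubseteq U'$. Thus $C$ need not be the support of a divisor in $|\Oo_S(3)|$, and your connectedness-of-ample-divisors argument does not apply. (If $S$ happened to lie in some cubic of the pencil $|\Ii_X(3)|$ you would be done, but nothing guarantees this.)

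The paper proceeds quite differently for $\dim S=2$: it uses only the numerical constraints. Since $C\subset S\cap U$ with $U$ a cubic not containing $S$, one has $d\le 3\deg S$; since $S$ sits in the base locus of at least three quadrics, $\deg S\le 8$. Combined with $d_i\ge 9$ for every component (established earlier in Section~6) and $s\ge 2$, this forces $\deg S\in\{6,7,8\}$ and $d\in\{18,\dots,24\}$. For each value of $\deg S$ one finds that some $C_i$ must have $d_i=9$, hence $C_i$ is a complete intersection of two cubic surfaces inside $\langle C_i\rangle\cong\PP^3$. But then $C_i\subset S\cap\langle C_i\rangle$ and a degree count ($\deg C_i=9>\deg S$) shows $S$ cannot meet $\langle C_i\rangle$ properly, forcing $S$ into a hyperplane and contradicting non-degeneracy of $C\subset S$. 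So the paper never attempts to prove $C$ is connected directly; it derives a numerical contradiction instead. To repair your approach you would either need to show $S$ lies in some cubic of the pencil, or find an independent connectedness argument for the base curve of a pencil in $|\Oo_S(3)|$---neither of which is immediate.
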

\begin{proof}
Assume $S_i=S_j$ for all $i\ne j$ and let $S=S_i$. We have $S\cap X = C$ as schemes and $S$ is irreducible. Note that $\dim (S)\le 3$ since $S\cap X=C$ is a curve and $X$ is a complete intersection. Moreover we have $\dim (S)>1$ since $C$ is not connected. If $\dim (S)=3$, then $C$ is the complete intersection of $3$ hypersurfaces of $\PP^5$ and so it is connected, a contradiction. Thus we may assume $\dim (S)=2$.

Since $S$ is a surface cut out by quadrics in $\PP^5$, so $\deg (S)\le 8$ and the equality holds if and only if $S$ is a complete intersection of $3$ quadric surfaces. Thus we have $d\le 28$. It implies $s=2$, except when $s=3$ and $\dim (\langle C_i\rangle )=3$ for all $i$ and $\deg (S) =8$. But this exception does not arise because each quadric hypersurface of $\PP^5$ containing one such $C_i$ also contains the $3$-dimensional linear subspace $\langle C_i\rangle$. We also get $d \le 3\deg (S)$. Since $s\ge 2$, we have $\deg ( C) \ge 18$. Since $C\subseteq S\cap U$, then $\deg (S) \ge 6$.

If $\deg (S)=8$, then we have $d=18$ as in the proof of Lemma \ref{lemma6.2}. Since $s\ge 2$, we get $s=2$, $d_1=d_2 =9$ and so $S_i =\langle C_i\rangle \cong \PP^3$, a contradiction. Assume $\deg (S)=7$. Since $d \le 21$, we get $s=2$ and at least one $C_i$, say $C_1$, has degree $9$. In this case we have $S =\langle C_1\rangle \cong \PP^3$, a contradiction. If $\deg (S)=6$, we get a contradiction similarly because $d\le 18$.
\end{proof}

\begin{lemma}\label{a}
We have $\dim (S_i)\ge 2$ for all $i$.
\end{lemma}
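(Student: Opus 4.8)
The plan is to argue by contradiction: suppose $\dim(S_i)=1$ for some $i$, so that $S_i=C_i$ and $C_i$ is an irreducible component of $\Psi_{red}$ of codimension $4$ in $\PP^5$. Recall that $\dim\langle C_i\rangle\ge 3$, and that $d_i=9$ when $\dim\langle C_i\rangle=3$, $d_i\ge 11$ when $\dim\langle C_i\rangle=4$, and $d_i\ge 14$ when $\dim\langle C_i\rangle=5$. The whole point is to play these lower bounds on $d_i$ off against the fact that a component of the base locus of the quadrics through $C$ has bounded degree. Since $\Psi$ may well carry surface or threefold components besides $C_i$, I would never pass to a single global complete intersection; instead I would repeatedly use the principle that a small enough subvariety through $C_i$ is forced to lie inside $\Psi$, hence inside $S_i$, contradicting $\dim(S_i)=1$.

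First I would treat $\dim\langle C_i\rangle\in\{3,4\}$. Put $L:=\langle C_i\rangle$, a linear subspace of codimension $5-\dim L\in\{1,2\}$. Because $\Psi$ is the base scheme of the quadrics through $C$, its ideal is generated by those quadrics, whence $\Psi\cap L=V(V')$ inside $L$, where $V'\subseteq H^0(L,\Oo_L(2))$ is the restricted linear system. At a general $p\in C_i$ the scheme $\Psi_{red}$ equals $C_i$ near $p$ (as $C_i$ is a component of $\Psi_{red}$), so $V(V')$ has $C_i$ as a component near $p$, of codimension $k:=\dim L-1\in\{2,3\}$ in $L$. If $V'=0$ then $L\subseteq\Psi_{red}$, contradicting that $C_i\subsetneq L$ is a component of $\Psi_{red}$; otherwise $C_i$ is a proper (codimension-$k$) component of the intersection of $k$ general members of $V'$, so $\deg(C_i)\le 2^{k}\in\{4,8\}$ by refined B\'ezout (the degree estimate used earlier for $\Phi$). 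This is incompatible with $d_i=9$ (resp. $d_i\ge 11$).

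There remains $\dim\langle C_i\rangle=5$. Here $d_i\ge 14$ as recalled, and $d_i\le 16$ because $C_i$, being a codimension-$4$ component of $\Psi$, is a proper component of the intersection of $4$ general quadrics through $C$, whose intersection cycle has degree $2^4=16$. When $d_i\in\{14,15\}$ the genus $g_i=d_i+1$ is large — one compares with $\pi(14,5)=15$ and with $\pi_1(15,5)=16$, exactly as in the proof of Lemma \ref{lemma6.5} — so \cite[Theorem 3.15]{he} forces $C_i$ onto an irreducible surface $T$ with $\deg(T)\le 5<d_i/2$; then every quadric through $C_i$ contains $T$ (a quadric missing $T$ would cut it in a curve of degree $2\deg(T)<d_i=\deg(C_i)$ through $C_i$), so $T\subseteq\Psi$ and $\dim(S_i)\ge 2$, a contradiction. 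When $d_i=16$ the maximality of the degree forces $C_i$ to be the complete intersection of $4$ quadrics; but every quadric through $C=C_i\sqcup(\bigsqcup_{j\ne i}C_j)$ must also vanish on the $C_j$, which are disjoint from $C_i$, so the space of quadrics through $C$ is a proper subspace of the $4$-dimensional space of quadrics through $C_i$, hence too small for its base locus to have a codimension-$4$ component — contradicting that $C_i$ is one. This exhausts the cases.

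The step I expect to be the main obstacle is the last one, $d_i=16$: one must be sure that $C_i$ is genuinely the complete intersection of $4$ quadrics, and not merely a degree-$16$ component of a reducible or non-equidimensional scheme cut out by quadrics. This is precisely where the hypothesis $s\ge 2$ — and thus the distinction from the $s=1$ complete-intersection curves of Example \ref{b2} — must enter, and it should come out of an excess-intersection/degree count on the intersection of $4$ general quadrics through $C$, ruling out a surface component of $\Psi$ meeting $C_i$. More generally, the recurring technical care is that $\Psi$ need not be equidimensional, which is why every argument above is arranged to use only the local behaviour of $\Psi$ (and of $\Psi\cap\langle C_i\rangle$) near a general point of $C_i$.
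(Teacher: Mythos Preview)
Your argument is correct and follows the same overall architecture as the paper's proof: assume $\dim(S_i)=1$, split on $\dim\langle C_i\rangle$, and in each case play the lower bound on $d_i$ against a B\'ezout-type upper bound coming from quadrics. The treatment of $\dim\langle C_i\rangle\in\{3,4\}$ and of $d_i=16$ is essentially identical to the paper's (your final worry about non-equidimensionality is exactly what the refined B\'ezout inequality $\sum_W \deg W \le 2^4$ over all components of the intersection of four general quadrics handles, and the paper uses it in the same implicit way).

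The one genuine variation is the case $d_i\in\{14,15\}$ with $\langle C_i\rangle=\PP^5$. You invoke \cite[Theorem 3.15]{he} uniformly to place $C_i$ on an irreducible surface $T$ with $2\deg(T)<d_i$, forcing $T\subseteq\Psi$. The paper's primary route is instead to analyse the residual $E_i\setminus C_i$ in the complete intersection $E_i$ of four general quadrics through $C_i$: for $d_i=15$ the residual is a line $L$ with $\omega_{E_i}\vert_{C_i}\cong\omega_{C_i}$ forcing $L\cap C_i=\emptyset$, contradicting connectedness of $E_i$; similarly for $d_i=14$. The paper only mentions the Harris-theorem route as an alternative for $d_i=14$. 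Your approach is arguably cleaner and more uniform; the paper's buys a self-contained argument without appeal to the second Castelnuovo bound. One small slip: you write $\pi(14,5)=15$, but what is needed for \cite[Theorem 3.15]{he} is the comparison $g_i=15>\pi_1(14,5)=11$ (and $g_i=16=\pi_1(15,5)$, as in Lemma~\ref{lemma6.5}); the Castelnuovo bound $\pi$ itself is not the relevant quantity here.
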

\begin{proof}
Assume the existence of an index $i$ with $\dim (S_i) = 1$, i.e. $S_i = C_i$. If $\dim (\langle C_i\rangle ) =3$, then we have $S_i =\langle C_i\rangle$, a contradiction. If $\dim (\langle C_i\rangle ) =4$, then we get $d_i \le 8$, because $S_i=C_i$ is cut out by quadrics inside $\langle C_i\rangle =\PP^4$, a contradiction. Now assume $\dim (\langle C_i\rangle ) =5$ and then we have $d_i \ge 14$. Let $E_i\subset \PP^5$ be the intersection of $4$ general quadric hypersurfaces containing $C_i$. Since $C_i$ is cut out by quadrics, so $E_i$ is a curve of degree $16$ containing $C_i$ and so we have $14 \le d_i \le 16$ . Notice that $d_i=16$ if and only if $C_i =E_i$. By the adjunction formula we have $\omega _{C_i} = \Oo _{C_i}(2)$.

If $d_i =16$, then $\Psi \ne E_i$ and so $h^0(\Ii _C(2)) \le 3$. Since $\Psi$ is cut out by at most $3$ quadric equations, each irreducible component of $\Psi _{red}$ has dimension at least $2$. It implies $\dim (S_i)\ge 2$, a contradiction. If $d_i=15$, then we have $E_i = C_i\cup L$ with $L$ a line. Since $\omega _{E_i} \cong \Oo _{E_i}(2)$ and $\omega _{C_i} \cong \Oo _{C_i}(2)$, we get $L\cap C_i= \emptyset$, contradicting the connectedness of any complete intersection curve. If $d_i=14$, then we have $E_i = C_i\cup D$ with $D$ either a smooth conic or the disjoint union of two lines or a double structure over a line. We can get a contradiction as in the case $d_i=15$; we may also use \cite[Theorem 3.15]{he} to get that $C_i$ is contained in an irreducible surface of degree at most $5$; since $C_i\subset E_i$ and $E_i$ are cut out by quadrics, we would get $d_i\le 10$, a contradiction.
\end{proof}

Assume for the moment $\dim (S_1)=3$. Since $S_1\cap X$ is either $C_1$ or $C_1\cup C_3$ as a scheme, we have $d-d_2 \ge 9\deg (S_1)$. Since $d_2\ge 9$ and $d<36$, we get $\deg (S_1) \le 2$. Assume for the moment $\deg (S_1) =2$. We get that $S_1$ is a quadric hypersurface of a $4$-dimensional space. and hence $\omega _{S_1} \cong \Oo _{S_1}(-3)$. Hence $\omega _{S_1\cap X} \cong \Oo _{S_1\cap X}(3)$. Since
$C_1$ is a connected component of $S_1\cap X$, we get a contradiction.
Hence all 3-dimensional $S_i$ are linear spaces. So we get that $\dim (S_i) =3$ if and only if $d_i=9$ and $C_i$ is the complete intersection of $X$ with a codimension 2 linear
subspace of $\PP^5$. Now fix $i\in \{1,\dots ,s\}$ with $\dim (S_i)=2$ (if any). Since $\deg (C_i)\ge 11$ and $C_i \subseteq S_i\cap U$, we get $\deg (S_i) \ge 4$; if $\dim (\langle C_i\rangle )=5$, then we also get $\deg (S_i) \ge 5$.

\begin{lemma}\label{b}
There exists an index $i$ with $\dim (S_i)=3$.
\end{lemma}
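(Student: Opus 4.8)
The plan is to argue by contradiction. Suppose no $S_i$ has dimension $3$; combined with Lemma~\ref{a} this means every $S_i$ is a surface. First I would use Lemma~\ref{lemma6.7}: the distinct surfaces among the $S_i$ are exactly the irreducible components of $\Phi$, and there are at least two of them. By the discussion preceding this Lemma, each satisfies $\deg S_i\ge 4$ (and $\ge 5$ if $\langle C_i\rangle=\PP^5$), so $\deg\Phi\ge 8$; since $\deg\Phi\le 2^{5-2}=8$, we get $\deg\Phi=8$, hence $\Phi=\Psi$ is a complete intersection of three quadric hypersurfaces of $\PP^5$, with exactly two irreducible components, each of degree $4$. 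Because $\deg S_i=4<5$ no $C_i$ spans $\PP^5$, and as $\dim\langle C_i\rangle\ge 3$ with the value $3$ forcing $\dim S_i=3$, we obtain $\dim\langle C_i\rangle=4$ and $d_i\ge 11$ for all $i$. On the other hand $C\subseteq\Psi\cap X\subseteq\Psi\cap U$ for any cubic $U\supseteq X$, a curve of degree $3\deg\Psi=24$, so $d:=\deg C\le 24$; together with $d=\sum_i d_i\ge 11s\ge 22$ this gives $22\le d\le 24$ and $s=2$.

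Next I would take $U$ to be a general cubic hypersurface with $X=U\cap U'$. Since the homogeneous ideal $(q_1,q_2,q_3)$ of $\Psi=Q_1\cap Q_2\cap Q_3$ contains no linear form, $\Psi$ is non-degenerate, so $\Psi\not\subseteq X$, hence $\Psi\not\subseteq U$; thus $\tilde C:=\Psi\cap U=V(q_1,q_2,q_3,u)$ is a complete intersection curve of degree $24$ in $\PP^5$, with $\omega_{\tilde C}\cong\Oo_{\tilde C}(3)$ by adjunction and therefore $p_a(\tilde C)=37$. Because $C\subseteq X\subseteq U$ we have $C\subseteq\tilde C$; write $\tilde C=C\cup\tilde C'$ with $\tilde C'$ the residual curve, of degree $24-d$. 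As $\Psi\cap X=C$ as schemes, the fixed divisor of the linear subsystem of $|\Oo_\Psi(3)|$ cut out by cubics through $X$ is exactly $C$, so its moving part — a general member being $\tilde C'$ — has no fixed component and is reduced on the reduced surface $\Psi$.

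Finally I would combine two identities on $\tilde C=C\cup\tilde C'$. The Mayer--Vietoris sequence $0\to\Oo_{\tilde C}\to\Oo_C\oplus\Oo_{\tilde C'}\to\Oo_{C\cap\tilde C'}\to 0$ yields $p_a(\tilde C)=p_a(C)+p_a(\tilde C')+\deg(C\cap\tilde C')-1$, and $p_a(C)=\sum_i(d_i+1)-(s-1)=d+1$ since $\omega_{C_i}\cong\Oo_{C_i}(2)$. The adjunction formula for the subcurve $C$ of the Gorenstein curve $\tilde C$ gives $\omega_{\tilde C}|_C\cong\omega_C\otimes\Oo_C(C\cap\tilde C')$, whence $\deg(C\cap\tilde C')=\deg\Oo_C(3)-\deg\Oo_C(2)=d$. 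Substituting, $37=(d+1)+p_a(\tilde C')+d-1$, i.e. $p_a(\tilde C')=37-2d$. But $\tilde C'$ is reduced of degree $24-d$, so $p_a(\tilde C')\ge 1-(24-d)=d-23$; thus $37-2d\ge d-23$, forcing $d\le 20$ and contradicting $d\ge 22$. (When $d=24$ one has $\tilde C'=\emptyset$, so $\tilde C=C$ and $\omega_C\cong\Oo_C(3)\ne\Oo_C(2)$, again absurd.) This contradiction shows that some $S_i$ has dimension $3$. I expect the main obstacle to be the first step: establishing that $\dim S_i=2$ for all $i$ forces the rigid configuration $\Psi=$ complete intersection of three quadrics; once this is in place, the genus count on $\tilde C=\Psi\cap U$ closes the argument, the only delicate point there being the validity of the adjunction identity $\omega_{\tilde C}|_C\cong\omega_C\otimes\Oo_C(C\cap\tilde C')$ for a general $U$.
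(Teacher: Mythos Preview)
Your argument is correct and shares its opening with the paper's proof: both assume $\dim S_i=2$ for all $i$, invoke Lemma~\ref{lemma6.7} together with $\deg S_i\ge 4$ to force exactly two distinct components $S_1,S_2$ of degree $4$, and deduce $\dim\langle C_i\rangle=4$, $d_i\ge 11$, $s=2$. From there the two proofs diverge.

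The paper proceeds case by case. It first rules out $d_i=11$ by looking at the single surface $S_i$ (a complete intersection of two quadrics in $\langle C_i\rangle\cong\PP^4$, so $\omega_{S_i}\cong\Oo_{S_i}(-1)$): writing $S_i\cap U=C_i\cup L$ with $L$ a line, the adjunction $\omega_{S_i\cap U}|_L\cong\Oo_L(2)$ is incompatible with $\deg(C_i\cap L)=2$. With $d_1=d_2=12$ it then shows, by a direct dimension count using $H_i=\langle C_i\rangle$ and two general points of $H_1\cap H_2$, that $h^0(\PP^5,\Ii_{S_1\cup S_2}(2))\le 3$; hence $S_1\cup S_2$ is a complete intersection of three quadrics and $C=(S_1\cup S_2)\cap U$ is connected, contradicting $s=2$.

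You instead obtain $\Psi=\Phi$ a complete intersection of three quadrics \emph{immediately} from the equality case of $\deg\Phi\le 2^{3}$, and then run a single liaison computation on the complete intersection curve $\tilde C=\Psi\cap U$: adjunction gives $\deg(C\cap\tilde C')=d$, Mayer--Vietoris gives $p_a(\tilde C')=37-2d$, and reducedness of $\tilde C'$ yields $p_a(\tilde C')\ge 1-(24-d)$, forcing $d\le 20<22$. This is more uniform—one inequality disposes of $d\in\{22,23,24\}$ at once—at the cost of the Bertini step for reducedness of $\tilde C'$, which you correctly isolate as the delicate point. (It is indeed fine: away from the base locus $C$ a general member of the pencil on the reduced surface $\Psi$ is reduced, and since the residual in the Gorenstein curve $\tilde C$ is Cohen--Macaulay with no component in common with $C$, generic reducedness suffices.) The paper's route avoids this Bertini input but pays for it with two separate ad hoc computations; your route trades those for one clean genus bound.
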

\begin{proof}
Assume $\dim (S_i) =2$ for all $i$. By the previous paragraph and the assumption $S_i\ne S_j$ for some $i, j$, we get that among the surfaces $S_i$, $1\le i \le s$, there are exactly two different surfaces, say $S_1$ and $S_2$. Since $S_1\cup S_2$ is contained in the intersection $E$ of $3$ quadric hypersurfaces with $\dim (E)=2$, then $\deg (S_1)=\deg (S_2) =4$. By Lemma \ref{a} we also get $\dim (\langle C_i\rangle ) = 4$ for all $i$. Since $S_1\cap U$ has degree $12$, it cannot contain two components of $C$. Thus we have $s=2$ and $d_i\in \{11,12\}$ for all $i$. Since $\dim (\langle C_i\rangle ) =4$, $\deg (S_i) =4$ and $(S_1\cup S_2)\cap \langle C_i\rangle$
is cut out by quadric hypersurfaces in $\langle C_i\rangle$, we get that each $S_i$ is a complete intersection of two quadric hypersurfaces in $\langle C_i\rangle$.

Assume for the moment $d_i=11$. Since $C_i \subset S_i\cap U$ for a general cubic hypersurface $U$ containing $X$, we get $S_i\cap U = C_i\cup L$ with $L$ a line. Since $S_i$ is a complete intersection of two quadrics in $\PP^4$, then $\omega _{S_i}\cong \Oo _{S_i}(-1)$ and hence $\omega _{S_i\cap U_i} \cong \Oo _{S_i\cap U}(2)$. Since $S_i\cap U$ is a complete intersection, it is connected and $p_a(C_i\cup L) = 13$. Since $g_i =12$, we get $\deg (C_i\cap L) =2$ and hence either $C_i\cup L$ is nodal or it has a unique singular point, which is an ordinary tacnode. Since ${\omega _{C_i\cup U}}_{\vert_L} \cong \Oo_L(2)$ and $\deg (L\cap C_i) =2$, we get a contradiction.

Therefore $d_1=d_2=12$ and each $C_i$ is the complete intersection of two quadric hypersurface of $\PP^5$, a cubic hypersurface and a hyperplane. Let $U\subset \PP^5$ be a general cubic hypersurface containing $C$ and then we have $C =U\cap (S_1\cup S_2)$. Since $s\ge 2$, to get a contradiction it is sufficient to prove that $S_1\cup S_2$ is the complete intersection of $3$ quadric hypersurfaces. Since $h^0(\PP^5,\Ii _{S_1\cup S_2}(2)) =h^0(\Ii _C(2)) \ge 3$ and $S_1\cup S_2$ is a surface of degree $8$, it is sufficient to prove that $h^0(\PP^5,\Ii _{S_1\cup S_2}(2)) \le 3$. Set $H_i:= \langle C_i\rangle$ and $M:= H_1\cap H_2$. Since $C$ spans $\PP^5$ and $s=2$, we have $H_1\ne H_2$. Therefore $M$ is a hyperplane of each $H_i$. Fix a general $A \subset M$ with $\sharp (A)=2$.
Since $S_i$ is a complete intersection of two quadric hypersurfaces inside $H_i$, we have $h^0(H_i,\Ii _{S_i,H_i}(2)) =2$ and $h^0(H_i ,\Ii _{S_i\cup M,H_i}(2)) =0$. Since $\sharp (A) =2$ and $A$ is general in $M$, so $h^0(H_i ,\Ii _{S_i\cup A,H_i }(2)) =0$. Therefore we have $h^0(H_1\cup H_2,\Ii _{S_1\cup S_2,H_1\cup H_2}(2)) \le 2$ and so $h^0(\PP^5,\Ii _{S_1\cup S_2}(2)) \le 3$.
\end{proof}

By Lemmas  \ref{a} and \ref{b} we may assume $2\le \dim (S_i)\le 3$ for all $i$ and the existence of one index $j$ with $\dim (S_j) = 3$. Without loss of generality we assume $\dim (S_1)=3$ and so $S_1 =\langle C_1\rangle$ is a $3$-dimensional linear subspace, $d_i=9$, and $C_1$ is the complete intersection of $\langle C_1\rangle$ and two cubic hypersurfaces.

\begin{lemma}\label{c}
We have $\dim (S_i)=3$ for all $i$.
\end{lemma}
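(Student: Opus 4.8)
The plan is to argue by contradiction, so suppose some component of $C$ — after relabelling, $C_2$ — lies on no $3$-dimensional irreducible component of $\Psi$; by Lemma \ref{a} it then lies on a $2$-dimensional one, which I will call $S_2$, and $\deg (C_2) = d_2\ge 11$ since $\dim \langle C_2\rangle \ge 4$. Recall from the paragraph preceding the statement that we have fixed an index, say $1$, with $S_1 = \langle C_1\rangle \cong \PP^3$ and $C_1$ the complete intersection inside $S_1$ of two cubic hypersurfaces; hence $\Ii _{C_1,S_1}$ is generated in degree $3$, so $h^0(S_1,\Ii _{C_1,S_1}(2)) = 0$, and therefore the restriction to $S_1$ of any quadric through $C$ vanishes identically. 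Thus every quadric through $C$ contains $S_1$, so $S_1\subseteq \Psi$; and since $S_1\cap X$ is $C_1$ or $C_1\cup C_3$ while $C_2$ is a distinct component, $C_2\cap S_1 = C_2\cap (S_1\cap X) = \emptyset$.

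Next I would study the rational projection $\pi : \PP^5\to \PP^1$ away from $S_1$, whose fibres are exactly the hyperplanes through $S_1$. Since $C_2$ is disjoint from $S_1$, $\pi$ restricts to a morphism on $C_2$; it is non-constant, for otherwise $C_2$ would lie in a single such hyperplane, which in coordinates with $S_1 = \{x_4 = x_5 = 0\}$ is $\{ax_4 = bx_5\}$, and then either one of $x_4,x_5$ vanishes identically on $C_2$ — forcing $C_2$ to meet $S_1$ — or $x_4$ is a nowhere-vanishing section of $\Oo _{C_2}(1)$, impossible as $\deg \Oo _{C_2}(1) = d_2 > 0$. Hence $\pi |_{S_2}$ dominates $\PP^1$. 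On a general fibre $H_q\cong \PP^4$, each quadric through $C$ contains $S_1$ and so restricts on $H_q$ to the union of $S_1$ with a hyperplane of $H_q$; therefore $(\Psi\cap H_q)_{red} = S_1\cup \Lambda _q$ with $\Lambda _q\subseteq H_q$ a linear subspace. As $S_2$ is irreducible, not contained in $S_1$, and $S_2\cap H_q\subseteq S_1\cup \Lambda _q$ for all $q$, we get $S_2\subseteq \overline{\bigcup _q\Lambda _q}$; the latter is an irreducible subvariety of $\Psi$ of dimension $1+\dim \Lambda _q$ containing $C_2$, so our hypothesis forbids dimension $\ge 3$ and forces $\dim \Lambda _q = 1$ for general $q$. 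Consequently $S_2 = \overline{\bigcup _q\Lambda _q}$ is a surface scroll ruled in lines over $\PP^1$.

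Finally I would derive a numerical contradiction on the scroll. Let $\nu : \widetilde{S}_2\to S_2$ be the minimal desingularization, so $\widetilde{S}_2\cong F_k$ for some $k\ge 0$; put $\widetilde{H} := \nu^*(\Oo _{\PP^5}(1)|_{S_2})$ and $e := \widetilde{H}^2 = \deg (S_2)$. A general hyperplane section of a scroll is rational, so $(K_{\widetilde{S}_2}+\widetilde{H})\cdot \widetilde{H} = -2$. Let $D$ be the reduced divisor on $\widetilde{S}_2$ supported on $S_2\cap X$; it is a disjoint union of the curves $C_i$ lying on $S_2$, each with $\omega _{C_i}\cong \Oo _{C_i}(2\widetilde{H})$, so adjunction together with this disjointness gives $(K_{\widetilde{S}_2}+D-2\widetilde{H})\cdot D = 0$. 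Since $S_2\cap U$ lies in $|3\widetilde{H}|$ for a general cubic $U$ through $X$, one may write $D = 3\widetilde{H}-M$ with $M$ effective, say $M\sim \mu h+\beta f$ with $0\le \mu \le 3$ (where $h,f$ are the usual generators of $\Pic (F_k)$). Substituting and expanding on $F_k$ turns the identity above into a single linear relation among $\mu,\beta,k,e$; running through the four values of $\mu$ one finds a contradiction each time — $\mu = 3$ makes $D$ a sum of rulings, forcing some $C_i\cong \PP^1$; $\mu = 2$ forces $\beta$ large enough that $D$ is not effective; $\mu = 1$ forces $e = -2$; and $\mu = 0$ forces $\beta = -3$. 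Hence no such $C_2$ exists, which is the assertion. The hard part is this last step's bookkeeping: $S_2\cap U$ may contain components other than $C_2$, so one must work with the whole class $3\widetilde{H}$ rather than $C_2$ alone, and, when $S_2$ is a cone, handle a possible component of $C$ through its vertex via $\widetilde{S}_2$.
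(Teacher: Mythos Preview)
Your approach is correct in outline and is genuinely different from the paper's. The paper argues by an extended case analysis: it splits on $\dim\langle C_2\rangle\in\{4,5\}$, then on $\deg(S_2)\in\{4,5,6,7,8\}$, and inside $\deg(S_2)=6$ further on $d_2\in\{15,16,17,18\}$, invoking Castelnuovo bounds, secant-variety geometry (via $\langle C_1\rangle\cap\Sec(C_2)$), the Berzolari trisecant formula, and for $d_2=16$ an argument through bielliptic curves and Clifford indices. Your route bypasses all of this: the projection away from $S_1=\langle C_1\rangle$ forces $S_2$ to be a rational scroll over $\PP^1$, and a single adjunction identity on the resolving $F_k$ disposes of every case at once. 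What your argument buys is uniformity and brevity; what the paper's buys is a finer description of the possible $S_2$ (and it re-uses machinery already built in the connected case).

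Two points deserve tightening. First, the set $\overline{\bigcup_q\Lambda_q}$ need not be irreducible --- it is, set-theoretically, the closure of $\Psi_{red}\setminus S_1$. What you actually need (and what your argument really proves) is that the incidence $\{(q,p):p\in\Lambda_q\}$ over the open locus where $\dim\Lambda_q$ is generic is irreducible, and its image $Y\subset\Psi$ is an irreducible subvariety of dimension $1+\dim\Lambda_q$ containing $C_2$ (because $\pi|_{C_2}$ is dominant, a general point of $C_2$ lies in a generic $\Lambda_q$); the hypothesis then forces $\dim\Lambda_q=1$ and $Y=S_2$. Second, to run adjunction on $F_k$ you need $\nu$ to be an isomorphism along every $C_i\subset S_2$; this holds because each such $C_i$ has $i\ne1$ (since $\nu^{-1}(S_1)$ is a section $\sigma\cong\PP^1$, which cannot dominate the genus-$10$ curve $C_1$), hence $C_i\cap S_1=\emptyset$, and $\nu$ is an isomorphism off $S_1$ since a point outside $S_1$ lies on a unique $\Lambda_q$. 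With these clarifications in place, your four-case elimination on $\mu$ is correct as you state it (for $\mu=2$ the constraint $\beta=k+2e+1$ together with base-point-freeness of $|\tilde H|$, i.e.\ $a\ge k$, indeed makes $3a-\beta<0$).
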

\begin{proof}
Without loss of generality let us assume $\dim (S_2) = 2$ and then we have $\dim (\langle C_2\rangle )\ge 4$.

First assume $\dim (\langle C_2\rangle )=4$. Since the irreducible surface $S_2$ is cut out by quadric hypersurfaces inside $\langle C_2\rangle$, we have $\deg (S_2) \le 4$.
Since $d_2\ge 11$ and $C_2$ is contained in the intersection of $S_2$ with a cubic hypersurface $U$, we get $\deg (S_2) =4$. We also get that $S_2$ is the complete intersection of two quadrics of $\langle S_2\rangle$ and so $d_2\in \{11,12\}$. If $d_2=11$, then $C_2$ is contained in a surface $T$ of degree at most $3$ by \cite[Theorem 3.15 (i)]{he} with $(d,r,m_1, \epsilon_1, \mu)=(11,4,2,2,0)$. Since $C_2$ is cut out by quadric and cubics, we get $d_2\le 3\deg (T)\le 9$, a contradiction. Now assume $d_2=12$ and so $C_2 = S_4\cap U$. Since $h^0(H,\Ii _{S_2}(2)) =2$ and $\langle C_1\rangle \cap \langle C_2\rangle$ is at least one plane, the set $H\cap \Psi$ contains a quadric hypersurface of $H$, a contradiction.

Now assume $\dim (\langle C_2\rangle)=5$ and so $d_2\ge 14$. As in (b) of the proof in Proposition \ref{prop.6.1}, we can get  $d_2\ne 14$. Since $d_2\le 3\deg (S_2)$, we also get $5 \le \deg (S_2) \le 8$. Since $S_2$ spans $\PP^5$, no reducible quadric contains $C_1\cup C_2$. Since $s\ge 2$, $C$ is not the complete intersection of $X$ with two  quadric hypersurfaces. Thus we have $h^0(\PP^5,\Ii _{S_1\cup S_2}(2)) = h^0(\PP^5,\Ii _{C_1\cup C_2}(2)) \ge 3$.

If $\deg (S_2)=8$, then we have $h^0(\PP^5,\Ii _{S_2}(2)) =3$. Since $\langle C_1\rangle \nsubseteq S_2$, so $h^0(\Ii _{C_1\cup C_2}(2)) \le 2$. It is not possible since $C$ is not connected and so not the complete intersection
of two quadrics and two cubics, which implies $h^0(\Ii _{C_1\cup C_2}(2))>2$. Similarly as in Lemma \ref{lemma6.3} we have $\deg (S_2)\ne 7$.

Assume $\deg (S_2) =5$. Since $d_2\ge 15$, we get that $C_2$ is the complete intersection of $S_2$ and a cubic hypersurface. As in the proof of Lemma \ref{lemma6.5} we have bundles as in Example \ref{b3}, but its associated curve $C$ must be connected, a contradiction.

Now assume $\deg (S_2)=6$. Since $C_2 \subseteq S_2\cap U$ with $U$ a cubic hypersurface, we have $15\le d_2\le 18$. As in Lemma \ref{lemma6.5} we exclude the case $d_2=17$. If $d_2=15$ and so $\pi _1(5,15)=16$, then $C_2$ is contained in a surfaces of degree $\le 5$ by \cite[Theorem 3.15]{he} and so it is a complete intersection of a cubic surface and a surface of degree $5$. As in the case of $\deg (S_2)=5$ above, we get a contradiction.

\quad (a) Assume $d_2=16$. Since $h^0(\Oo _{C_2}(2)) =g_2 =17$ and $h^0(\Oo _{\PP^5}(2)) =21$, we have $h^0(\PP^5,\Ii _{C_2,\PP^5}(2)) \ge 4$. Since $d_2 > 2\deg (S_2)$, every quadric hypersurface containing $C_2$ also contains $S_2$. For any irreducible and nondegenerate curve $T\subset \PP^5$, let $\Sec (T)$ denote its secant variety, i.e. the closure in $\PP^5$ of the union of all lines $L\subset \PP^5$ containing at least two points of $T$ and then we have $\dim (\Sec (T)) =3$ by \cite[Remark 1.6]{a}. Thus we have $\langle C_1\rangle \cap \Sec(C_2)\ge 1$. Let $M$ be a positive-dimensional irreducible component of $\langle C_1\rangle \cap \Sec(C_2)$. Since $C_2\nsubseteq \langle C_1\rangle$, so $\langle C_1\rangle \cap C_2$ is finite. Fix $P\in M$ with $P\notin C_2$. Since
$C_2$ is a smooth curve, $\Sec (C_2)$ is the union of all lines $L\subset \PP^5$ with $\deg (C_2\cap L) \ge 2$, not just the closure of this union. Therefore there is a line $L$ containing $P$ and with $\deg (L\cap (C_2\cup \{P\}) \ge 3$. Since $P\in \langle C_1\rangle \subset \Psi$, we get $L\subset \Psi$. Taking the closure of the union of all these lines $L$ for some $P\in M\setminus (M\cap C_2)$ we get an irreducible variety $N\subset \Psi$ containing $C_2$. Since $\deg (C_2) >9$, we get that $\dim (N) =2$ and so $\deg (N)\ge 5$. Assume for the moment $N\subsetneq S_2$. Since $C_1\cup C_2$ is cut out scheme-theoretically by cubics, we have $d_2\le 3\cdot (\deg (N)+\deg (S_2))$ (e.g. by \cite[Theorem 2.2.5]{fov}), a contradiction. Thus we have $N =S_2$, and in particular $\dim (M)=1$ and $S_2$ is ruled by lines with $M$ meeting a general line of the ruling of $S_2$ at a unique point.
Since $S_2\cap U =C_2$, we also get the existence of an integer $c$ with $2\le c\le 3$ such that $\sharp (L\cap C_2) =c$ for a general line of the ruling of $S_2$. Let $H\subset \PP^5$ be a general
hyperplane and then the scheme $S_2\cap H$ is an integral curve of degree $6$ spanning $H$. Therefore the normalization $Y$ of $S_2\cap H$ has genus $q\le 2$. Since $S_2$ is ruled by lines, there is a $\PP^1$-bundle $\pi : \PP (\Ff )\to Y$ over $Y$ with $\Ff$ a spanned vector bundle of rank $2$ on $Y$ and $\Ff  \cong \pi _{\ast }(\Oo _{\PP (\Ff )}(1))$ and a morphism $u: \PP (\Ff )\to \PP^5$ with image $S_2$, birational onto its image and with its fiber of $\pi$ mapped onto a line of the ruling of $S_2$. Since $\omega _{C_2}$ is very ample, so $C_2$ is not hyperelliptic. Since $\omega _{C_2} \cong \Oo _{C_2}(1)$ and $\Oo _{C_2}(1)$ is very ample, the canonical model of $C_2$ has no trisecant line. Therefore $C_2$ is not trigonal. Note that we have $q>0$ since $c\in \{2,3\}$. Let $C'$ denote the only irreducible curve of $\PP(\Ff )$ with $u(C') =C_2$. Note that $\Pic (\PP (\Ff ))/\sim \cong \ZZ^{\oplus 2}=\ZZ\langle h,f\rangle$ with a fiber $f$ of $u$ and a section $h$ of $ui$ with minimal self-intersection. Set $e = -h^2$ and then we have $e\ge -q$ by a theorem of C. Segre and N. Nagata in \cite[Ex. V.2.5 (d)]{h0}. Since $\deg (S_2)$ is even, so $e$ is even. Since $\Ff$ is spanned, we have $e \le 6$ with the equality if and only if $S_2$ is a cone. We have $\omega _{\PP (\Ff )}\sim -2h+(2q-2-e)f$ and $\Oo _{\PP (\Ff )}(1) \sim h+af$ for some $a\in \ZZ$. Since $\deg (S_2) =6$ and $h^2=-e$, then we have $a = 3 +(e/2)$.

\quad (a1) Assume $c=3$. We have $C_2\sim 3h+bf$ for some $b$. Since $\deg (C_2) = 16$, then $(3h+bf)\cdot (h+(3+(e/2))f) = 16$, i.e. $b = 16 +(3e)/2$. Since $\omega _{\PP (\Ff )} \sim -2h+(2q-2 -e)f$, the adjunction formula gives
\begin{align*}
32 = 2g_2-2 &= (3h +(16+3e/2)f)\cdot (h+(2q-2+16-(e/2)f)\\
& = -3e + 16+3e/2 +6q-6 +48-3e/2.
\end{align*}
Since $e$ is even and $-6 \le e \le 2$, we get a contradiction.

\quad (a2) Now assume $c=2$. Writing $C_2\sim 2x+bf$, we get $(2h+bf)\cdot (h+(3+e/2)f) = 16$, i.e. $b = 16+e$ and so $32 =2g_2+2 = (2h+(16+e)f)\cdot ((2q-2+16)f) = 32 +2q-2$. It implies $q=1$ and so $C_2$ is a bielliptic curve. Since $\Oo _{C_2}(1) = \omega _{C_2}(-1)$,
both $\Oo _{C_2}(-1)$ and $\Oo _{C_2}(1) = \omega _{C_2}(-1)$ are spanned, i.e. $\Oo _{C_2}(1)$ is a primitive line bundle in the sense of \cite{ckm}. The Clifford index $\mathrm{Cliff}(\Ll)$ of a special line bundle $\Ll$ on $C_2$ is the integer $\deg (\Ll)+2 -2h^0(\Ll)$. We have $\mathrm{Cliff}(\Oo _{C_2}(1)) = 16+2-2h^0(\Oo _{C_2}(1)) \le 12$. Since $C_2$ is bielliptic, it is not trigonal or hyperelliptic, but it has infinitely many $g^1_4$'s, i.e. line bundles $\Ll$ with $\mathrm{Cliff} (\Ll) =2$. By \cite[2.2.1, 2.2.3 or 2.3.1]{ckm} each primitive line bundle $\Ll$ on $C_2$ has either Clifford index $2$ or $g-3 = 14$. Therefore we have $\mathrm{Cliff}(\Oo _{C_2}(1)) =2$, i.e. $h^0(\Oo _{C_2}(1)) = 8$, contradicting the Castelnuovo's genus bound in $\PP^7$, since $\pi (16,7) = 12$.

\quad (b) Assume $d_2 =18$. Since $C_2\subseteq S_2\cap U$, we get that $C_2$ is the complete intersection of $S_2$ and a cubic hypersurface. Since $C_2$ is smooth and an ample Cartier divisor of $S_2$, we get that $S_2$ is smooth at each point of $C_2$ and it is has only finitely many singular points. Look at $\Sec (C_2)$ and its intersection with $\langle C_1\rangle$. We get that $S_2$ is ruled by lines and that a general line of this ruling meets $C_2$ at exactly $3$ points. Take $\PP (\Ff)$, $u$, $C'$ as in (a) with $\omega _{\PP (\Ff )}\sim -2h+(2q-2-e)f$ and $\Oo _{\PP (\Ff )}(1) \sim h+(3+(e/2))f$. Since $C_2$ is the complete intersection of $S_2$ and a cubic hypersurface, we have $C'\sim 3h+(9+(3e/2))f$ and so we have
\begin{align*}
36 = 2g_2-2 &= (3h+(9+(3e/2))f)\cdot (h+(2q+7-e/2)f)\\
&= -3e+9+(3e/2)+2q+7 -e/2 =2q+16-e.
\end{align*}
Since $e\ge -q$ and $q\le 2$, we get a contradiction.
\end{proof}

\begin{remark}
We have $\langle C_i\rangle \ne \langle C_j\rangle$ for all $i\ne j$ since $C_i= X\cap \langle C_i\rangle$.
\end{remark}

\begin{proposition}
Let $\Ee$ be a globally generated vector bundle of rank $2$ without trivial factors on $X_{3,3}$ with $c_1(\Ee)=2$ and $h^0(\Ee(-1))=0$. If its associated curve $C$ is not connected, then $\Ee$ is as in Example \ref{b1}.
\end{proposition}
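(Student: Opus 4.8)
The plan is to combine Lemma \ref{c} and the Remark preceding this proposition --- which together say that each connected component $C_i$ of $C$ is the complete intersection of $X$ with a codimension-$2$ linear subspace $\Lambda _i:=\langle C_i\rangle\cong\PP^3$ of $\PP^5$, the $\Lambda _i$ being pairwise distinct --- with an analysis of the base locus $\Psi$ that forces $s=2$ and then identifies $C$ with the curve of Example \ref{b1}.

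First I would record the incidence constraints on the $\Lambda _i$. For $i\ne j$ one has $1\le\dim(\Lambda _i\cap\Lambda _j)\le 2$, and $\dim(\Lambda _i\cap\Lambda _j)=2$ is impossible: otherwise $C_i\cap C_j=X\cap(\Lambda _i\cap\Lambda _j)\ne\emptyset$, as $X$ is the complete intersection of two hypersurfaces and hence meets every surface of $\PP^5$, contradicting that distinct connected components of the smooth curve $C$ are disjoint. So $\dim(\Lambda _i\cap\Lambda _j)=1$, and in particular $\langle\Lambda _i\cup\Lambda _j\rangle=\PP^5$. Next, since $C_i$ is the complete intersection in $\Lambda _i\cong\PP^3$ of two cubic surfaces, the Koszul resolution gives $h^0(\Lambda _i,\Ii _{C_i,\Lambda _i}(2))=0$; hence any quadric of $\PP^5$ through $C_i$ restricts to $0$ on $\Lambda _i$, \ie contains $\Lambda _i$. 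Thus $H^0(\PP^5,\Ii _{C,\PP^5}(2))=H^0(\PP^5,\Ii _{\Lambda _1\cup\cdots\cup\Lambda _s}(2))$, so $\Psi$ is the base locus of the quadrics through $\Lambda _1\cup\cdots\cup\Lambda _s$ (and recall $\Psi\cap X=C$ as schemes).

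Then I would show $s=2$. Assume $s\ge 3$ (in fact $s\le 3$ since $\deg C=9s\le 4\deg X-3=33$, but the argument works for all $s\ge 3$). Pick coordinates with $\Lambda _1=V(x_0,x_1)$ and $\Lambda _2=V(x_2,x_3)$, possible because $\langle\Lambda _1\cup\Lambda _2\rangle=\PP^5$. Then $W:=H^0(\PP^5,\Ii _{\Lambda _1\cup\Lambda _2}(2))=\langle x_0x_2,\,x_0x_3,\,x_1x_2,\,x_1x_3\rangle$ is $4$-dimensional with base locus exactly $\Lambda _1\cup\Lambda _2$, and its members realize the rational map $\phi=[\,x_0x_2:x_0x_3:x_1x_2:x_1x_3\,]$ from $\PP^5$ to $\PP^3$, whose image is the Segre quadric $\Sigma$ and whose fibres are the codimension-$2$ linear spaces $\Lambda _c$ ($c\in\Sigma$), each meeting $\Lambda _1$ and $\Lambda _2$ in a plane. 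The quadrics through $C$ form a subspace $V'\subseteq W$; computing its base locus through $\phi$ (according to whether the linear subspace $\PP((V')^{\perp})$ of $\PP^3$ is disjoint from $\Sigma$, meets it in finitely many points, or meets it in a curve) shows that this base locus, set-theoretically, is either $\Lambda _1\cup\Lambda _2$ together with at most finitely many fibres $\Lambda _c$, or it contains a hyperplane or a quadric hypersurface of $\PP^5$. The latter is excluded, since then $\Psi\cap X$ would contain a surface whereas $\Psi\cap X=C$ is a curve. In the former case $\Lambda _3\subseteq\Psi$ together with $\Lambda _3\ne\Lambda _1,\Lambda _2$ forces $\Lambda _3$ to equal one of the fibres $\Lambda _c$, whence $\dim(\Lambda _3\cap\Lambda _1)=2$, contradicting the first step. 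Hence $s=2$.

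Finally, with $s=2$ we have $C=C_1\sqcup C_2$, $C_i=X\cap\Lambda _i$, with $\Lambda _1,\Lambda _2$ codimension-$2$ linear subspaces satisfying $\dim(\Lambda _1\cap\Lambda _2)=1=5-4$ and $X\cap\Lambda _1\cap\Lambda _2=C_1\cap C_2=\emptyset$; as $C$ is smooth, this is exactly the data of Example \ref{b1} with $n=5$. So $C$ is the curve of Example \ref{b1}, and $\Ee$, being the rank-$2$ bundle with $0\to\Oo_X\to\Ee\to\Ii _C(2)\to 0$, is the vector bundle associated to $C$ there. I expect the real obstacle to be the third step: ruling out that an extra $3$-plane $\Lambda _3$ with the ``wrong'' incidence $\dim(\Lambda _3\cap\Lambda _1)=1$ can sit in the base locus of the quadrics through $\Lambda _1\cup\Lambda _2\cup\Lambda _3$, for which the explicit Segre description of $W$ and a short case split on $\dim V'$ seem unavoidable.
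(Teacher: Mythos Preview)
Your proposal is correct and, in two places, argues along genuinely different lines from the paper.

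\medskip
\noindent\textbf{Ruling out $\dim(\Lambda_i\cap\Lambda_j)=2$.} You dispose of this in one line: if $\Lambda_i\cap\Lambda_j$ is a plane then $C_i\cap C_j=X\cap(\Lambda_i\cap\Lambda_j)\ne\emptyset$, contradicting disjointness of the components. The paper instead splits cases: for $s=2$ it notes that $C$ would lie in a hyperplane, forcing $h^0(\Ee(-1))>0$; for $s=3$ it restricts to $H=\langle C_1\cup C_2\rangle$, identifies the base locus of $|\Ii_{C_1\cup C_2,H}(2)|$ as the reducible quadric $\Lambda_1\cup\Lambda_2$, and shows $C_3\cap H$ cannot land there. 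Your argument is cleaner and uniform in $s$.

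\medskip
\noindent\textbf{Ruling out $s\ge 3$.} Here the two proofs diverge substantially. The paper shows that any quadric through $\Lambda_i\cup\Lambda_j$ is a cone with vertex containing the line $L_{ij}=\Lambda_i\cap\Lambda_j$; hence every quadric through $C$ is a cone with vertex containing $L=\langle L_{12}\cup L_{13}\cup L_{23}\rangle$. If $\dim L\ge 2$ then $L\cap X\ne\emptyset$ and $C$ would be singular; so all $L_{ij}$ coincide, and projecting from this common line sends the three $\Lambda_i$ to three skew lines in $\PP^3$, forcing $h^0(\Ii_C(2))=1$, which contradicts global generation.

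You instead write $W=H^0(\PP^5,\Ii_{\Lambda_1\cup\Lambda_2}(2))=\langle x_0x_2,x_0x_3,x_1x_2,x_1x_3\rangle$ explicitly, identify the associated rational map with the projection to the Segre quadric $\Sigma\subset\PP^3$, and read off that the base locus of any subspace $V'\subseteq W$ is $\Lambda_1\cup\Lambda_2$ together with the preimage of $\PP((V')^\perp)\cap\Sigma$. Since a $4$-dimensional component of $\Psi$ would make $\Psi\cap X$ a surface, you are left with finitely many fiber-$\PP^3$'s, each meeting $\Lambda_1$ in a plane; $\Lambda_3\subseteq\Psi$ then forces $\dim(\Lambda_3\cap\Lambda_1)=2$, contradicting your first step.

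\medskip
\noindent\textbf{What each approach buys.} The paper's cone argument is coordinate-free and makes the geometric obstruction (the growing vertex locus) transparent; it also yields the sharper conclusion $h^0(\Ii_C(2))\le 1$ in the extremal case. Your Segre description is more explicit and reusable: it gives the full structure of $\Psi$ for any $V'\subseteq W$ at once, and the contradiction feeds back into the incidence constraint you already established, so the proof is self-contained once the coordinates are chosen. Either route completes the proof; your first simplification is worth keeping regardless of which method you use for $s=3$.
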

\begin{proof}
By Lemma \ref{c} we may assume $\dim (S_i)=3$ for all $i$. Without loss of generality assume $\dim (\langle C_1\rangle \cap \langle C_2\rangle ) =2$, i.e. with $H:= \langle C_1\cup C_2\rangle $ a hyperplane.  In case $s=2$ we have $H^0(\Ee (-1)) \ne 0$ and so we may assume $s=3$. The base locus of the linear system $|\Ii _{C_1\cup C_2,H}(2)|$ is the reducible quadric $\langle C_1\rangle \cup \langle C_2\rangle$. Thus $X\cap H$ is cut out by quadrics and two cubic hypersurfaces of $H$ if and only if $C_3\cap H \subset \langle C_1\rangle \cup \langle C_2\rangle$. It is not possible, because $C_3$ meets the hyperplane $H$, $C_3\cap C_1 =C_3\cap C_2 =\emptyset$ and $X\cap \langle C_i\rangle =C_i$, $i=1,2$.

Thus we have $\dim (\langle C_i\rangle \cap \langle C_j\rangle ) =1$ for all $i\ne j$, i.e. $\langle C_i\cup C_j\rangle =\PP^5$. The case $s=2$ is in Example \ref{b1}; we obtained the additional information that when $s=2$ the only bundles are the ones described in Example \ref{b1}. Now assume $s=3$. For all $i\ne j$, set $L_{ij} := \langle C_i\rangle \cap \langle C_j\rangle$ and $W_{ij}:= \langle C_i\rangle \cup \langle C_j\rangle$. The scheme $W_{ij}$ has $5$-dimensional Zariski tangent space at each point of $L_{ij}$. Therefore every quadric hypersurface containing $W_{ij}$ is a quadric cone with vertex containing $L_{ij}$. Therefore every quadric hypersurface containing $C_1\cup C_2\cup C_3$ is a quadric cone with vertex containing $L:= \langle L_{12}\cup L_{13}\cup L_{23}\rangle$. If $\dim (L) \ge 2$, then we have $L\cap X\ne \emptyset$ and so $C$ has at least one singular point, a contradiction. Thus we have $\langle C_1\rangle \cap \langle C_2\rangle \cap \langle C_3\rangle = L_{12}$. Let $V\subset \PP^5$ be a $3$-dimensional linear space with $L_{12}\cap V=\emptyset$. The scheme $V\cap (\langle C_1\rangle \cup \langle C_2\rangle \cup \langle C_3\rangle )$
is the union of $3$ disjoint lines. Since every element of $|\Ii _{C_1\cup C_2\cup C_3}(2)| = |\Ii _{\langle C_1\rangle\cup \langle C_2\rangle \cup \langle C_3\rangle}(2)|$ is a cone over the unique quadric surface containing $V\cap( \langle C_1\rangle\cup \langle C_2\rangle \cup \langle C_3\rangle )$, we get $h^0(\Ii _C(2)) =1$ and so $\Ii _C(2)$ is not globally generated.
\end{proof}


\section{Example on CICY of codimension $3$}
In this section we suggest an example of globally generated vector bundles $\Ee$ of rank $2$ on a CICY threefold $X$ of codimension $3$ with $c_1(\Ee)=2$ and $h^0(\Ee(-1))=0$, i.e. $X=X_{2,2,3}$. We do not know other examples, while our tools are not enough to say that they are the only ones. In \cite[Theorem 1.2]{Knutsen} the existence of some isolated and smooth curves is shown on the general CICY threefold.

\begin{lemma}\label{d1}
Let $S\subset \PP^6$ be a smooth surface of degree $6$ with $\omega _S \cong \Oo _S(-1)$, i.e. a Del Pezzo surface of degree $6$. Then $\Ii _{S,\PP^6}(2)$ is spanned. In other words, $S$ is scheme-theoretically cut out by quadrics.
\end{lemma}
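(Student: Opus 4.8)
The plan is to slice $S$ by a general hyperplane, reduce to the classical fact that an elliptic normal curve is scheme-theoretically cut out by quadrics, and then bootstrap the conclusion back up to the surface.

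First I would collect the cohomology of $S$ that we need. Since $\omega _S\cong \Oo _S(-1)$ the hyperplane class is the anticanonical class, so the Riemann--Roch formula on a surface gives $\chi (\Oo _S(1)) = 1+(-K_S)^2 = 1+\deg (S) = 7$, while Serre duality and the Kodaira vanishing theorem (applied to $\Oo _S(1)\cong \omega _S\otimes \Oo _S(2)$) give $h^1(\Oo _S(1)) = h^2(\Oo _S(1)) = 0$; hence $h^0(\Oo _S(1)) = 7$ and $S\subset \PP^6$ is embedded by the complete linear system $|-K_S|$, in particular it is linearly normal. From $0\to \Ii _{S,\PP^6}(1)\to \Oo _{\PP^6}(1)\to \Oo _S(1)\to 0$ and $h^1(\Oo _{\PP^6}(1))=0$ I then get $h^1(\Ii _{S,\PP^6}(1))=0$, and since $S$ is rational we also have $h^1(\Oo _S)=0$.

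Next, for a general hyperplane $H\subset \PP^6$ I set $D:= S\cap H$. By the Bertini theorem $D$ is smooth, and it is connected (an ample divisor on the connected surface $S$, using $h^1(\Oo _S(-1))=h^1(\omega _S)=0$), hence irreducible; the adjunction formula gives $\omega _D \cong (\omega _S\otimes \Oo _S(1))_{\vert _D}\cong \Oo _D$, so $D$ is an elliptic curve of degree $6$. From $0\to \Oo _S\to \Oo _S(1)\to \Oo _D(1)\to 0$ and $h^1(\Oo _S)=0$ we get $h^0(\Oo _D(1)) = 6$, so $D$ is an elliptic normal sextic in $H=\PP^5$, and by the classical structure of elliptic normal curves of degree $\ge 4$ the sheaf $\Ii _{D,H}(2)$ is globally generated. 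To transfer this to $S$, note that for general $H$ multiplication by a linear equation of $H$ yields an exact sequence $0\to \Ii _{S,\PP^6}(1)\to \Ii _{S,\PP^6}(2)\to \Ii _{D,H}(2)\to 0$ (the needed $\mathcal{T}or$-vanishing holds because the hyperplane $H$ meets the Cohen--Macaulay scheme $S$ properly), and the vanishing $h^1(\Ii _{S,\PP^6}(1))=0$ forces $H^0(\Ii _{S,\PP^6}(2))\twoheadrightarrow H^0(\Ii _{D,H}(2))$.

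Finally I would verify global generation of $\Ii _{S,\PP^6}(2)$ at an arbitrary point $p\in \PP^6$ by choosing $H$ general among the hyperplanes through $p$: the base locus of that linear system on $S$ is contained in $\{p\}$, so $D=S\cap H$ is again a smooth elliptic normal sextic. If $p\notin S$, then a quadric of $H$ through $D$ and missing $p$ lifts, by the surjection above, to a quadric of $\PP^6$ through $S$ which still misses $p$ because $p\in H$. If $p\in S$, then lifting local generators of $\Ii _{D,H}(2)_p$ over $\Oo _{H,p}$ and using that $H^0(\Ii _{S,\PP^6}(1))=0$, the surjection $\Ii _{S,\PP^6}(2)_p\twoheadrightarrow \Ii _{S,\PP^6}(2)_p/(H)=\Ii _{D,H}(2)_p$ together with Nakayama's lemma shows these lifts generate $\Ii _{S,\PP^6}(2)_p$. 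Hence $\Ii _{S,\PP^6}(2)$ is globally generated, which is exactly the statement. The step I expect to need the most care is this last one --- ensuring that the general hyperplane through a fixed point still cuts $S$ in a smooth, irreducible elliptic normal curve, and carrying out the local bookkeeping; a shortcut worth mentioning is that the smooth Del Pezzo sextic is projectively equivalent to a general hyperplane section of the Segre threefold $\PP^1\times \PP^1\times \PP^1\subset \PP^7$, so one may instead restrict the quadrics cutting out the Segre variety, at the cost of invoking the uniqueness of $S$.
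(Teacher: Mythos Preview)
Your argument is correct and follows essentially the same route as the paper: slice by a hyperplane through the given point, recognize the section as an elliptic normal sextic (hence cut out by quadrics), and use $h^1(\Ii_{S,\PP^6}(1))=0$ to lift; the paper handles your flagged ``last step'' for $p\in S$ simply by taking $H$ not containing the tangent plane $T_pS$, which makes $S\cap H$ smooth at $p$ directly. Your invocation of $H^0(\Ii_{S,\PP^6}(1))=0$ in the Nakayama step is unnecessary (surjectivity of the image onto $\Ii_{D,H}(2)_p$ already suffices), but harmless.
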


\begin{proof}
Fix a hyperplane $H\subset \PP^6$ such that $D:= S\cap H$ is a smooth curve and then we have an exact sequence
\begin{equation}\label{eqc1}
0 \to \Ii _{S,\PP^6}(1) \to \Ii _{S,\PP^6}(2) \to \Ii _{D,H}(2)\to 0.
\end{equation}
The adjuntion formula gives $\omega _D \cong \Oo _D$ and so $D$ is a linearly normal elliptic curve of $H$. Therefore $D$ is projectively normal, $h^1(H,\Ii _{D,H}(2)) =0$ and $\Ii _{D,H}(2)$ is globally generated. Since $S$ is linearly normal, we have $h^1( \Ii _{S,\PP^6}(1))=0$ and so $\Ii _{S,\PP^6}(2)$ is spanned at each point of $H$ from (\ref{eqc1}).

Let us fix a point $P\in \PP^6$ and let $H$ be a general hyperplane with $P\in H$. To show that $\Ii _{S,\PP^6}(2)$ is spanned at $P$, it is sufficient to find $H$ so that $S\cap H$ is smooth. By the Bertini theorem the curve $S\cap H$ is smooth, except at most at $P$. If $P\notin S$, then $S\cap H$ is smooth. If $P\in S$, then we can take as $H$ a hyperplane not containing the tangent plane of $S$ at $P$.
\end{proof}

\begin{proposition}
On some $X_{2,2,3}\subset \PP^6$ there exists a globally generated vector bundle $\Ee$ of rank $2$ with the Chern classes $(c_1, c_2)=(2,18)$ and $H^0(\Ee(-1))=0$.
\end{proposition}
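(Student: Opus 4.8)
The plan is to produce $\Ee$ by the Serre (Hartshorne--Serre) construction from a smooth curve $C\subset X$ of degree $18$ with $\omega_C\cong\Oo_C(2)$, exactly in the spirit of Examples~\ref{b2} and \ref{inc} but now in codimension $3$, taking for the auxiliary surface the Del Pezzo surface of degree $6$ supplied by Lemma~\ref{d1}.

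First I would build the curve. Fix a smooth Del Pezzo surface $S\subset\PP^6$ of degree $6$, i.e. the anticanonical embedding of $\PP^2$ blown up at three general points. By Lemma~\ref{d1} the sheaf $\Ii_{S,\PP^6}(2)$ is globally generated, and from $0\to\Ii_{S,\PP^6}(2)\to\Oo_{\PP^6}(2)\to\Oo_S(2)\to 0$ together with $h^0(\Oo_S(2))=h^0(\Oo_S(-2K_S))=19$ and projective normality of $S$ one gets $h^0(\Ii_{S,\PP^6}(2))=9$. Since $\Oo_S(3)=\Oo_S(-3K_S)$ is very ample and $S$ is projectively normal, a general cubic hypersurface $U_3\subset\PP^6$ may be chosen smooth and meeting $S$ in a smooth connected curve $C:=S\cap U_3\in|-3K_S|$; by adjunction $\omega_C\cong(\omega_S(3))_{\vert_{C}}\cong\Oo_C(2)$, $\deg(C)=3\deg(S)=18$, and connectedness follows from Kodaira vanishing as in Example~\ref{b3}. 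Moreover $C$ is nondegenerate: the restriction $H^0(\Oo_S(1))\to H^0(\Oo_C(1))$ is injective because $H^0(S,\Oo_S(1)-C)=H^0(S,\Oo_S(2K_S))=0$, so $h^0(\Oo_C(1))\ge 7$ and no hyperplane of $\PP^6$ contains $C$.

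Next I would exhibit the ambient threefold. Choose general $Q_1,Q_2\in|\Ii_{S,\PP^6}(2)|$ and set $X:=Q_1\cap Q_2\cap U_3$. Since $S\subset Q_1\cap Q_2$ we have $X\cap S=S\cap U_3=C$ as schemes, and for general choices $\dim(X)=3$ (a general cubic meets the $4$-fold $Q_1\cap Q_2$ properly), so $X$ is a complete intersection of type $(2,2,3)$ and hence $\omega_X\cong\Oo_X$. The essential point is that $X$ is smooth. Off $S$ the linear system $|\Ii_{S,\PP^6}(2)|$ restricted to the smooth $5$-fold $U_3$ has base locus $U_3\cap S=C$, so by Bertini $X$ is smooth on $U_3\setminus C$, which contains $X\setminus C$. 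Along $C$ one argues as follows: at $p\in C$ the differentials $d_pQ_1,d_pQ_2$ lie in the $4$-dimensional conormal space $N^{\vee}_{S/\PP^6,p}$ and, by global generation of $\Ii_{S,\PP^6}(2)$, are general elements of that space for general $Q_1,Q_2$, whereas $d_pU_3\notin N^{\vee}_{S/\PP^6,p}$ because the divisor $U_3|_S=C$ is smooth at $p$; a dimension count over the $1$-dimensional $C$ then shows that for general $Q_1,Q_2$ these three differentials are independent at every point of $C$, so $X$ is smooth along $C$ as well. Thus $X$ is a smooth CICY threefold of type $(2,2,3)$ containing $C$.

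Finally I would run the Serre construction. The sheaf $\Ii_{C,X}(2)$ is globally generated: at $p\in X\setminus C=X\setminus S$ Lemma~\ref{d1} gives a quadric through $S\supseteq C$ avoiding $p$, and at $p\in C$, since $C=S\cap X$ scheme-theoretically the ideal $\Ii_{C,X}$ is the image of $\Ii_{S,\PP^6}$ in $\Oo_X$, which near $p$ is generated by restrictions of global quadrics through $S$, so $\Ii_{C,X}(2)$ is generated near $p$ by global sections. Also $h^0(\Ii_{C,X}(1))=0$ since $C$ spans $\PP^6$. As $\omega_C\cong\Oo_C(2)=(\omega_X\otimes\Oo_X(2))_{\vert_{C}}$, $C$ is a local complete intersection in the smooth threefold $X$, and $H^1(X,\Oo_X(-2))\cong H^2(X,\Oo_X(2))^{\vee}=0$ by Kodaira vanishing, the Serre correspondence (cf. \cite{Arrondo}) yields a rank $2$ vector bundle $\Ee$ with $\det(\Ee)\cong\Oo_X(2)$ and
$$0\to\Oo_X\to\Ee\to\Ii_{C,X}(2)\to 0.$$
Since $h^1(\Oo_X)=0$ and $\Ii_{C,X}(2)$ is globally generated, $\Ee$ is globally generated; it has $c_1(\Ee)=2$ and $c_2(\Ee)=\deg(C)=18$, and twisting the displayed sequence by $\Oo_X(-1)$ and using $h^0(\Oo_X(-1))=0=h^0(\Ii_{C,X}(1))$ gives $H^0(\Ee(-1))=0$. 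The main obstacle in this argument is precisely the smoothness of $X$ along $C$ --- the curve lies in the base locus $S$ of the pencil of quadrics being moved --- which is why the conormal-space dimension count is needed; the global generation of $\Ii_{C,X}(2)$ and the cohomological hypotheses of Serre's construction are then routine consequences of Lemma~\ref{d1} and Kodaira vanishing on $X$.
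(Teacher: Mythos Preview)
Your proof is correct and follows essentially the same strategy as the paper: take a degree~$6$ Del Pezzo surface $S\subset\PP^6$ (cut out by quadrics by Lemma~\ref{d1}), intersect with a cubic to obtain the curve $C$ with $\omega_C\cong\Oo_C(2)$, realize $C$ inside an $X_{2,2,3}$ built from two quadrics through $S$ and that cubic, and then run the Hartshorne--Serre construction. The only differences are cosmetic: the paper fixes the two quadrics first and then the cubic, whereas you fix the cubic first and then the quadrics; and the paper allows $S$ to be a weak Del Pezzo with finitely many singular points, while you take $S$ smooth, which of course suffices for an existence statement. Your argument is in fact more complete on one point: the paper only asserts that $X$ is smooth outside $S$ (via Bertini) and does not spell out why $X$ is smooth along $C$, whereas your conormal-space dimension count over the $1$-dimensional $C$ makes this explicit.
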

\begin{proof}
Let $S\subset \PP^6$ be a weak del Pezzo surface of degree $6$, possibly with only finitely many singular points (see \cite{d}). It is projectively normal and cut out by quadrics by Lemma \ref{d1}. Therefore there are two quadrics $U$ and $U'$ containing $S$ such that $\dim (U\cap U') =4$ and $U\cap U'$ is smooth outside $S$. Let $W\subset \PP^6$ be a cubic hypersurface such that $C:= W\cap S$, a scheme-theoretic intersection, is a smooth curve and $X:= U\cap U' \cap W$ is smooth outside $S$, e.g. we can take as $W$ a general cubic hypersurface by the Bertini theorem. Then the adjunction formula gives $\omega _C\cong \Oo _C(2)$. Since $S$ is scheme-theoretically cut out by quadrics, so $\Ii _{C,X}(2)$ is spanned. Note that $X=X_{2,2,3}$ is a CICY of degree $12$.
\end{proof}


\bibliographystyle{amsplain}

\begin{thebibliography}{10}

\bibitem{a}
Bj{\o}rn {\AA}dlandsvik, \emph{Joins and higher secant varieties}, Math. Scand.
  \textbf{61} (1987), no.~2, 213--222. \MR{947474 (89j:14030)}

\bibitem{am}
Cristian Anghel and Nicolae Manolache, \emph{Globally generated vector bundles
  on {$\Bbb P^n$} with {$c_1=3$}}, Math. Nachr. \textbf{286} (2013), no.~14-15,
  1407--1423. \MR{3119690}

\bibitem{Arrondo}
Enrique Arrondo, \emph{A home-made {H}artshorne-{S}erre correspondence}, Rev.
  Mat. Complut. \textbf{20} (2007), no.~2, 423--443. \MR{2351117 (2008g:14084)}

\bibitem{bc}
E.~Ballico and A.~Cossidente, \emph{Surfaces in {${\bf P}^5$} which do not
  admit trisecants}, Rocky Mountain J. Math. \textbf{29} (1999), no.~1, 77--91.
  \MR{1687656 (2000f:14081)}

\bibitem{BHM}
E.~Ballico, S.~Huh, and F.~Malaspina, \emph{Globally generated vector bundles
  of rank 2 on a smooth quadric threefold}, J. Pure Appl. Algebra \textbf{218}
  (2014), no.~2, 197--207. \MR{3120621}

\bibitem{BCKS}
V.~Batyrev, I.~Ciocan-Fontanine, B.~Kim and D.~van Straten, \emph{Conifold transitions and mirror symmetry for Calabi-Yau complete intersections in Grassmannians}, Nucl. Phys. \textbf{B} \textbf{514} (1998), no.~3, 640--666.


\bibitem{ccg}
Enrico Carlini, Luca Chiantini, and Anthony~V. Geramita, \emph{Complete
  intersections on general hypersurfaces}, Michigan Math. J. \textbf{57}
  (2008), 121--136, Special volume in honor of Melvin Hochster. \MR{2492444
  (2010b:14098)}

\bibitem{ckm}
Marc Coppens, Changho Keem, and Gerriet Martens, \emph{Primitive linear series
  on curves}, Manuscripta Math. \textbf{77} (1992), no.~2-3, 237--264.
  \MR{1188583 (93j:14028)}

\bibitem{dm}
Olivier Debarre and Laurent Manivel, \emph{Sur la vari\'et\'e des espaces
  lin\'eaires contenus dans une intersection compl\`ete}, Math. Ann.
  \textbf{312} (1998), no.~3, 549--574. \MR{1654757 (99j:14048)}

\bibitem{d}
Susan~J. Diesel, \emph{Irreducibility and dimension theorems for families of
  height {$3$} {G}orenstein algebras}, Pacific J. Math. \textbf{172} (1996),
  no.~2, 365--397. \MR{1386623 (99f:13016)}

\bibitem{egh}
David Eisenbud, Mark Green, and Joe Harris, \emph{Cayley-{B}acharach theorems
  and conjectures}, Bull. Amer. Math. Soc. (N.S.) \textbf{33} (1996), no.~3,
  295--324. \MR{1376653 (97a:14059)}

\bibitem{ES}
Geir Ellingsrud and Stein~Arild Str{\o}mme, \emph{Bott's formula and
  enumerative geometry}, J. Amer. Math. Soc. \textbf{9} (1996), no.~1,
  175--193. \MR{1317230 (96j:14039)}

\bibitem{ev}
H{\'e}l{\`e}ne Esnault and Eckart Viehweg, \emph{Lectures on vanishing
  theorems}, DMV Seminar, vol.~20, Birkh\"auser Verlag, Basel, 1992.
  \MR{1193913 (94a:14017)}

\bibitem{fov}
H.~Flenner, L.~O'Carroll, and W.~Vogel, \emph{Joins and intersections},
  Springer Monographs in Mathematics, Springer-Verlag, Berlin, 1999.
  \MR{1724388 (2001b:14010)}

\bibitem{f1}
Takao Fujita, \emph{Classification of projective varieties of {$\Delta $}-genus
  one}, Proc. Japan Acad. Ser. A Math. Sci. \textbf{58} (1982), no.~3,
  113--116. \MR{664549 (83g:14003)}

\bibitem{f2}
\bysame, \emph{Classification theories of polarized varieties}, London
  Mathematical Society Lecture Note Series, vol. 155, Cambridge University
  Press, Cambridge, 1990. \MR{1162108 (93e:14009)}

\bibitem{HK}
B.~Haghighat and A.~Klemm, \emph{Topological strings on Grassmannian Calabi-Yau manifolds}, JHEP \textbf{01} (2009), no.~029.

\bibitem{he}
Joe Harris, \emph{Curves in projective space}, S\'eminaire de Math\'ematiques
  Sup\'erieures [Seminar on Higher Mathematics], vol.~85, Presses de
  l'Universit\'e de Montr\'eal, Montreal, Que., 1982, With the collaboration of
  David Eisenbud. \MR{685427 (84g:14024)}

\bibitem{h0}
Robin Hartshorne, \emph{Algebraic geometry}, Springer-Verlag, New York, 1977,
  Graduate Texts in Mathematics, No. 52. \MR{MR0463157 (57 \#3116)}

\bibitem{hs}
\bysame, \emph{Stable vector bundles of rank {$2$} on {${\bf P}^{3}$}}, Math.
  Ann. \textbf{238} (1978), no.~3, 229--280. \MR{514430 (80c:14011)}

\bibitem{ho}
Andrea Hofmann, \emph{The degree of the third secant variety of a smooth curve
  of genus $2$}, preprint, arXiv:1103.4655 [math.AG], 2011.

\bibitem{kanazawa}
Atsushi Kanazawa, \emph{Pfaffian {C}alabi-{Y}au threefolds and mirror
  symmetry}, Commun. Number Theory Phys. \textbf{6} (2012), no.~3, 661--696.
  \MR{3021322}

\bibitem{Knutsen}
Andreas~Leopold Knutsen, \emph{On isolated smooth curves of low genera in
  {C}alabi-{Y}au complete intersection threefolds}, Trans. Amer. Math. Soc.
  \textbf{364} (2012), no.~10, 5243--5264. \MR{2931328}

\bibitem{madonna}
Carlo~G. Madonna, \emph{A{CM} vector bundles on prime {F}ano threefolds and
  complete intersection {C}alabi-{Y}au threefolds}, Rev. Roumaine Math. Pures
  Appl. \textbf{47} (2002), no.~2, 211--222 (2003). \MR{1979043 (2004g:14048)}

\bibitem{man}
N.~Manolache, \emph{Globally generated vector bundles on $\mathbb{P}^3$ with
  $c_1=3$}, Preprint, arXiv:1202.5988 [math.AG], 2012.

\bibitem{sierra}
Jos{\'e}~Carlos Sierra, \emph{A degree bound for globally generated vector
  bundles}, Math. Z. \textbf{262} (2009), no.~3, 517--525. \MR{2506304
  (2010i:14076)}

\bibitem{SU}
Jos{\'e}~Carlos Sierra and Luca Ugaglia, \emph{On globally generated vector
  bundles on projective spaces}, J. Pure Appl. Algebra \textbf{213} (2009),
  no.~11, 2141--2146. \MR{2533312 (2010d:14062)}

\bibitem{thomas}
R.P.~Thomas, \emph{A holomorphic Casson invariant for Calabi-Yau $3$-folds, and bundles on $K3$ fibrations}, J. Differential Geom. \textbf{54} (2000), no.~2, 367--438. \MR{1818182 (2002b:14049)}


\end{thebibliography}
\providecommand{\bysame}{\leavevmode\hbox to3em{\hrulefill}\thinspace}
\providecommand{\MR}{\relax\ifhmode\unskip\space\fi MR }
\providecommand{\MRhref}[2]{%
  \href{http://www.ams.org/mathscinet-getitem?mr=#1}{#2}
}
\providecommand{\href}[2]{#2}

\end{document}